\DeclareMathOperator{\dist}{dist}
\DeclareMathOperator{\dive}{div}
\DeclareMathOperator{\conv}{conv}
\DeclareMathOperator{\supp}{supp}
\newcommand{\Lm}{\mathcal{L}} 
\newcommand{\N}{\mathbb{N}} 
\newcommand{\Z}{\mathbb{Z}} 
\newcommand{\R}{\mathbb{R}} 
\newcommand{\T}{\mathbb{T}} 
\newcommand{\E}{\mathbb{E}} 
\newcommand{\Prob}{\mathbb{P}} 
\newcommand{\M}{{\R^d_x}}
\newcommand{\TM}{{\R^d_x\times \R^d_v}}
\newcommand{\id}{\mathrm{id}}
\newcommand{\PM}{\mathcal{P}}
\newcommand{\weakstar}{\stackrel{*}{\rightharpoonup}}
\newcommand{\eps}{\varepsilon}
\newcommand{\rel}{\mathrm{rel}}
\newcommand{\inc}{\mathrm{inc}}
\newcommand{\dualbra}[2]{\left\langle #1,#2 \right\rangle}
\newcommand{\norm}[2][]{\left\|#2\right\|_{#1}}
\newtheorem{prop}{Proposition}[section]
\newtheorem{lemma}[prop]{Lemma}
\newtheorem{theorem}[prop]{Theorem}
\newtheorem{rem}[prop]{Remark}
\newtheorem{defi}[prop]{Definition}
\newtheorem{cor}[prop]{Corollary}
\newtheorem{example}[prop]{Example}
\title{Variational interacting particle systems and Vlasov equations}
\date{\today}
\begin{document}

\author[P. Gladbach]{Peter Gladbach}
\address{Peter Gladbach\\ Institut f\"ur Angewandte Mathematik \\ Rheinische Friedrich-Wilhelms-Universit\"at Bonn \\ Endenicher Allee 60 \\ 53115 Bonn, Germany}
\email{gladbach@iam.uni-bonn.de}

\author[B. Kepka]{Bernhard Kepka}
\address{Bernhard Kepka\\ Institut f\"ur Angewandte Mathematik \\ Rheinische Friedrich-Wilhelms-Universit\"at Bonn \\ Endenicher Allee 60 \\ 53115 Bonn, Germany}
\email{kepka@iam.uni-bonn.de}

\subjclass[2020]{35Q83, 49J45, 49Q22, 82C22}
\keywords{Interacting particle system, calculus of variation, relaxation, Vlasov equation, optimal transport}

\begin{abstract}
We consider optimization problems for interacting particle systems. We show that critical points solve a Vlasov equation, and that in general no minimizers exist despite continuity of the action functional. We prove an explicit representation of the relaxation of the action functional. We show convergence of N-particle minimizers to minimizers of the relaxed action, and finally characterize minimizers of dynamic interacting particle optimal transport problems as solutions to Hamilton-Jacobi-Bellman equations.
\end{abstract}

\maketitle

\tableofcontents

\section{Introduction}
The study of variational interacting particle systems goes back to the work of William Rowan Hamilton \cite{hamilton1834general} describing different classical interacting particles using the minimal action formulation: For example, ions with charges $q_i\in \R$ and masses $m_i>0$ moving through space will collectively minimize (at least locally) the Lagrangian action functional
\begin{equation}\label{eq: particle action}
\int_0^T \left\lbrace -\sum_{i=1}^N\sum_{j\neq i} \frac{q_iq_j}{|\gamma_i(t) - \gamma_j(t)|} + \sum_{i=1}^N \frac{m_i}{2} |\dot \gamma_i(t)|^2 \right\rbrace\, dt ,
\end{equation}
subject to fixed initial and final positions for every particle $\gamma_i(0) = x_i^0$, $\gamma_i(T) = x_i^T$.

Minimal actions similar to \eqref{eq: particle action} also appear in e.g. behavioral science \cite{beaver2021overview,bouffanais2016design}, where the particles represent eusocial animals or an organized group of people or machines, as well as in numerical optimization, so called particle swarm optimization \cite{kennedy1995particle}, where a finite number of parameter candidates move simultaneously through an energy landscape.

In all these settings, the situation becomes significantly simpler if the action depends only on the \emph{mean field}, the statistics of position-velocity pairs. In this case, the action \eqref{eq: particle action} can be rewritten as
\begin{equation}
\int_0^T \Phi(P_{t,\dot t})\,dt,
\end{equation}
where $\Phi:\PM(\TM)\to \R$, and $P_{t,\dot t} = \frac1N \sum_{i=1}^N \delta_{(\gamma_i(t),\dot \gamma_i(t))}\in \PM(\TM)$ is the particle statistics at almost every time $t\in[0,T]$.

The mean field assumption is appropriate whenever we can expect particles to be indistinguishable, e.g. when they are all helium atoms and to a lesser extent certain animals.

This assumption allows us to pass to the limit for large numbers of particles, the so-called mean field limit, by focusing only on the statistics of particles instead of individual paths. Famously, this method was used by Lanford to derive the Boltzmann equation \cite{lanford2005time}.

In this article, we treat particles as completely nonatomic, meaning we admit any probability measure on the path space $PS_T^p = W^{1,p}([0,T];\M)$, which allows for continuous splitting and merging of particles.
We shall describe the general situation where $\Phi:\PM_p(\TM) \to \R$ is a Wasserstein-continuous energy with linear growth in the $p$-moment of $P_{t,\dot t}$, with $p\in(1,\infty)$. This covers a wide range of interactions, the typical physical one being the Vlasov-type, where
\begin{equation}\label{eq: Vlasov energy}
	\Phi(f) = \int_{\TM}\int_{\R^d_{x'}\times \R^d_{v'}} \frac12U(x-x') f(dx,dv) f(dx',dv')+ \int_{\TM} \frac12 |v|^2P_{t,\dot t}(dx,dv),
\end{equation}
for a smooth, symmetric positional interaction potential $U:\M\to \R$. Here, particles interact only through their positions, while the kinetic action is purely additive without interactions.

We show in Proposition \ref{prop:ELEquation} and in Remark \ref{rem: Vlasov} that the statistics of critical points of the action are weak solutions to a version of the Vlasov equation. As a special case, we obtain
\begin{theorem}
Let $U\in C_c^\infty(\M)$ be symmetric, i.e. $ U(x)= U(-x) $. Let $\Gamma_b \in \PM_2(\M \times \M)$ be a coupling with finite second moment. Then there exists a minimizer $P\in \PM(PS_T^2)$ of $\int_0^T \Phi(P_{t,\dot t})\,dt$, with $\Phi$ as in \eqref{eq: Vlasov energy}, subject to the boundary condition $P_{0,T} = \Gamma_b$. Its statistics $P_{t,\dot t}(dx,dv)$ weakly solve the Vlasov equation
\begin{equation}
\partial_t P_{t,\dot t} + \dive_x(vP_{t,\dot t}) + \dive_v((\nabla_x U\ast P_t) P_{t,\dot t}) =0.
\end{equation}

\end{theorem}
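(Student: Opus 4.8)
The plan is to produce the minimizer by the direct method of the calculus of variations and then to read off the Vlasov equation from Proposition~\ref{prop:ELEquation} and Remark~\ref{rem: Vlasov}. First I note that the constrained problem has a finite value. On one hand $\Phi(f)\ge-\tfrac12\norm[L^\infty]{U}+\tfrac12\int_{\TM}|v|^2\,f$, so $\int_0^T\Phi(P_{t,\dot t})\,dt$ is bounded below on admissible $P$. On the other hand, the pushforward of $\Gamma_b$ under the affine interpolation $(x^0,x^T)\mapsto\bigl(t\mapsto(1-t/T)x^0+(t/T)x^T\bigr)$ is an admissible competitor whose action is finite, because $\Gamma_b$ has finite second moment and $U$ is bounded.

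\textbf{Compactness.} Let $(P^n)$ be a minimizing sequence with $P^n_{0,T}=\Gamma_b$. Combining the lower bound on $\Phi$ with $\sup_n\int_0^T\Phi(P^n_{t,\dot t})\,dt<\infty$ gives $\sup_n\int\bigl(\int_0^T|\dot\gamma|^2\,dt\bigr)P^n(d\gamma)<\infty$, and $\int|\gamma(0)|^2\,P^n(d\gamma)$ equals the fixed, finite second moment of the first marginal of $\Gamma_b$. Since the sublevel sets $\{\gamma:\ |\gamma(0)|\le R,\ \int_0^T|\dot\gamma|^2\,dt\le R\}$ are compact in $PS_T^2$ (Arzel\`a--Ascoli for the uniform topology together with weak compactness in $W^{1,2}$), Markov's inequality yields tightness of $(P^n)$, and by Prokhorov a subsequence converges narrowly, $P^n\rightharpoonup P$, with $P$ concentrated on $PS_T^2$ because $\gamma\mapsto\int_0^T|\dot\gamma|^2\,dt$ is finite $P$-a.e.

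\textbf{Lower semicontinuity, constraint, and why a minimizer exists here.} Set $A(\gamma)=\tfrac12\int_0^T|\dot\gamma|^2\,dt$ and $B(\gamma,\gamma')=\int_0^TU(\gamma(t)-\gamma'(t))\,dt$, so that
\[
\int_0^T\Phi(P_{t,\dot t})\,dt=\tfrac12\int\!\!\int B(\gamma,\gamma')\,(P\otimes P)(d\gamma\,d\gamma')+\int A(\gamma)\,P(d\gamma).
\]
The interaction term is \emph{continuous} under narrow convergence: $B$ is bounded and jointly continuous (it sees the paths only through their positions, $U\in C_c^\infty$ is uniformly continuous, and uniform convergence of paths is available on bounded $W^{1,2}$-sets), while $P^n\otimes P^n\rightharpoonup P\otimes P$. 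The kinetic term is \emph{lower semicontinuous} since $\gamma\mapsto A(\gamma)=\tfrac12\norm[L^2]{\dot\gamma}^2$ is nonnegative and lower semicontinuous on $PS_T^2$ — this is where convexity of $|v|^2$ is used — so narrow convergence gives $\liminf_n\int A\,dP^n\ge\int A\,dP$. As $\gamma\mapsto(\gamma(0),\gamma(T))$ is continuous, $P_{0,T}=\Gamma_b$, hence $P$ is admissible and $\int_0^T\Phi(P_{t,\dot t})\,dt\le\liminf_n\int_0^T\Phi(P^n_{t,\dot t})\,dt$, i.e.\ $P$ is a minimizer. The structural point, which fails for general Wasserstein-continuous $\Phi$ (where no minimizer need exist), is precisely this dichotomy: the velocity enters $\Phi$ only through $\int_{\TM}\tfrac12|v|^2\,f$, which by convexity contributes a narrowly lower semicontinuous functional, whereas the interaction couples the particles only through their positions and so is narrowly continuous.

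\textbf{The Vlasov equation.} A minimizer is in particular a critical point: pushing $P$ forward along compactly supported perturbations of the paths that vanish at $t=0$ and $t=T$ preserves the boundary constraint, so Proposition~\ref{prop:ELEquation} applies. The first variation of $\Phi$ at $f$ in a direction $\mu$ with $\mu(\TM)=0$ is $\int_{\TM}(U\ast\rho_f)\,d\mu+\tfrac12\int_{\TM}|v|^2\,d\mu$, where $\rho_f$ is the $x$-marginal of $f$ and the symmetry $U(x)=U(-x)$ has been used; since $U\in C_c^\infty$ this potential is smooth in $x$ with force $\nabla_xU\ast\rho_f$. Inserting this into Proposition~\ref{prop:ELEquation} and specializing as in Remark~\ref{rem: Vlasov} shows that $P_{t,\dot t}$ solves, in the sense of distributions on $(0,T)\times\TM$,
\[
\partial_tP_{t,\dot t}+\dive_x(vP_{t,\dot t})+\dive_v\bigl((\nabla_xU\ast P_t)\,P_{t,\dot t}\bigr)=0,
\]
with $P_t$ the $x$-marginal of $P_{t,\dot t}$. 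I expect the main obstacle to be the lower semicontinuity step, i.e.\ pinning down the topology on $\PM(PS_T^2)$ so that the velocity-dependent kinetic term stays lower semicontinuous while the interaction term stays continuous; the tightness bookkeeping and the check that the first-variation formula matches the hypotheses of Proposition~\ref{prop:ELEquation} are comparatively routine, modulo care with the almost-everywhere-in-time definition of $P_{t,\dot t}$.
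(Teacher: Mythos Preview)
Your argument is correct and in fact more elementary than the route the paper takes. The paper obtains existence by first proving the general relaxation formula (Theorem~\ref{theorem: relaxation}), then observing in the ``Newtonian particles'' example that for $\Phi$ as in~\eqref{eq: Vlasov energy} one has $\Phi=\Phi^\rel$ (because the interaction $U$ depends only on $x$ and $\tfrac12|v|^2$ is convex), and finally invoking Corollary~\ref{cor: minimizer}; the Vlasov equation then follows from Proposition~\ref{prop:ELEquation} and Remark~\ref{rem: Vlasov} exactly as you do. You bypass the relaxation machinery entirely by splitting the action into a position-only interaction term, which is narrowly continuous because $U\in C_c^\infty(\M)$ and paths converge uniformly along tight sequences, and the kinetic term $\int A\,dP$, which is narrowly lower semicontinuous because $A$ is nonnegative and weakly lower semicontinuous on $W^{1,2}$. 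This direct decomposition is precisely what makes the Vlasov energy special among the functionals treated in the paper; it would fail as soon as $U$ coupled velocities, which is where the paper's general relaxation theory earns its keep. One minor point worth tightening: the ambient space $PS_T^2$ with the weak $W^{1,2}$ topology is not metrizable, so Prokhorov should be applied on a Polish space such as $C([0,T];\M)$ (or $L^2$, as in Proposition~\ref{prop: compactness}), with $A$ extended by $+\infty$ off $W^{1,2}$; your argument goes through verbatim in that setting.
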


To our knowledge, this is the first variational characterization of solutions to the Vlasov equation, at least for smooth positive interaction potentials. 

Let us mention that on the other hand a Hamiltonian formulation of the Vlasov equation in the Wasserstein space $ (\PM_2(\TM), W_2) $ was formulated in \cite{Ambrosio2008HamiltODEs}. There, the differentiable structure of the Wasserstein space was used to prove existence of so-called Hamiltonian flows under weak regularity assumptions. The terminology ``Hamiltonian'' is justified by the fact that one can equip the Wasserstein space with a symplectic structure, see \cite{GangboPacini2011DiffFormsWasserstein}. This point of view also allowed to study the limit of the $ N $-particle system to the Vlasov–Monge–Amp\`ere system, see \cite{CullenGangboPisante2007SemigeostrophicEqDiscretized}. Furthermore, let us note that in \cite{GangboTudorascu2009EulerPoisson} the one-dimensional Euler-Poisson equation was studied using a Lagrangian formulation on the Wasserstein space.

However, this article treats not only the Lagrangian \eqref{eq: Vlasov energy}, but more general interactions between particles depending also on their velocities, e.g.

\begin{equation}\label{eq: interaction introduction}
	\Phi(f) = \int_{\TM}\int_{\R^d_{x'}\times \R^d_{v'}} \frac12U(x-x',v-v') f(dx,dv) f(dx',dv')+ \int_{\TM} \psi(x,v)P_{t,\dot t}(dx,dv),
\end{equation}

Examples include:

\begin{enumerate}
	\item Nonlocal congestion: In \eqref{eq: interaction introduction}, take
	\[
	U(x,v) = e^{-|x|^2}, \quad \psi(x,v) = \frac12 |v|^2.
	\]
	In order to minimize $\int_0^T \Phi(P_{t,\dot t})\,dt$, particles will avoid high concentrations regardless of velocity. Replacing $e^{-|x-x'|^2}$ with $|x-x'|^2$ yields the opposite effect: Particles want to be near other particles. See Figure \ref{fig:pairwiseinteracting1} for a numerical optimization.
	\item Flocking: In \eqref{eq: interaction introduction}, take 
	\[	
		U(x,v) = e^{-|x|^2}(|v|^2-1), \quad \psi(x,v) = \frac12 |v|^2.
	\] 
	In order to minimize $\int_0^T \Phi(P_{t,\dot t})\,dt$, particles moving in similar directions will flock, while particles moving in vastly different directions will avoid each other. See Figure~\ref{fig:pairwiseinteracting2} for a numerical optimization. Note that the interaction depends not only on the particles' positions but also their velocities.
\end{enumerate}

\begin{figure}[!ht]
	\centering
	\includegraphics[width=0.7\linewidth]{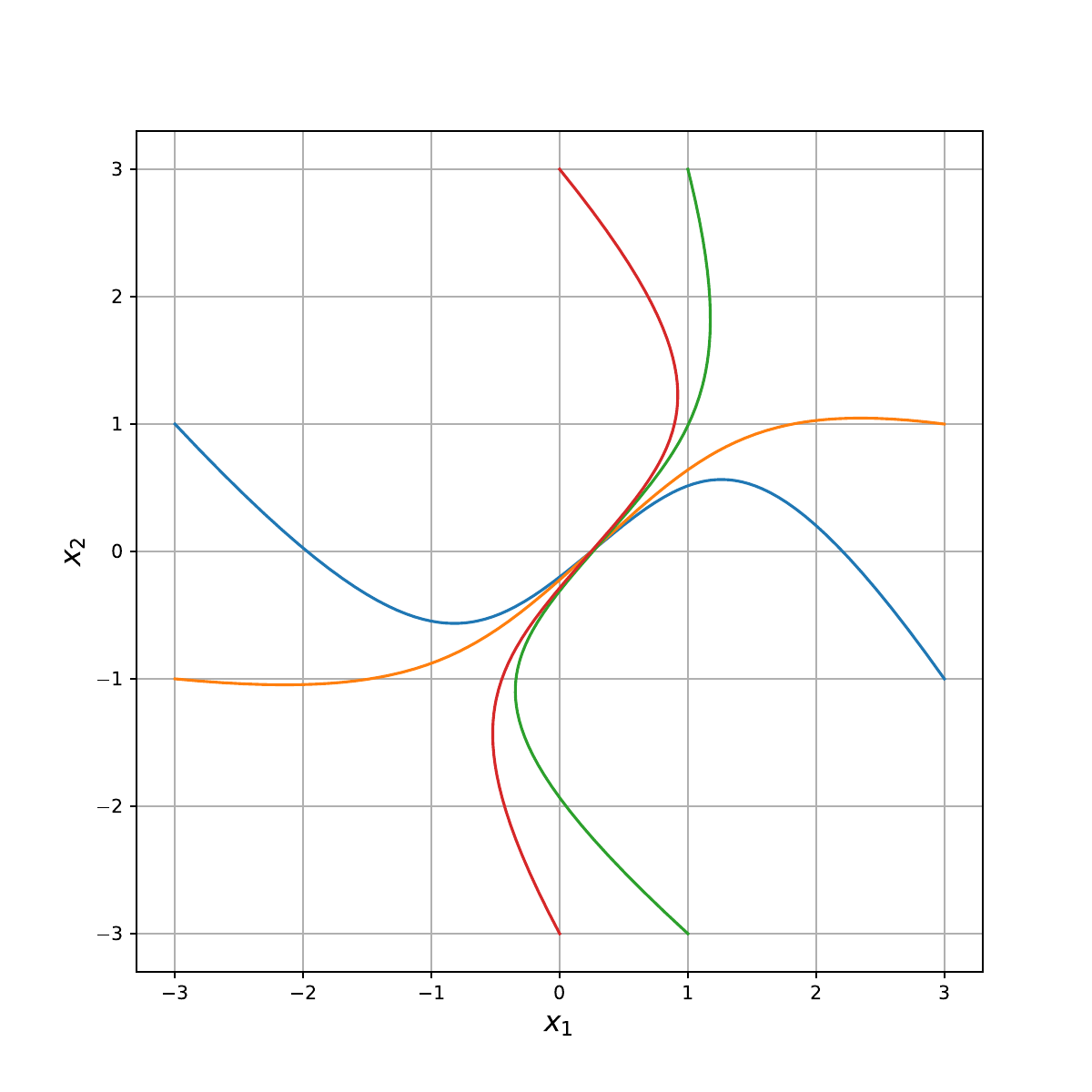}
	\caption{Optimal paths of four interacting particles in two space dimensions for $ \psi(v)= \frac{1}{2}|v|^2 $ and interacting potential $ U(x)= 50|x|^2 $. The initial and final configuration is given. Two particles move from left two right, while two particles move from bottom to top. The interaction potential leads to the effect that the particles move close to each other in order to reduce the total cost.}
	\label{fig:pairwiseinteracting1}
\end{figure}

\begin{figure}[!ht]
	\centering
	\includegraphics[width=0.7\linewidth]{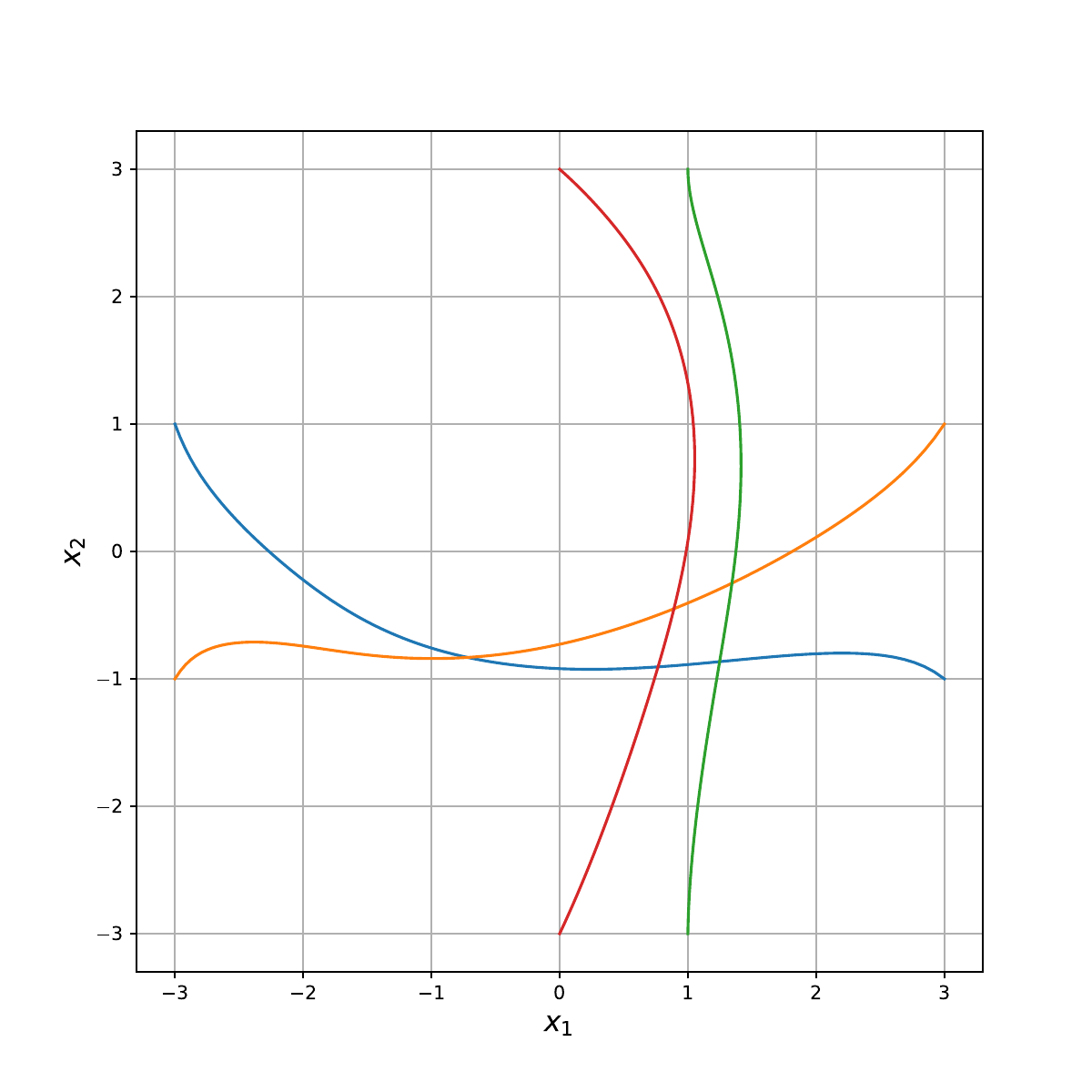}
	\caption{Optimal paths of four interacting particles in two space dimensions for $ \psi(v)= \frac{1}{2}|v|^2 $ and interacting potential $ U(x)= 50(|v|^2-10) \exp(-|x|^2) $. The initial and final configuration is prescribed. Two particles move from left two right, while two particles move from bottom to top. Due to the interaction close particles favor to align their velocities. Furthermore, the two particles moving to the right avoid the interaction with the particles moving to the top.}
	\label{fig:pairwiseinteracting2}
\end{figure}

More complex interactions are possible. We cover additional examples in Section \ref{subsec: examples}.

Existence of of minmizers of the action $\int_0^T \Phi(P_{t,\dot t})\,dt$ is not guaranteed, even for noninteracting particles, because the map $P\mapsto P_{t,\dot t}$ is not continuous. For example, the linear action
\begin{equation}
	\int_0^T \langle P_{t,\dot t}(dx,dv), |v|^4-|v|^2 + |x|^2 \rangle\,dt
\end{equation}
has no minimizer. This leads to the natural question of the relaxation of the action $\int_0^T \Phi(P_{t,\dot t})\,dt$, and sufficient and necessary conditions for lower semi-continuity. For linear actions as above, the arguments of \cite{dacorogna2014introduction} show that relaxation is achieved by computing the convex envelope of the test function in the $v$-variable. We show how to find the general relaxation formula in Theorem \ref{theorem: relaxation}, along with some interesting examples.

The relaxation formula makes use of martingale kernels in the velocity variable. Optimization over martingale kernels appears in a wide variety of applications, e.g. in mechanism design, see \cite{daskalakis2013mechanism}, and in martingale optimal transport, see \cite{beiglbock2016problem} and the references therein.

As a direct consequence of the relaxation result, we show that minimizers of the $N$-particle system converge to minimizers of the relaxed action. 
\begin{theorem}\label{theorem: N body}
Let $\Phi:\PM_p(\TM)\to \R$ satisfy assumptions (A1) and (A2) from Subsection \ref{subsec:RelaxationFormula} below. Let $\Gamma_N = \frac{1}{N}\sum_{i=1}^N\delta_{(x_0^i,x_T^i)}\in \PM(\M\times \M)$ be a sequence of couplings converging in $W_p$ to some coupling $\Gamma_b\in \PM_p(\M\times \M)$. Then any sequence of minimizers $P_N = \frac1N \sum_{i=1}^N \delta_{x^i}\in \PM(PS_T)$ to the $N$-body problem
\begin{equation}
\min \left\{ \int_0^T \Phi\left(\frac1N\delta_{(x_i(t),\dot x_i(t))}\right)\,dt\,:\,  P = \frac1N \sum_{i=1}^N \delta_{x^i}, P_{0,T} =\Gamma_N\right\}
\end{equation}
has a subsequence converging to a minimizer $P\in \PM(PS_T)$ of the relaxed continuous problem
\begin{equation}
	\min\left\{ \int_0^T \Phi^\rel(P_{t,\dot t})\,dt\,:\,P_{0,T} = \Gamma_b\right\}.
\end{equation}
\end{theorem}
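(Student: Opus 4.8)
The plan is to treat this as a $\Gamma$-convergence statement combined with equi-coercivity. The target functionals are $\mathcal{F}_N(P) = \int_0^T \Phi\big(\frac{1}{N}\sum_i \delta_{(x_i(t),\dot x_i(t))}\big)\,dt$ restricted to empirical measures with $P_{0,T} = \Gamma_N$, and the limit functional is $\mathcal{F}(P) = \int_0^T \Phi^{\rel}(P_{t,\dot t})\,dt$ restricted to $P_{0,T} = \Gamma_b$. First I would fix the topology: weak convergence of measures $P_N \weakstar P$ on $\PM(PS_T)$, where $PS_T$ carries the topology that makes the $p$-moment bounds meaningful (weak $W^{1,p}$ convergence of paths, with the $p$-moment of endpoints plus $\int_0^T |\dot x|^p$ controlled). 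The linear growth of $\Phi$ in the $p$-moment of $P_{t,\dot t}$ together with the boundary constraint $P_{0,T} = \Gamma_N \to \Gamma_b$ in $W_p$ should give a uniform bound $\sup_N \mathcal{F}_N(P_N) < \infty$ along minimizers (compare with a fixed competitor, e.g. straight-line interpolations between the prescribed endpoints, whose cost is controlled by assumption (A1)); this uniform energy bound plus the endpoint bound yields tightness of $(P_N)$ on $\PM(PS_T)$ via a moment/De la Vallée–Poussin argument, giving a weakly convergent subsequence with limit $P$ satisfying $P_{0,T} = \Gamma_b$.

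The core is then the two $\Gamma$-convergence inequalities. For the \emph{liminf} inequality, given $P_N \weakstar P$ I would note that $\mathcal{F}_N(P_N) = \int_0^T \Phi(P_{N,t,\dot t})\,dt \ge \int_0^T \Phi^{\rel}(P_{N,t,\dot t})\,dt$ since $\Phi^{\rel} \le \Phi$; then I need lower semicontinuity of $P \mapsto \int_0^T \Phi^{\rel}(P_{t,\dot t})\,dt$ along the convergence $P_N \weakstar P$. This is exactly what the relaxation result (Theorem \ref{theorem: relaxation}) is built to provide — $\Phi^{\rel}$ is by construction the largest functional below $\Phi$ that is lower semicontinuous under the relevant weak convergence of time-slices, so one invokes that characterization, using Fatou in time after extracting a further subsequence along which $P_{N,t,\dot t} \weakstar P_{t,\dot t}$ for a.e. $t$ (the map $P \mapsto (P_{t,\dot t})_{t\in[0,T]}$ and the disintegration in time need to be handled carefully, but the $p$-moment bound keeps everything tight). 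For the \emph{limsup} inequality (the recovery sequence) one must, given any admissible $P$ with $P_{0,T}=\Gamma_b$ and $\mathcal{F}(P)<\infty$, produce empirical measures $P_N$ with $P_{N,0,T}=\Gamma_N$ and $\limsup_N \mathcal{F}_N(P_N) \le \mathcal{F}(P)$. Here I would first use the explicit structure of $\Phi^{\rel}$ from Theorem \ref{theorem: relaxation} (the martingale-kernel representation in the velocity variable) to replace $P$ by a near-optimal competitor whose time-slices are genuinely attained — i.e. write $\Phi^{\rel}(P_{t,\dot t})$ as $\Phi$ evaluated on a measure obtained by a martingale perturbation in $v$, realized by rapidly oscillating the velocities of the paths — then quantize this measure into $N$ Diracs (an $N$-point approximation in $W_p$, adjusted so the endpoints match $\Gamma_N$ exactly, which is possible since $\Gamma_N \to \Gamma_b$ in $W_p$) and use Wasserstein-continuity of $\Phi$ to pass the cost.

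Finally, equi-coercivity plus the two inequalities give the standard conclusion: any limit point $P$ of minimizers $P_N$ is a minimizer of $\mathcal{F}$, and $\mathcal{F}_N(P_N) \to \min \mathcal{F}$. I expect the \textbf{main obstacle} to be the recovery sequence, specifically reconciling the two forms of approximation that $\Phi^{\rel}$ demands: the martingale/oscillation step that realizes the relaxed value by fast velocity oscillations must be performed \emph{simultaneously} with the $N$-point quantization and with the hard endpoint constraint $P_{N,0,T} = \Gamma_N$, and one must check that the oscillation scale can be chosen as a function of $N$ so that both errors (the relaxation gap and the quantization gap) vanish while the paths stay in $W^{1,p}$ with controlled $p$-moment — a diagonal-extraction argument that relies essentially on the quantitative continuity modulus of $\Phi$ and on the explicit nature of the relaxation formula. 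A secondary technical point is justifying the a.e.-in-time weak convergence of slices $P_{N,t,\dot t}$ and the application of Fatou, which requires uniform integrability in $t$ coming from the linear-growth bound.
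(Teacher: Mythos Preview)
Your overall $\Gamma$-convergence framing is correct and matches the paper's strategy: compactness from (A1) and a straight-line competitor, the liminf from the relaxation theorem, and a recovery sequence built by first realising $\Phi^{\rel}$ via oscillations, then quantising, then fixing endpoints by a diagonal argument. The paper does exactly this; for the quantisation step it minimises a semidiscrete transport problem on path space in the \emph{strong} $W^{1,p}$-Wasserstein distance, which makes the passage $F(Q^{k,N})\to F(Q^k)$ immediate from (A2) without any further oscillation analysis.

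There is, however, a genuine gap in your liminf argument. You write that you would ``extract a further subsequence along which $P_{N,t,\dot t}\weakstar P_{t,\dot t}$ for a.e.\ $t$'' and then apply Fatou. This step fails: $P^N\weakstar P$ in $\PM(PS_T^p)$ does \emph{not} imply $P^N_{t,\dot t}\weakstar P_{t,\dot t}$ for almost every $t$, and the paper explicitly flags this (see the remark before Subsection~\ref{subsec:RelaxationFormula} and Proposition~\ref{prop: compactness}). Indeed $\Phi$ itself is $W_p$-continuous by (A2), so if time-slices converged there would be nothing to relax. The relaxation is nontrivial precisely because velocity statistics oscillate in $t$ along weakly convergent sequences; $\Phi^{\rel}$ is not the lsc envelope of $\Phi$ on $\PM_p(\TM)$, but rather the integrand for which $P\mapsto\int_0^T\Phi^{\rel}(P_{t,\dot t})\,dt$ is the lsc envelope of $F$ on $\PM(PS_T^p)$. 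The paper's lower bound (Proposition~\ref{prop:LowerBound}) handles this by a blow-up argument using the \emph{couplings} $P^N_{t,t+h}$, which \emph{do} converge narrowly, and then exploits the path-space characterisation \eqref{eq: path relaxation} of $\Phi^{\rel}$. For the present theorem you should simply black-box Theorem~\ref{theorem: relaxation}: once $F^{\rel}$ is known to be the relaxation of $F$, the inequality $\liminf_N F(P^N)\ge F^{\rel}(P)$ is immediate, and your Fatou/slice-convergence discussion should be dropped.
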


Together with Proposition \ref{prop:ELEquation}, this shows that the statistics of the $N$-particle system converge to solutions of the Vlasov equation, as seen in e.g. \cite{dobrushin1979vlasov}. 

Finally, our result relates to optimal transport. While in our setting the coupling $ \Gamma_b $ is predetermined, in optimal transport the coupling is optimized, see e.g. \cite{ambrosio2013user}. In the optimal transport community, our problem is also called the who-goes-where problem. Our theory provides a natural extension of optimal transport theory for interacting particles, by first performing the relaxation for the who-goes-where-problem, and then optimizing over the coupling.

As a direct consequence, we show in Theorem \ref{theorem: optimal transport} that there is a nonlocal Eulerian characterization of the relaxation of interacting particle optimal transport problems, which uses only a single Eulerian velocity field $V:\R^d_x\to \R^d_v$. In addition, Proposition \ref{prop: HJB equation} shows that the optimal velocity field can be computed from a potential solving a Hamilton-Jacobi-Bellman equation. As a special case, if $\Phi(f) = -V(f^x)+ \langle f, \frac12 |v|^2 \rangle$, with $V:\PM(\M)\to \R$, we obtain the Hamilton-Jacobi-Bellman equation from \cite{gangbo2008hamilton}. Specifying $V(f^x) = \langle f^x,\psi \rangle + \frac{\hbar^2}{8} I(f^x)$, where $I$ is the Fisher information, yields the linear Schr\"odinger equation, see \cite{von2012optimal}.

\subsection{Organization of the article} In Section \ref{sec:SettingResultsExamples} we introduce the setting as well as notations. Furthermore, we give the main relaxation result together with the Euler-Lagrange equation. We end the section with specific examples. The Cauchy problem in the case of quadratic functionals is studied in Section \ref{sec:CauchyGenValsovEq}. Then, in Section \ref{sec:PropertiesRelaxFunctional} we state useful properties of the relaxed functional and include counterexamples for features not inherited by the relaxed functional from the unrelaxed one. In Section \ref{sec: relaxation proof} we then give the proof of the relaxation formula. The limit of the $ N $-particle problem (see Theorem \ref{theorem: N body}) is prove in Section \ref{sec:NParticleProblem}. Finally, we prove an Eulerian formulation of the interacting particle optimal transport problem in Section \ref{sec:InteractingOTP}.

\section{Setting, relaxation result and examples}\label{sec:SettingResultsExamples}
\subsection{Notation}\label{subsec:Notation}  Given a topological space $X$ we denote $\PM(X)$ the class of Borel probability measures on $X$.

We denote by $ \PM_p(\R^d) $ the set of all probability measures with finite moment of order $ p\in(1,\infty) $. We equip it with the standard $ p $-Wasserstein distance $ W_p $.

For some fixed time horizon $T>0$ we write $PS_T^p:=PS_T^p(\M):=W^{1,p}([0,T];\M)$ for the path space, where $ p\in (1,\infty) $. We equip the path space $ PS_T^p(\M) $ with the weak $W^{1,p}$-topology. We denote by $\TM$ the tangent bundle of the position space $\M$.

Given $P\in \PM(PS_T^p(\M))$ we denote by $P_t:=(\gamma \mapsto \gamma(t))_\#P\in \PM_p(\M)$ its position statistics at time $t\in [0,T]$, $P_{s,t} := (\gamma \mapsto (\gamma(s),\gamma(t)))_\#P\in \PM_p(\M\times \M)$ the joint position statistics at times $s,t\in[0,T]$. In order to obtain the statistics in space and velocity space  at time $t$ we cannot use the definition $ (\gamma \mapsto (\gamma(t),\dot \gamma(t)))_\#P\in \PM_p(\TM)$. Indeed, the paths are continuous by Sobolev embeddings; however, their derivatives are not, so that the evaluation $ t\mapsto \dot \gamma(t) $ is not well-defined. However, we can define for almost every $t\in[0,T]$ a probability measure $P_{t,\dot t}\in \PM_p(\TM)$ via 
\begin{align*}
	\int_0^T \langle P_{t,\dot t}, \varphi(t,\cdot) \rangle \,dt = \int_{PS_T^p(\M)} \int_0^T \varphi(t,\gamma(t),\dot{\gamma}(t))\, dt \, dP(\gamma)
\end{align*}
for any bounded continuous test function $ \varphi\in C^0_b([0,T]\times \TM)$, where we used the disintegration theorem \cite[Theorem 5.3.1]{ambrosio2005gradient}. We call the probability measure $P_{t,\dot t}$ the \emph{particle statistics} at time $t\in[0,T]$. We will also write $f_t := P_{t,\dot t}$ where convenient.

For a particle statistics $ f\in  \PM(\TM) $ we write $ f^x\in \PM(\M) $ for its marginal in the position variable.

In addition, we define Markov kernels in the velocity as jointly measurable maps $\pi:\TM \to \PM(\R^d_v)$. We can apply a Markov kernel to a probability measure $ f\in \PM(\TM) $ by defining $ f\pi\in \PM(\TM) $ as the measure
\begin{align*}
	f\pi(dx,dv):=  \int_{\TM}f(dx,dv')\pi(x,v',dv),
\end{align*}
which tested against a function $\varphi\in C_b^0(\TM)$ yields
\begin{align*}
	\langle f\pi, \varphi \rangle  = \int_{\TM} \int_{\R^d_{v'}} \varphi(x,v')\pi(x,v,dv')f(dx,dv).
\end{align*}

\subsection{The relaxation result}\label{subsec:RelaxationFormula} We will look for minimizers $P\in \PM(PS_T^p(\M))$ of the action functional
\begin{equation}\label{eq:Functional}
	F(P):= \int_0^T \Phi(P_{t,\dot t})\,dt,
\end{equation}
where $\Phi:\PM_p(\TM)\to \R$ is an energy functional depending only on the particle statistics at time $t$, with the following $p$-growth and $W_p$-continuity properties, with  constants $c,C>0$:
\begin{itemize}
	\item [\textbf{(A1)}] $\displaystyle{\dualbra{f}{-C+c |v|^p} \leq \Phi(f) \leq \dualbra{f}{C+C |v|^p}}$\quad for all $f\in \PM_p(\TM)$.
	\vspace*{0.2cm}
	\item [\textbf{(A2)}] $|\Phi(f) - \Phi(f')| \leq \dualbra{f+ f'}{C+C|v|^p}^{(p-1)/p} W_p(f,f')$ \quad for all $f,f'\in \PM_p(\TM)$.
\end{itemize}
In this article we are concerned with the following minimization problem
\begin{align*}
	\inf\{F(P)\,:\,P_{0,T}=\Gamma_b\},
\end{align*}
where $\Gamma_b\in \PM_p(\M\times \M)$ is a coupling with finite $p$-moment.

\begin{rem}	
Since we equip $ PS_p^T:= PS^p_T(\M):=W^{1,p}([0,T];\M) $ with the weak topology, the existence of minimizers via the direct method is not guaranteed. Indeed, take the sequence $P^n := \delta_{t\mapsto \frac1n\sin(nt)}\in \PM(PS_{2\pi}^p(\R_x))$, which has weak limit $P^n \rightharpoonup \delta_{t\mapsto 0}\in \PM(PS_{2\pi}^p(\R_x))$, for any $p\in(1,\infty)$. In contrast, $P^n_{t,\dot t} = \delta_{(\frac1n\sin(nt),\cos(nt))}\in \PM_p(\R_x \times \R_v)$ does not converge in $W_p$ to $P_{t,\dot t} = \delta_{(0,0)}$ for any $t\in[0,2\pi]$.

Taking $\Phi(f) := \psi(\langle f,v \rangle)$ for $\psi:\R\to \R$ strictly concave in $[-1,1]$, we obtain $F(P) > \limsup_{n\to\infty} F(P^n)$.
\end{rem}

As a consequence, the functional \eqref{eq:Functional} is in general not lower semi-continuous when $ \PM(PS^p_T) $ is equipped with the weak topology, see examples in Section \ref{sec:PropertiesRelaxFunctional}. The first main result of this article, Theorem \ref{theorem: relaxation}, is the identification of the relaxation of \eqref{eq:Functional}.

In order to state the relaxation formula we define martingale kernels.
\begin{defi}\label{def:MartingaleKernel}
	A Markov kernel $ \pi: \TM\to \PM_p(\R^d_v) $ is called a martingale kernel if $ \int_{\R^d_{v'}}v'\pi(x,v,dv')= v $ for all $ (x,v)\in \TM $, i.e. if the center of momentum is not shifted. We denote by $ MK_p(\TM) $ the class of martingale kernels.
\end{defi}

We can now give the relaxation formula of \eqref{eq:Functional}.
\begin{theorem}\label{theorem: relaxation}
	Let $F$ be given by \eqref{eq:Functional} with $ \Phi $ satisfying (A1), (A2). Then the relaxation of $F$ in the weak topology of $\PM(PS_T^p)$ is given by $F^\rel(P):=\int_0^T \Phi^\rel(P_{t,\dot t})\,dt$, where $\Phi^\rel:\PM_p(\TM)\to \R$ is defined by
	\begin{align}\label{eq:Relaxation}
		\Phi^\rel(f):= \inf\left\lbrace \sum_{i=1}^N \lambda_i\Phi(f\pi_i)\,:\,  \pi_i:TM\to \PM_p(\TM),\, \lambda_i\in [0,1], \right.
		\\
		\left. \sum_{i=1}^N\lambda_i=1, \, \sum_{i=1}^N\lambda_i\pi_i\in MK_p(\TM) \right\rbrace .
	\end{align}
\end{theorem}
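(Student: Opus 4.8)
The plan is to establish the two inequalities that characterize the lower semicontinuous envelope: a \emph{liminf inequality} $\liminf_n F(P^n) \ge \int_0^T \Phi^\rel(P_{t,\dot t})\,dt$ whenever $P^n \rightharpoonup P$ in $\PM(PS_T^p)$, and the existence, for every $P$, of a \emph{recovery sequence} $P^n \rightharpoonup P$ with $\limsup_n F(P^n) \le \int_0^T \Phi^\rel(P_{t,\dot t})\,dt$. Since by (A1) the sublevel sets of $F$ are $W^{1,p}$-bounded, on them the weak $W^{1,p}$-topology is metrizable, so these two facts identify $F^\rel$. Beforehand I would record the elementary properties of $\Phi^\rel$: taking $N=1$, $\pi_1=\id$ gives $\Phi^\rel \le \Phi$; composing martingale kernels and flattening nested convex combinations gives idempotency $(\Phi^\rel)^\rel=\Phi^\rel$; and a short estimate from (A2) yields $W_p$-lower semicontinuity of $\Phi^\rel$ together with the growth bounds (A1). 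These make $\int_0^T \Phi^\rel(P_{t,\dot t})\,dt$ well defined and stable under the limiting procedures below.

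\textbf{Recovery sequence.} Fix $P$ and $\eps>0$. After a density reduction (approximate $P$ weakly by measures concentrated on finitely many smooth paths, for which $t\mapsto P_{t,\dot t}$ may be taken piecewise constant in time) it suffices, on each time cell, to realize a single near-optimal datum $(\lambda_i,\pi_i)_{i=1}^N$ with $\bar\pi:=\sum_i\lambda_i\pi_i\in MK_p(\TM)$ attaining $\Phi^\rel$ up to $\eps$. The paths are built by homogenization in time: subdivide $[0,T]$ into cells of length $\sim 1/n$, split each cell into consecutive sub-cells of relative length $\lambda_i$, and on the $i$-th sub-cell superimpose on every path a fast, $O(1/n)$-amplitude oscillation whose velocity, time-averaged over that sub-cell, is distributed according to $\pi_i(\gamma(t),\dot\gamma(t),\cdot)$ (after discretizing $\pi_i$ into finitely many velocity values, a standard chattering construction). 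The martingale identity $\int v'\,\bar\pi(x,v,dv')=v$ forces the net displacement accumulated over a full cell to be negligible, so the oscillations vanish uniformly and $P^n\rightharpoonup P$; simultaneously $P^n_{t,\dot t}$ equals $P_{t,\dot t}\pi_i$ up to errors controlled by (A1)--(A2) on the $i$-th sub-cell, so $F(P^n)\to\int_0^T\sum_i\lambda_i(t)\,\Phi(P_{t,\dot t}\pi_i(t))\,dt\le\int_0^T\Phi^\rel(P_{t,\dot t})\,dt+\eps$. A diagonal argument in $(\eps,n)$ finishes this half.

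\textbf{Liminf inequality.} Let $P^n\rightharpoonup P$ with $\liminf_n F(P^n)<\infty$; by (A1), $\sup_n\int_0^T\langle P^n_{t,\dot t},|v|^p\rangle\,dt<\infty$. Passing to a subsequence, the space--time measures $dt\otimes P^n_{t,\dot t}$ converge weakly-$*$, and, retaining the time oscillations, one extracts a Young measure $t\mapsto\eta_t\in\PM(\PM_p(\TM))$ with $\int_0^T\Phi(P^n_{t,\dot t})\,dt\to\int_0^T\!\int\Phi\,d\eta_t\,dt$ (here the $W_p$-continuity and growth of $\Phi$ are used, after a velocity truncation to circumvent the lack of $W_p$-compactness of bounded-moment sets). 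Testing $P^n\rightharpoonup P$ against functionals of the path alone, and against $\gamma\mapsto\int_0^T\psi(t,\gamma(t))\,h(\dot\gamma(t))\,dt$ for $\psi\ge0$ and $h$ convex — the latter being weakly lower semicontinuous, being an integral functional with integrand convex in the velocity — pins down the constraints on $\eta_t$: its position marginal is $P_t$, and its barycenter $g_t:=\int f\,d\eta_t(f)$ satisfies $g_t\succeq_{\mathrm{cx}}P_{t,\dot t}$ in the velocity variable, conditionally on position, in the convex order. A measure-theoretic lemma — Strassen's theorem to produce a martingale kernel $\bar\pi$ with $P_{t,\dot t}\bar\pi=g_t$, followed by a disintegration argument splitting $\bar\pi$ compatibly with a discretization of $\eta_t$ — then yields $\int\Phi\,d\eta_t\ge\Phi^\rel(P_{t,\dot t})$ for a.e.\ $t$, and integrating in $t$ gives the claim.

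\textbf{Main obstacle.} The hard part is the liminf inequality, and within it the passage from the weak-$*$/Young-measure limit to the structured bound $\int\Phi\,d\eta_t\ge\Phi^\rel(P_{t,\dot t})$: since $\Phi$ is merely continuous and not convex, one cannot convexify inside the integral, and it is precisely the interplay of the velocity oscillations (encoded by the martingale / convex-order constraint) with the time-averaging that must be matched to the \emph{double} infimum in the definition of $\Phi^\rel$ — over convex weights and over kernels with martingale average. Controlling the non-compactness of sublevel sets via velocity truncation and the bounds (A1), and carrying out the measurable selection needed to glue the a.e.-in-$t$ estimates into the time integral, are the remaining technical points I expect to occupy most of the work.
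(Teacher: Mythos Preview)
Your recovery-sequence construction is essentially the same as the paper's and is fine. The genuine problem is in your liminf argument: the two constraints you extract on the Young measure $\eta_t$ --- that each $f\in\supp\eta_t$ has $x$-marginal $P_t$, and that the barycenter $g_t=\int f\,d\eta_t(f)$ dominates $P_{t,\dot t}$ in the velocity-convex order --- are \emph{not} enough to force $\int\Phi\,d\eta_t\ge\Phi^\rel(P_{t,\dot t})$, and the ``splitting of $\bar\pi$ compatibly with a discretization of $\eta_t$'' that you invoke can fail outright. Take $P_{t,\dot t}=\tfrac12(\delta_{(0,-1)}+\delta_{(0,1)})$ and the abstract Young measure $\eta_t=\tfrac12(\delta_{\delta_{(0,-1)}}+\delta_{\delta_{(0,1)}})$. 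Both of your constraints hold (the $x$-marginal is $\delta_0$; the barycenter is $g_t=P_{t,\dot t}$ itself, so $g_t\succeq P_{t,\dot t}$ trivially and $\bar\pi=\id$). But any kernels with $P_{t,\dot t}\pi_1=\delta_{(0,-1)}$ and $P_{t,\dot t}\pi_2=\delta_{(0,1)}$ are forced to satisfy $\pi_1(0,\pm1)=\delta_{-1}$, $\pi_2(0,\pm1)=\delta_{1}$, whence $\tfrac12(\pi_1+\pi_2)(0,v)=\tfrac12(\delta_{-1}+\delta_1)$ has barycenter $0\neq v$: no splitting with martingale average exists. And this is not harmless: for the functional of Example~\ref{ex:NotConv} one has $\tfrac12\Phi(\delta_{(0,-1)})+\tfrac12\Phi(\delta_{(0,1)})=0<1=\Phi^\rel(P_{t,\dot t})$, so the inequality you need is simply false for this $\eta_t$. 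Of course this particular $\eta_t$ cannot arise from a sequence of path measures converging to $P$ --- but nothing in your listed constraints rules it out.

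What is missing is a constraint that is \emph{conditional on $(x,v)\in\supp P_{t,\dot t}$}, not merely on $x$: the path structure of $P^n\rightharpoonup P$ pins down, for each limiting path $\gamma$, that the time-averaged velocity of nearby $P^n$-paths over a short interval equals $\dot\gamma$. The paper builds this in by avoiding Young measures altogether. It first proves an alternative ``vertical path'' formula $\Phi^\rel(f)=\inf\{\int_0^1\Phi(R_{s,\dot s})\,ds:R\in\PM(TPS^p),\,R_1=f\}$ (Lemma~\ref{lem:AltFormRelaxation}), then uses that the two-time couplings $P^k_{t_i,t_{i+1}}$ \emph{do} converge narrowly (unlike $P^k_{t,\dot t}$): rescaling the $P^k$-paths on each short interval $[t_i,t_{i+1}]$ to vertical paths in $\TM$ produces, for each $k$, an admissible competitor $R$ for $\Phi^\rel$ of the interpolated statistic $f_i^{N,k}$, giving $\tfrac{T}{N}\Phi^\rel(f_i^{N,k})\le\int_{t_i}^{t_{i+1}}\Phi(P^k_{t,\dot t})\,dt$ up to a vanishing error. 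Continuity of $\Phi^\rel$ then passes the left side to $\Phi^\rel(f_i^N)$, and a Riemann-sum argument finishes. If you want to repair your route, you would need to lift the Young measure to the level of paths (or, equivalently, disintegrate it over $\supp P_{t,\dot t}$ rather than over $\supp P_t$), which in effect reproduces the paper's argument.
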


\begin{rem}
	Consider a minimizing sequence of probability measures on the path space $P^n\weakstar P$. By the growth condition (A1), the position statistics $P^n_t\in \PM(\M)$ will converge narrowly to $P_t$, while  the same cannot be said for the position-velocity statistics $P^n_{t,\dot t}\in \PM(\TM)$. In fact, the $P^n_{t,\dot t}$ may exhibit temporal oscillations as well as additional noise in the velocity. This is the reason why martingale kernels appear in the relaxation. The kernels $\pi_i$ can be interpreted as velocity Young measures of these minimizing sequences.
	
	Alternative formulas for the relaxation are given in Lemma \ref{lem:AltFormRelaxation}.
\end{rem}



We postpone the proof of Theorem \ref{theorem: relaxation} to Section \ref{sec: relaxation proof}. Having successfully relaxed the action, minimizers of the relaxed functional exist by the direct method of the calculus of variation.

\begin{cor}\label{cor: minimizer}
    Let $\Gamma_b\in \PM_p(\M\times \M)$ be a coupling with finite $p$-moment. Then the problem $\min\{F^\rel(P)\,:\,P_{0,T}=\Gamma_b\}$ has a minimizer $P\in \PM(PS^p_T)$.
\end{cor}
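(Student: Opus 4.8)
The plan is to apply the direct method of the calculus of variations, using Theorem \ref{theorem: relaxation} to supply lower semicontinuity for free. First I would fix a minimizing sequence $(P^n)_{n\in\mathbb N}\subset\PM(PS^p_T)$ with $P^n_{0,T}=\Gamma_b$ and $F^\rel(P^n)\to \inf\{F^\rel(P):P_{0,T}=\Gamma_b\}$; note the infimum is finite because the competitor class is nonempty (one can join each pair in the support of $\Gamma_b$ by a straight line at constant speed, producing an admissible $P$ whose $F$-value — hence $F^\rel$-value — is finite by the upper bound in (A1) and $\Gamma_b\in\PM_p(\M\times\M)$), and it is bounded below since $F^\rel\ge -CT$ by the lower bound in (A1), which the relaxed functional inherits.

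\medskip

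Next I would establish compactness of the minimizing sequence in the weak topology of $\PM(PS^p_T)$. The key is a uniform bound on $\int_{PS^p_T}\int_0^T|\dot\gamma(t)|^p\,dt\,dP^n(\gamma)$: by the lower bound in (A1) and the definition of $P_{t,\dot t}$, one has $c\int_0^T\langle P^n_{t,\dot t},|v|^p\rangle\,dt - CT \le F^\rel(P^n)$, which is bounded along the minimizing sequence, so $\int_0^T\langle P^n_{t,\dot t},|v|^p\rangle\,dt = \int_{PS^p_T}\|\dot\gamma\|_{L^p}^p\,dP^n$ is uniformly bounded. Together with the uniform bound on $p$-moments of the endpoint marginals coming from $P^n_{0,T}=\Gamma_b$, this gives a uniform bound on $\int_{PS^p_T}\|\gamma\|_{W^{1,p}}^p\,dP^n$. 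Since bounded sets in $W^{1,p}$ are relatively compact in the weak $W^{1,p}$-topology (for $p\in(1,\infty)$, by reflexivity) and are metrizable there, and since such a uniform moment bound is a tightness condition on $\PM(PS^p_T)$ via a Prokhorov-type argument (sublevel sets of $\gamma\mapsto\|\gamma\|_{W^{1,p}}^p$ are weakly compact), the sequence $(P^n)$ is tight; by Prokhorov's theorem a subsequence converges weakly, $P^n\weakstar P$.

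\medskip

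Then I would check that the limit $P$ is admissible, i.e. $P_{0,T}=\Gamma_b$: the evaluation map $\gamma\mapsto(\gamma(0),\gamma(T))$ is continuous from $PS^p_T$ with the weak $W^{1,p}$-topology into $\M\times\M$ (weak $W^{1,p}$-convergence implies, via compact embedding into $C^0$, uniform convergence of paths, hence convergence of endpoints), so $P^n\weakstar P$ gives $P^n_{0,T}\weakstar P_{0,T}$; since $P^n_{0,T}=\Gamma_b$ for all $n$, we get $P_{0,T}=\Gamma_b$. (A minor point: one wants narrow convergence to be identification of measures, which holds since $\M\times\M$ is Polish.) Finally, by Theorem \ref{theorem: relaxation}, $F^\rel$ is the relaxation of $F$ in the weak topology of $\PM(PS^p_T)$, and relaxed functionals are by construction (sequentially) lower semicontinuous; hence $F^\rel(P)\le\liminf_{n\to\infty}F^\rel(P^n)=\inf\{F^\rel(P'):P'_{0,T}=\Gamma_b\}$, so $P$ is a minimizer.

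\medskip

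The main obstacle I expect is the compactness/tightness step: one must argue carefully that a uniform $L^p_{dP}(W^{1,p})$-moment bound yields tightness of $(P^n)$ on the space $PS^p_T$ equipped with its \emph{weak} topology, which is where the choice $p\in(1,\infty)$ (reflexivity, so that $W^{1,p}$-balls are weakly sequentially compact and metrizable on these balls) is essential; with this in hand the rest is a routine application of the direct method, the only genuine input being the already-proven Theorem \ref{theorem: relaxation}.
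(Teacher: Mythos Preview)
Your proposal is correct and follows exactly the approach the paper indicates: the paper does not give an explicit proof of the corollary but simply states that minimizers exist ``by the direct method of the calculus of variation,'' and your outline spells out precisely that argument, with the compactness step corresponding to what the paper later proves as Proposition~\ref{prop: compactness}.
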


\subsection{The convex order and Strassen's Theorem}
Related to velocity martingale kernels is the velocity-convex order of probability measures:
\begin{defi}\label{def:ConvexOrder}
	We write $ g\succeq f $ for $ f,\, g\in \PM(\TM) $ if $ \dualbra{g}{\varphi}\geq \dualbra{f}{\varphi} $ for all $\varphi\in C(\TM)$ that are convex in $v$ and satisfy $|\varphi(x,v)| \leq C(1+|x|+|v|)^p$. We call a functional $ \Phi: \PM_p(\TM)\to \R $ increasing (in the $v$-convex order) if $ \Phi(g)\geq \Phi(f) $ for all $ g\succeq f $.
\end{defi}
\begin{rem}
	By considering test functions $\varphi(x,v) = \varphi(x)$ that are constant in $v$ and their negatives, we obtain that $g^x = f^x$ if $g\succeq f$.
	
	Due to Strassen's Theorem \cite{Strasse1965ExiProbMeas}, the relation $ g\succeq f $ in Definition \ref{def:ConvexOrder} is equivalent to the existence of a martingale kernel $ \pi \in MK_p(\TM) $, according to Definition \ref{def:MartingaleKernel}, with $ f\pi=g $.
\end{rem}

\subsection{The Euler-Lagrange equation}
Before giving the proofs of the main results in the previous section we consider specific examples to illustrate these results. More precisely, we consider the Euler-Lagrange equations of functionals of the form \eqref{eq:Functional} and extract properties of critical points.

In order to derive the Euler-Lagrange equation of the functional \eqref{eq:Functional} let us first define appropriate variations. Let $ \varphi: PS_T^p\to C^\infty_c((0,T); \M) $ be a pathwise variation and define for $\eps\in \R$ the perturbation of paths $\Pi_\eps : PS_T^p\to PS_T^p, \gamma \mapsto \gamma +\eps \varphi_\gamma$. Note that the perturbation $ \Pi_\eps\gamma$ has the same endpoints as $ \gamma $. The corresponding perturbation of a measure $ P\in \PM(PS_T^p) $ is given by $ P_\eps:= (\Pi_\eps)_\# P $.

We say that $ P\in\PM(PS_T^p) $ is a \emph{critical point} of the functional $ F $ in \eqref{eq:Functional} if $ \eps=0 $ is a critical point of $ \eps\mapsto F(P_\eps) $ for all bounded measurable variations $ \varphi: PS_T^p\to C^1_c((0,T); \M) $.

For the purpose of identifying and solving the Euler-Lagrange equations, we restrict our attention to quadratic functionals of the form
\begin{align}\label{eq:ExampleFunctional}
	\Phi(f) := \dualbra{f}{\psi}+\dfrac{1}{2}\dualbra{f}{U* f},
\end{align} 
where $ \psi,\, U \in C^\infty(\TM;\R) $ and
\begin{align*}
	(U* f)(x,v) := \int_{\R^d_{x'}\times \R^d_{v'}} U(x-x',v-v') \, f(dx',dv').
\end{align*}
We assume that $ U $ is symmetric in the sense that $ U(x,v)= U(-x,-v) $.

The condition of critical points yields for $ f_t:=P_{t,\dot{t}} $,
\begin{align*}
	0 = \frac{d}{d\eps} F(P_\eps) =  \int_{PS_{T}^p}\int_0^T &\left[\nabla_x\psi(\gamma_t,\dot{\gamma}_t) + \nabla_x(U* f_t)(\gamma_t,\dot{\gamma}_t) \right] \cdot \varphi_\gamma (t)\\
	&+ \left[\nabla_v\psi(\gamma_t,\dot{\gamma}_t) + \nabla_v(U* f_t)(\gamma_t,\dot{\gamma}_t) \right] \cdot \dot{\varphi_\gamma} (t)\, dt \, P(d\gamma).
\end{align*}

Since the variation $ \varphi_\gamma $ was arbitrary, we conclude the following result after integration by parts:
\begin{prop}\label{prop:ELEquation}
	A measure $ P\in \PM(PS_T^p) $ is a critical point of the functional \eqref{eq:Functional}, with $ \Phi $ of the form \eqref{eq:ExampleFunctional}, if and only if $P$ is supported on paths $ \gamma \in PS_T^p$ satisfying the equation
	\begin{align}\label{eq:PathELEquation}
		\dfrac{d}{dt}\left[\nabla_v L[f_t](\gamma_t,\dot{\gamma}_t) \right] -\nabla_xL[f_t](\gamma_t,\dot{\gamma}_t) =0
	\end{align}
	in the sense of distributions $\mathcal{D}'((0,T);\M)$, where $ L[f](x,v):= \psi(x,v)+(U*f)(x,v) $.
\end{prop}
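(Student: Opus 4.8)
The plan is to compute the first variation of $F$ along perturbations $\Pi_\eps\gamma = \gamma + \eps\varphi_\gamma$ and then use the arbitrariness of $\varphi$ together with the fundamental lemma of the calculus of variations. First I would check that $\eps \mapsto F(P_\eps)$ is differentiable at $\eps = 0$: writing $F(P_\eps) = \int_0^T \Phi((P_\eps)_{t,\dot t})\,dt$, one has $(P_\eps)_{t,\dot t} = ((x,v)\mapsto(x+\eps\varphi_\gamma(t), v+\eps\dot\varphi_\gamma(t)))$ pushed forward appropriately, and since $\Phi$ has the quadratic form \eqref{eq:ExampleFunctional} with smooth $\psi, U$, one can differentiate under the integral sign. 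The key point is that the bilinear term $\tfrac12\langle f, U\ast f\rangle$ has derivative $\langle \dot f, U\ast f\rangle$ by symmetry of $U$, which is exactly why $L[f]=\psi + U\ast f$ (rather than $\psi + \tfrac12 U\ast f$) appears as the effective Lagrangian. This produces the displayed identity
\begin{align*}
0 = \int_{PS_T^p}\int_0^T \Big[ \nabla_x L[f_t](\gamma_t,\dot\gamma_t)\cdot\varphi_\gamma(t) + \nabla_v L[f_t](\gamma_t,\dot\gamma_t)\cdot\dot\varphi_\gamma(t)\Big]\,dt\,P(d\gamma).
\end{align*}

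Next I would extract the pointwise (in $\gamma$) Euler-Lagrange equation. The subtlety is that the perturbation $\varphi$ is a \emph{measurable map} $PS_T^p \to C_c^\infty((0,T);\M)$, so the above is one scalar equation involving an integral over all paths, not obviously an equation per path. To localize, I would choose variations of the form $\varphi_\gamma(t) = \chi(\gamma)\,\eta(t)$ for a fixed test function $\eta \in C_c^\infty((0,T);\M)$ and an arbitrary bounded measurable $\chi: PS_T^p\to\R$; then the identity reads $\int_{PS_T^p}\chi(\gamma)\, G_\eta(\gamma)\,P(d\gamma) = 0$ for all such $\chi$, where $G_\eta(\gamma) := \int_0^T[\nabla_xL[f_t](\gamma_t,\dot\gamma_t)\cdot\eta(t) + \nabla_vL[f_t](\gamma_t,\dot\gamma_t)\cdot\dot\eta(t)]\,dt$. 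Hence $G_\eta(\gamma) = 0$ for $P$-a.e.\ $\gamma$, for each fixed $\eta$. Taking a countable dense family of $\eta$'s (in a suitable topology, e.g.\ $C^1_c$) and using continuity of $\eta \mapsto G_\eta(\gamma)$ for fixed $\gamma$, one concludes that for $P$-a.e.\ $\gamma$ the identity $G_\eta(\gamma)=0$ holds \emph{simultaneously} for all $\eta \in C_c^\infty((0,T);\M)$. Integrating by parts the $\dot\eta$ term, this is precisely the statement that $\frac{d}{dt}\nabla_vL[f_t](\gamma_t,\dot\gamma_t) - \nabla_xL[f_t](\gamma_t,\dot\gamma_t) = 0$ in $\D'((0,T);\M)$. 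The converse implication is immediate: if $P$ is supported on such paths, reversing the integration by parts gives $\tfrac{d}{d\eps}F(P_\eps)|_{\eps=0}=0$ for every variation.

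One technical point deserving care, which I would address up front, is the regularity needed to make sense of $t\mapsto \nabla_v L[f_t](\gamma_t,\dot\gamma_t)$ and of the distributional time derivative: since $\gamma \in W^{1,p}$, the map $t\mapsto \dot\gamma_t$ is only $L^p$, but $\nabla_v L[f_t](\gamma_t,\dot\gamma_t) = \nabla_v\psi(\gamma_t,\dot\gamma_t) + (\nabla_v U\ast f_t)(\gamma_t,\dot\gamma_t)$ is a measurable function of $t$ bounded by the smooth (hence locally bounded) data composed with $(\gamma_t,\dot\gamma_t)$; testing against $\dot\eta\in C_c^\infty$ is then well defined, and the distributional equation is interpreted accordingly. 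The main obstacle is really the measurable-selection/localization step — passing from a single averaged identity to a pathwise equation valid $P$-a.e.\ for \emph{all} test functions at once — which I expect to handle via the countable-dense-family argument sketched above; everything else is a routine (if slightly lengthy) differentiation under the integral and integration by parts.
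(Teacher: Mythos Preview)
Your proposal is correct and follows the same approach as the paper: compute the first variation of $F$ along the perturbations $\Pi_\eps$, use the arbitrariness of $\varphi_\gamma$, and integrate by parts. The paper's argument is in fact much terser than yours---it simply displays the derivative $\frac{d}{d\eps}F(P_\eps)$ and then states ``since the variation $\varphi_\gamma$ was arbitrary, we conclude the following result after integration by parts''---whereas you carefully spell out the localization step via product variations $\varphi_\gamma(t)=\chi(\gamma)\eta(t)$ and the countable-dense-family argument to pass from a $P$-averaged identity to a pathwise equation valid for all test functions simultaneously; these details, along with your remark on the symmetry of $U$ accounting for the missing factor $\tfrac12$, are genuine refinements that the paper leaves implicit.
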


Solutions to the above are referred to as Wardrop equilibria in the literature, see e.g. \cite{carlier2008optimal} for an application in congested optimal transport or \cite{benamou2017variational} for an application in mean-field games. We note that a minimizer of the toal action is achieved by each particle minimizing the marginal cost, which is different from each particle minimizing its own contribution to the total cost.

\begin{rem}\label{rem: Vlasov}
	Let us give several remarks on the Euler-Lagrange equations.
	\begin{enumerate}
		\item Every minimizer of $F(P)$ under a given coupling $\Gamma_b\in \PM_p(\M\times \M)$ is a critical point. If $F=F^\rel$, Corollary \ref{cor: minimizer} then ensures that a critical point exists.
		
		\item If $P\in \PM(PS_T^p)$ is a critical point of $F$, the family $ f_t:=P_{t,\dot{t}} $, $ t\in[0,T] $ of statistics solves in the sense of distributions a generalized Vlasov equation 
		\begin{align}\label{eq:GenVlasovEq}
			\begin{cases}
				\partial_t f_t + \dive_x(v f_t) + \dive_v(A[f_t] f_t) = 0,
				\\
				f\mid_{t=0} = f_0
			\end{cases}
		\end{align}
		with some initial condition $ f_0 $. Let us mention that finding the acceleration field $ A[f_t]:\TM \to \R^d_a $ is not a straightforward operation. In fact, by the chain rule
		\begin{align*}
			&\frac{d}{dt}[\nabla_v L[f_t](\gamma_t,\dot \gamma_t)]\\
			= & \nabla_v\nabla_v L[f_t]\ddot{\gamma}_t + \nabla_x \nabla_v L[f_t] \dot \gamma_t + \nabla_v U * \partial_t f_t.
		\end{align*}

		We observe that $\partial_t f_t$ appears in the equation.

		Assuming $\nabla_v\nabla_v L[f]>0$, we solve for the acceleration $A[f_t](\gamma_t,\dot \gamma_t) = \ddot{\gamma_t}$, yielding the implicit equation
		\begin{align*}
			&\nabla_v\nabla_v L[f_t] A[f_t]\\
			 = & \nabla_x L[f_t] - \nabla_x\nabla_vL[f_t]v - \nabla_v U *\partial_t f_t\\
			= & \nabla_x L[f_t] - \nabla_x\nabla_vL[f_t]v + \nabla_v U *(\dive_x (v f_t)+ \dive_v(A[f_t]f_t))\\
			= & \nabla_x L[f_t] - \nabla_x\nabla_vL[f_t]v + \nabla_x \nabla_v U *(v f_t)+ \nabla_v\nabla_v U * (A[f_t]f_t),
		\end{align*}
		which after separating the terms that contain $A[f_t]$ from those that do not yields the implicit characterization of the acceleration $A[f]:\R^d_x\times \R^d_v\to\R^d_v$,
		\begin{equation}\label{eq: acceleration}
\nabla_v\nabla_v L[f] A[f] - \nabla_v\nabla_v U * (A[f]f) =  \nabla_x L[f] - \nabla_x\nabla_vL[f]v + \nabla_x \nabla_v U *(v f).
		\end{equation}
		
		In Section \ref{sec:CauchyGenValsovEq}, we show that under a convexity assumption on $\psi,\, U$ we can uniquely solve the above equation for $A[f]$ and solve the Cauchy problem for the resulting Vlasov equation \eqref{eq:GenVlasovEq} using the approach of Dobrushin \cite{dobrushin1979vlasov}.
	
		\item Finally, let us note that equations \eqref{eq:PathELEquation} are equivalent to criticality of $ P $. However, not every $P$ for which $f_t:=P_{t,\dot t}$ solves the Vlasov equation \eqref{eq:GenVlasovEq} is a critical point of $F$.
		
		For an example, take $\psi(x,v) = |v|^2-|x|^2$ for $x\in[-1,1]$ and $U=0$. Take $P:= \frac12(\delta_{t\mapsto |\sin t|} + \delta_{t\mapsto -|\sin t|})$. One easily checks that $A[f](x,v) = -x$, that $f_t$ solves \eqref{eq:GenVlasovEq} but no curve in $\supp P$ solves \eqref{eq:PathELEquation} distributionally.
	\end{enumerate}
\end{rem}

\subsection{Specific examples}\label{subsec: examples}
In this subsection, we present some examples of energy functionals $\Phi:\PM_p(\TM)\to \R$ and their respective relaxations and dynamics.

\begin{example}[Noninteracting particles]
If $\Phi(f) = \langle f, \psi \rangle$ for some $\psi\in C^1(\TM)$, we can rewrite $F(P) = E_P[\int_0^T \psi(\gamma_t,\dot \gamma_t)\,dt]$. It is clear that (A1),(A2) correspond to the following conditions on $\psi$:
\begin{align*}
	-C+c|v|^p &\leq \psi(x,v) \leq C+C|v|^p
	\\
	|\nabla \psi(x,v)| &\leq C+C|v|^{p-1}.
\end{align*}

The relaxation of $\Psi$ is $\Psi^\rel(f) = \langle f, \psi^{**} \rangle$, where $\psi^{**}$ is the $v$-convex envelope of $\psi$. Formula \eqref{eq:Relaxation} agrees with this by Strassen's theorem, see e.g. \cite{Strasse1965ExiProbMeas}.

The Euler-Lagrange equation is the classical one with acceleration
\[
A(x,v) = (\nabla_v\nabla_v\psi)^{-1}(-\nabla_x\nabla_v\psi v + \nabla_x\psi),
\]
 the Vlasov equation \eqref{eq:GenVlasovEq} is linear since $A$ does not depend on $f$. 
 \end{example}

\begin{example}[Newtonian particles]
	If 
	\[
	\Phi(f) :=  \langle f, \psi \rangle + \frac12\langle f, U*f\rangle,
	\]
	with $U\in C^1_b(\M)$ depending only on $x$ and $\psi\in C(\TM)$, we find that (A1), (A2) are satisfied in exactly the same case as for noninteracting particles. The relaxation is given by $\Phi^\rel(f) = \dualbra{f}{\psi^{**}} + \frac12\langle f,U*f\rangle$.
	
	If $\psi(x,v) = \frac12 |v|^2$, the acceleration is simply $A[f](x,v) =  \nabla_x U *f$ which is independent of $v$, yielding the standard Vlasov equation 
	\begin{equation}
		\partial_t f_t + \dive_x(vf_t) + \dive_v((\nabla_x U*f_t) f_t) = 0.
	\end{equation}

	In other words, solutions to the standard Vlasov equation are minimizers to the action functional $F$ as above.
	
	In Figure \ref{fig:pairwiseinteracting1} we plot the optimal paths of four particles with given initial and final positions in the case $ \psi(v)= \frac{1}{2}|v|^2 $ and $ U(x)= 50|x|^2 $.

  \end{example}
  
  \begin{example}[Pairwise interacting particles]\label{example: pairwise}
	If
	\[
		\Phi(f) =  \langle f, \psi \rangle + \frac12\langle f, U*f\rangle,
		\]
		and both $\psi,\, U\in C(\R^d_x \times \R^d_v)$ do depend on $v$, the situation is significantly more complex. We can find sufficient conditions for (A1) and (A2) to hold:

		Define the pairwise interaction potential $\psi_2:\R^d_x \times \R^d_v \times \R^d_{x'} \times \R^d_{v'}\to \R$ as
		\begin{equation}\label{eq: pairwise potential}
			\psi_2(x,v,x',v') := \frac12 \psi(x,v) + \frac12\psi(x',v') + \frac12 U(x-x',v-v').		
		\end{equation}
		Then we can formulate growth and continuity conditions, where again $c,C>0$ are constants:
		\begin{itemize}
			\item [\textbf{(B1)}]$\displaystyle{-C + c(|v|^p + |v'|^p) \leq \psi_2(x,v,x',v') \leq C + C(|v|^p + |v'|^p)}$ \quad for all $x,v,x',v'$.
			\vspace*{0.2cm}
			\item [\textbf{(B2)}]$\displaystyle{
			|\nabla \psi_2(x,v,x',v')| \leq C + C|v|^{p-1} + |v'|^{p-1}	}$ \quad for all $x,v,x',v'$.
		\end{itemize}
		Then since $\Phi(f) = \langle f \otimes f, \psi_2 \rangle$, we have that (B1) implies (A1) and (B2) implies (A2). If $\psi_2$ is convex in $(v,v')$, then $\Phi = \Phi^\rel$. In fact, if $\pi_i$ are Markov kernels and $\sum_{i=1}^N \lambda_i \pi \in MK_p(\TM)$ as in \eqref{eq:Relaxation}, then
		\begin{align*}
			\sum_{i=1}^N \lambda_i \langle f\pi_i \otimes f\pi_i, \psi_2 \rangle = \left\langle (f\otimes f)\left(\sum_{i=1}^N \lambda_i\pi_i \otimes \pi_i\right),\psi_2 \right\rangle
			\geq \langle f\otimes f, \psi_2 \rangle,
		\end{align*}
		since $\sum_{i=1}^N \lambda_i\pi_i \otimes \pi_i:\R^d_x \times \R^d_v \times \R^d_{x'} \times \R^d_{v'} \to \PM(\R^d_v \times \R^d_{v'})$ is a martingale kernel and $\psi_2$ is convex in $(v,v')$.
\begin{itemize}
\item Convexity of $\psi_2$ in $ (v,v') $ implies relaxedness of $\Phi$. Whether the converse holds is unclear to us.
\item Convexity of $\psi_2$ in $ (v,v') $ implies convexity of $\psi$ by taking $v=v'$. If $U$ and $\psi$ are both convex in $ v $, then so is $\psi_2$.
\item Taking $\psi(x,v) = |v|^2$ and $U(x,v) = \alpha|v|^2$, we see that $\psi_2$ is convex and (B1),(B2) are satisfied for $p=2$ whenever $\alpha > -1$.
\item For $\alpha=-1$ we have $\Phi(f) = \langle f \otimes f, v\cdot v' \rangle = \langle f,v\rangle^2$, which no longer satisfies (A1).
\item In general, the relaxation of $\Phi(f) = \langle f\otimes f, \psi_2 \rangle$ ceases to be a quadratic form. Take e.g. $\psi(x,v) = \dist(v,\{\pm1\})$ the two-well potential, $U(x,v) = \alpha \dist(v,\{0,\pm 4\})$. One can show that for $ \alpha $ sufficiently large and $ p\in [0,\varepsilon]\cup[1-\varepsilon,1] $, $ \varepsilon>0 $ sufficiently small, that
\[
\Phi^\rel((1-p)\delta_{(0,-1)} + p \delta_{(0,1)}) = \min(p,1-p).
\]
This shows that $\Phi^\rel$ cannot be a quadratic functional of the type $\Phi^\rel(f) = \langle f\otimes f, \psi_2 \rangle$. To prove the above formula one first shows that for $ p $ sufficiently small no mass in $ (0,-1) $ is spread by a martingale kernel in the relaxation. Then, one can show that the best choice is a martingale kernel distributing the mass in $ (0,1) $ to the points $ (0,-1) $ and $ (0,3) $ with equal probability. This is due to the shape of the interaction potential $ U $ and allows to cancel the interaction of the particles in $ (0,-1) $ and $ (0,3) $, see Figure \ref{fig:Example2.12}. The case $ p\in [1-\varepsilon,1] $ is symmetric to the situation $ p\in [0,\varepsilon] $.
\end{itemize}
\begin{figure}[!ht]
	\centering
	\includegraphics[width=0.7\linewidth]{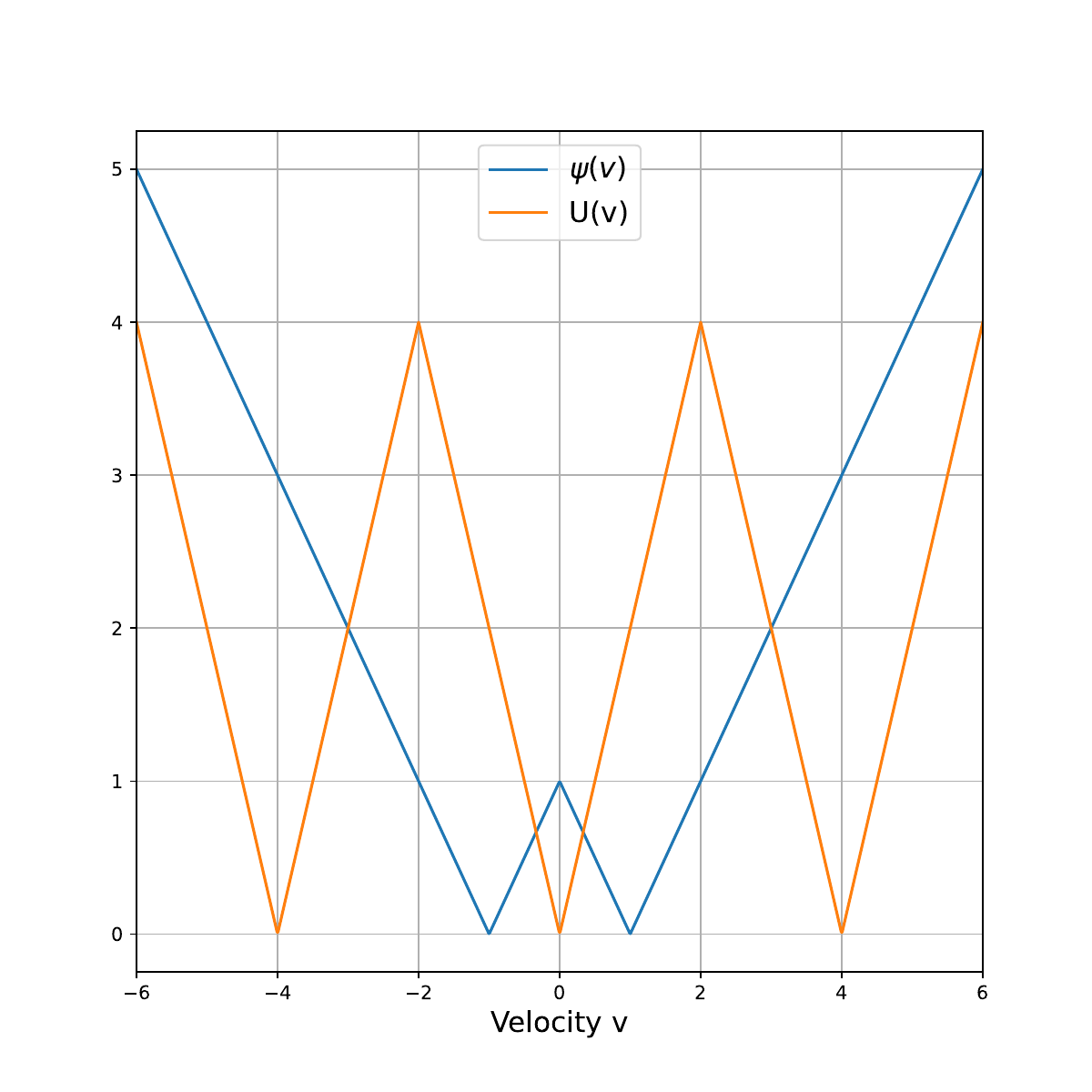}
	\caption[]{Plot of the potential $\psi(v) = \dist(v,\{\pm1\})$ and interaction potential $ U(v)=\alpha \dist(v,\{0,\pm 4\}) $, $ \alpha=2 $, in Example \ref{example: pairwise}.}
	\label{fig:Example2.12}
\end{figure}
Finally, in Figure \ref{fig:pairwiseinteracting2} we plot the optimal paths of four particles interacting via potentials $ \psi(v)= \frac{1}{2}|v|^2 $ and $ U(x,v) =50(|v|^2-10) \exp(-|x|^2) $ when the initial and final positions are prescribed.

\end{example}
  
  \begin{example}[Entropy regularization]
   For 
   \begin{equation}
  \Phi(f):= \int_{\R^n} \log(f_v)f_v(dv) + \dualbra{f}{\dfrac{1}{2}(|x|^2+|v|^2)},
   \end{equation}
   it turns out that $F$ is not lower semicontinuous. In fact, no minimizer exists for the boundary coupling $\Gamma_b:= \delta_{(0,0)}\in \PM(\M\times \M)$. The measure $P:=\delta_{t\mapsto 0}\in \PM(PS_T^2)$ clearly has infinite action. On the other hand, we have
   \begin{equation}
	  \Phi(f) \geq \Phi\left(\delta_0\otimes \frac1Ze^{-|v|^2/2}\right),
   \end{equation}
   with equality if and only if $f=\delta_0 \otimes  \frac1Ze^{-|v|^2/2}$. However, there is no $P\in \PM(PS_T^2)$ with $P_{t,\dot t} = \delta_0 \otimes  \frac1Ze^{-|v|^2/2}$ for almost every time $t$. However, we find that
   \begin{equation}
	  \Phi^\rel(\delta_{(0,0)}) = \Phi\left(\delta_0\otimes \frac1Ze^{-|v|^2/2}\right),
   \end{equation}
   since $\pi(0,0,dv) :=  \frac1Ze^{-|v|^2/2}\,dv$ is a martingale kernel and $ f\mapsto \Phi(f) $ is convex. Thus the relaxed action has minimizer $P = \delta_{t\mapsto 0}$.
  \end{example}



\section{Solving the Cauchy problem}\label{sec:CauchyGenValsovEq}
In this subsection, we limit ourselves to the specific situation of Example \ref{example: pairwise}, where
\begin{equation}
	\Phi(f)  = \langle f, \psi \rangle + \frac12 \langle f, U\ast f \rangle = \langle f\otimes f, \psi_2 \rangle.
\end{equation}
Throughout this section we assume that $ U $ is symmetric, i.e. $ U(x,v)= U(-x,-v) $. Furthermore, we assume for simplicity $p=2$ and the following strict convexity and regularity, for constants $c,C>0$:
\begin{itemize}
	\item [\textbf{(C1)}]$\displaystyle{
	\nabla_v \nabla_v \psi_2(x,v,x',v') \geq c  \,\id_{\R^d_v}
	}$ \quad for all $x,v,x',v'$.
	\vspace*{0.2cm}
	\item [\textbf{(C2)}]$\displaystyle{
	|\nabla^3 \psi_2(x,v,x',v')|+|\nabla^2 \psi_2(x,v,x',v')| + |\nabla \psi_2(x,0,x',0)|+ |\psi_2(x,0,x',0)| \leq C
	}$ 
	\\
	for all $x,v,x',v'$.
\end{itemize}

Note first that (C1) and (C2) imply (B1) and thus (A1), using the Taylor expansion
\begin{align*}
\psi_2(x,v,x',v') \geq & \psi_2(x,0,x',0) + \nabla_{(v,v')}\psi_2(x,0,x',0)\cdot (v,v') +  \frac{c}{2}|v|^2 + \frac{c}{2}|v'|^2\\
\geq & -C -C(|v|+|v'|) +  \frac{c}{2}|v|^2 + \frac{c}{2}|v'|^2 \\
\geq  &- C-C^2/2 +  \frac{c}{4}|v|^2 + \frac{c}{4}|v'|^2 
\end{align*}
by Young's inequality, and likewise for the upper bound. Similarly (C2) implies (B2) and thus (A2), since
\begin{align*}
|\nabla \psi_2(x,v,x',v')| \leq |\nabla \psi_2(x,0,x',0)| + C\|\nabla^2\psi_2\|_\infty(|v|+|v|') \leq C + C(|v|+|v'|).
\end{align*}
We note that convexity of $\psi_2$ in $(v,v')$, which implies $\Phi^\rel = \Phi$, is a stronger condition than separate convexity of $\psi_2$, i.e. $\nabla_v\nabla_v \psi_2 \geq 0$, which is the non-strict version of (C1).

We now state well-posedness of the Cauchy problem for the generalized Vlasov equation \eqref{eq:GenVlasovEq}.
	\begin{theorem}\label{thm:CauchyProblem}
		Assume that $\Phi(f) = \langle f, \psi \rangle + \frac12 \langle f, U \ast f \rangle = \langle f\otimes f, \psi_2 \rangle$ as in example \ref{example: pairwise}, with $\psi_2$ satisfying (C1), (C2). Define the Lagrangian $L[f]:=\psi + U\ast f$.
		\begin{itemize}
		\item [(i)]		Let $ f\in \PM_2(\R^{2d})$. Then there is a unique probability measure $ P\in \PM(PS_\infty^2)$ with statistics $f_t:=P_{t,\dot t}\in \PM_2(\TM)$ satisfying
		\begin{align*}
			\begin{cases}
				f_0 = f&\\
				\frac{d}{dt}\left[\nabla_v L[f_t](\gamma_t,\dot \gamma_t) \right] - \nabla_x L[f_t](\gamma_t,\dot \gamma_t) = 0&\text{ for almost every }\gamma\in \supp P \subseteq PS_\infty^2.
			\end{cases}
		\end{align*}
		\item [(ii)]
		The statistics $f_t$ are distributional solutions in $C([0,\infty);\PM_2(\TM))$ of the generalized Vlasov equation
		\begin{align*}
			\begin{cases}
				f_0 = f\\
				\partial_t f_t + \dive_x(vf_t) + \dive_v(A[f_t]f_t) = 0\\
			\nabla_v\nabla_v L[f_t] A[f_t] - \nabla_v\nabla_v U * (A[f_t]f_t) = \nabla_x L[f_t] - \nabla_x\nabla_vL[f_t]v + \nabla_x \nabla_v U *(v f_t).
			\end{cases}
		\end{align*}

		\item [(iii)] Let $ f,\, \tilde{f}\in \PM_2(\TM) $, and let $P,\tilde P \in \PM(PS_\infty^2)$ be the respective unique solutions of (i), with statistics $f_t,\tilde f_t$. Then we have for some constant $ C>0 $
		\begin{align*}
			W_1(f_t,\tilde{f}_t)\leq Ce^{Ct}W_1(f,\tilde{f}) \quad \text{ for all }t\geq 0.
		\end{align*}
	\end{itemize}
	\end{theorem}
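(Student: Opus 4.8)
The plan is to argue by the method of characteristics in the spirit of Dobrushin~\cite{dobrushin1979vlasov}; the whole proof reduces to controlling the \emph{acceleration map} $f\mapsto A[f]$ defined by the implicit relation \eqref{eq: acceleration}. Fixing $f\in\PM_2(\TM)$, I would read \eqref{eq: acceleration} as the linear equation $\mathcal{L}_f A = g[f]$ for a field $A\colon\TM\to\R^d$, where $(\mathcal{L}_f A)(x,v):=\nabla_v\nabla_v L[f](x,v)\,A(x,v)-(\nabla_v\nabla_v U\ast(Af))(x,v)$ and $g[f]:=\nabla_x L[f]-\nabla_x\nabla_v L[f]\,v+\nabla_x\nabla_v U\ast(vf)$. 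Integrating (C1) in the primed variables gives $\nabla_v\nabla_v L[f]\ge 2c\,\id$, so multiplication by $\nabla_v\nabla_v L[f]$ is boundedly invertible; since $A\mapsto\nabla_v\nabla_v U\ast(Af)$ has bounded kernel $\nabla_v\nabla_v U(x-x',v-v')$ by (C2) and is a compact perturbation of that multiplication, the Fredholm alternative together with (C1) makes $\mathcal{L}_f$ invertible, and one sets $A[f]:=\mathcal{L}_f^{-1}g[f]$. From (C2) one reads $|g[f](x,v)|\le C(1+|v|)$, hence $|A[f](x,v)|\le C(1+|v|)$; differentiating the equation for $A[f]$ in $(x,v)$ and using (C2) gives $|A[f](x,v)-A[f](x',v')|\le C(|x-x'|+|v-v'|)$, both uniformly over $f$ with bounded moments. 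Finally $f$ enters $\mathcal{L}_f$ and $g[f]$ only through $U\ast f,\nabla U\ast f,\nabla^2 U\ast f$, which are Wasserstein-Lipschitz since $\nabla U,\nabla^2 U,\nabla^3 U$ are bounded, and from the identity
\begin{align*}
\mathcal{L}_f\big(A[f]-A[\tilde f]\big)=\big(g[f]-g[\tilde f]\big)+\big(\mathcal{L}_{\tilde f}-\mathcal{L}_f\big)A[\tilde f]
\end{align*}
one obtains $|A[f](x,v)-A[\tilde f](x,v)|\le C_R(1+|v|)\,W_1(f,\tilde f)$ for all $f,\tilde f$ with second moment at most $R$.

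With this in hand, I would set up a fixed point for the coupled flow. For $g\in C([0,T];\PM_2(\TM))$ with $g_0=f$, the time-dependent field $(x,v)\mapsto(v,A[g_t](x,v))$ is continuous in $t$, globally Lipschitz in $(x,v)$, and of linear growth, hence generates a flow $\Psi^g_{s,t}$ on $\TM$, and Grönwall along characteristics ($|\ddot\gamma_t|\le C(1+|\dot\gamma_t|)$) gives $\sup_{t\le T}m_2\big((\Psi^g_{0,t})_\#f\big)\le C_T(1+m_2(f))$. Setting $\mathcal{T}(g)_t:=(\Psi^g_{0,t})_\#f$ and transporting a single point simultaneously by $\Psi^g$ and $\Psi^{\tilde g}$, the fact that the two fields differ by at most $C_R(1+|v|)W_1(g_t,\tilde g_t)$ and are $C$-Lipschitz in $(x,v)$ yields $W_1(\mathcal{T}(g)_t,\mathcal{T}(\tilde g)_t)\le C\int_0^t\big(W_1(g_s,\tilde g_s)+W_1(\mathcal{T}(g)_s,\mathcal{T}(\tilde g)_s)\big)\,ds$, the velocity weight being absorbed by the propagated moment bound. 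By Grönwall $\mathcal{T}$ is a contraction on a suitable closed ball of $C([0,T];\PM_2(\TM))$ for small $T$ (or for all $T$ in a weighted sup-norm); iterating over $[0,T],[T,2T],\dots$ produces $f_t\in C([0,\infty);\PM_2(\TM))$ with $f_t=(\Psi^f_{0,t})_\#f$. I would then let $P\in\PM(PS_\infty^2)$ be the pushforward of $f$ under the map sending $(x_0,v_0)$ to the characteristic $t\mapsto X_t(x_0,v_0)$; since $\int_0^T|\dot\gamma_t|^2\,dt\le C_T(1+|v_0|^2)$ and $f$ has finite second moment, $P$ is concentrated on $W^{1,2}_{\mathrm{loc}}$.

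To conclude: because $t\mapsto A[f_t](X_t,V_t)$ is continuous, the characteristics are $C^1$, so $\dot\gamma_t$ makes sense pointwise and $P_{t,\dot t}=(\gamma\mapsto(\gamma_t,\dot\gamma_t))_\#P=(\Psi^f_{0,t})_\#f=f_t$; as a pushforward under a flow, $f_t$ solves $\partial_t f_t+\dive_x(vf_t)+\dive_v(A[f_t]f_t)=0$, which with the defining relation for $A[f_t]$ is (ii). For existence in (i), running the chain-rule computation of Remark~\ref{rem: Vlasov}(2) backwards along a characteristic gives $\tfrac{d}{dt}[\nabla_v L[f_t](\gamma_t,\dot\gamma_t)]-\nabla_x L[f_t](\gamma_t,\dot\gamma_t)=0$. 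For uniqueness, any other solution $P'$ has statistics $f'_t$ solving the same system with $f'_0=f$, so the estimate in (iii) with $\tilde f=f$ forces $f'_t=f_t$, after which $P'$ is carried by solutions of the Lipschitz ODE $\ddot\gamma_t=A[f_t](\gamma_t,\dot\gamma_t)$ with initial law $f$, whence $P'=P$. For (iii), coupling $f$ and $\tilde f$ by an optimal $W_1$-plan, transporting it by $\Psi^f$ and $\Psi^{\tilde f}$, and running the Grönwall of the previous paragraph between two genuine solutions gives $W_1(f_t,\tilde f_t)\le Ce^{Ct}W_1(f,\tilde f)$.

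The hard part is the first paragraph. The acceleration is only implicitly defined through the nonlocal relation \eqref{eq: acceleration}, and the whole scheme hinges on extracting from (C1)--(C2) the invertibility of $\mathcal{L}_f$ -- which under mere separate convexity is invertible but need not be positive definite -- together with the uniform linear growth and spatial Lipschitz bound for $A[f]$ and its Wasserstein-Lipschitz dependence on $f$ with the correct velocity weight. Once those estimates are established, the fixed-point construction and the stability bound are standard Picard iteration and Grönwall arguments.
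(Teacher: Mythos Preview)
Your overall Dobrushin strategy---fixed point on characteristics, then Gr\"onwall for stability---matches the paper. The genuine divergence is in how you handle the characteristic ODE itself, and there the paper takes a cleaner route that avoids the step you flag as hard.

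The paper never solves the implicit equation \eqref{eq: acceleration} for $A[f]$. Instead it works with the \emph{integrated} Euler--Lagrange equation
\[
\nabla_v L[f_t](\gamma_t,\dot\gamma_t)=\nabla_v L[f_0](x,v)+\int_0^t\nabla_x L[f_s](\gamma_s,\dot\gamma_s)\,ds
\]
and passes to Hamiltonian coordinates via the change of variables $\phi_f(x,v)=(x,\nabla_v L[f](x,v))$. Since (C1) gives $\nabla_v\nabla_v L[f]\ge c\,\id$ (this is the only place (C1) is used), $\phi_f$ is a global bilipschitz diffeomorphism, and in $(x,p)$ the system becomes a first-order ODE with $C$-Lipschitz right-hand side. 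Existence, uniqueness, and the two stability estimates (in the initial point and in $f$) then follow directly from standard ODE theory and Gr\"onwall; the fixed point is set up on flow maps $C^0(\TM;C^1([0,T];\R^d_x))$ with a weighted sup-distance. The acceleration $A[f]$ and the Vlasov form (ii) are obtained a posteriori.

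Your route through $\mathcal{L}_f$ has a real gap at exactly the point you identify. On $L^2(f)$ the operator $\mathcal{L}_f$ is indeed Fredholm of index zero (multiplication by $\nabla_v\nabla_v L[f]$ is boundedly invertible, and the integral operator with bounded kernel against a probability measure is Hilbert--Schmidt), so invertibility is equivalent to trivial kernel. But you give no argument for the kernel being trivial ``by (C1)''. A short computation shows
\[
\langle A,\mathcal{L}_f A\rangle_{L^2(f)}=\int A^\top\nabla_v\nabla_v\psi\,A\,df+\tfrac12\iint(A-A')^\top\nabla_v\nabla_v U\,(A-A')\,df\,df,
\]
and under mere separate convexity (C1) neither summand need be nonnegative: $\psi$ itself is not assumed convex, and $U$ is not assumed convex either. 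So the quadratic form is not sign-definite, and your coercivity/Fredholm sketch does not establish injectivity. (Your parenthetical remark already concedes this, but then the claim of invertibility is left hanging.) Even if injectivity can be rescued, you would still need uniform bounds on $\mathcal{L}_f^{-1}$ in a norm compatible with the linear-growth and Lipschitz estimates you use downstream; the $L^2(f)$ Fredholm argument does not directly give $L^\infty$ or Lipschitz control.

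The paper's Hamiltonian change of variables sidesteps all of this: it uses only the pointwise lower bound $\nabla_v\nabla_v L[f]\ge c\,\id$, which is immediate from (C1), and never needs to invert the nonlocal operator $\mathcal{L}_f$. I would recommend adopting that device; once the characteristic system is recast in $(x,p)$, your remaining arguments (the fixed point, the coupling for (iii), uniqueness via stability) go through essentially as you wrote them.
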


	We construct solutions using a Banach fixed point argument, similar to the construction by Dobrushin \cite{dobrushin1979vlasov}. More precisely, given $P\in \PM(PS_\infty^2)$ with statistics $f_t$, we solve the characteristic system \eqref{eq:PathELEquation}, yielding curves $\gamma^{xv}\in PS_\infty^2$ for every initial value $(x,v)\in \TM$, and define $ \tilde P:= E_f[\delta_{\gamma^{xv}}] \in \PM(PS_\infty^2)$. This constitutes a map  $P\mapsto \tilde P$ for which we seek a fixed point. 
	
	We first prove the stability of the characteristic system \eqref{eq:PathELEquation}.
	\begin{lemma}\label{lem:CharactersitcSystemEstimates1}
		Assume that $\psi,\, U\in C^3(\TM)$ satisfy (C1), (C2). Define for $f\in \PM_2(\TM)$ the Lagrangian $L[f] = \psi + U\ast f\in C^3(\TM)$. Then
\begin{itemize}
	\item [(i)] The Lagrangian satisfies $c\,\id_{\R^d_v} \leq \nabla_v \nabla_v L[f](x,v)$,
	\begin{equation}
		|\nabla^3 L[f](x,v)| + |\nabla^2 L[f](x,v)| + |\nabla L[f](x,0)| + |L[f](x,0)|\leq C,
	\end{equation}
	and
	\begin{equation}
		\begin{aligned}
		&|\nabla^2 L[f](x,v) - \nabla^2 L[\tilde f](x,v)|+ |\nabla L[f](x,v) - \nabla L[\tilde f](x,v)|\leq  CW_1(f,\tilde f)
		\end{aligned}
	\end{equation}
	for all $(x,v)\in \R^d_x\times \R^d_v $ and all $f,\tilde f \in \PM_2(\TM)$.
	\item [(ii)] Let $f\in C([0,\infty);\PM_2(\TM))$ be a curve of statistics. For $(x,v)\in \TM$, there exists a unique solution $\gamma^{xv}\in C^1([0,\infty);\R^d_x)\subseteq PS_\infty^2$ to the characteristic system
	\begin{equation}\label{eq: characteristic}
		\begin{cases}
		(\gamma^{xv}_0, \dot \gamma^{xv}_0)  = (x,v)&\\
		\nabla_v L[f_t](\gamma^{xv}_t,\dot \gamma^{xv}_t) = \int_0^t \nabla_x L[f_s](\gamma^{xv}_s,\dot \gamma^{xv}_s)\,ds + \nabla_v L[f_0](x,v)&\text{ for all }t\geq 0,
		\end{cases}
	\end{equation}
which satisfies $|\gamma^{xv}_t| + |\dot \gamma^{xv}_t| \leq  Ce^{Ct}(1+|x|+|v|)$ and
\begin{equation}\label{eq: different points}
	|\gamma^{xv}_t - \gamma^{x'v'}_t| + |\dot \gamma^{xv}_t - \dot \gamma^{x'v'}_t| \leq  Ce^{Ct}(|x-x'|+|v-v'|).
\end{equation}
\item [(iii)] Let $f,\tilde f\in C([0,\infty);\PM_2(\TM))$, and $\gamma^{xv},\tilde \gamma^{xv}\in C^1([0,\infty);\R^d_x)$ the respective characteristic solutions to \eqref{eq: characteristic}. Then for any $(x,v)\in \TM$, $ t\geq0 $ we have
\begin{equation}\label{eq: different f}
|\gamma^{xv}_t - \tilde\gamma^{xv}_t| + |\dot \gamma^{xv}_t - \dot{\tilde{\gamma}}^{xv}_t| \leq C(e^{Ct}-1)\max_{s\in [0,t]}W_1(f_s,\tilde f_s).
\end{equation}
\end{itemize}
	\end{lemma}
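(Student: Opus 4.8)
The plan is to prove the three parts in order, each building on the previous, with all constants depending only on $c,C$ from (C1),(C2).

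For part (i), I would start from the definition $L[f] = \psi + U*f$ and use that convolution with a probability measure preserves uniform bounds on a function and its derivatives: since $|\nabla^3\psi|+|\nabla^2\psi|+|\nabla\psi(x,0)|+|\psi(x,0)| \le C$ and likewise for $U$ by (C2), and since $(U*f)(x,0) = \int U(x-x',-v')f(dx',dv')$ has the same bound, we get $c\,\id \le \nabla_v\nabla_v L[f]$ from (C1) and the stated uniform bounds. For the Lipschitz-in-$f$ estimate, write $\nabla^2 L[f] - \nabla^2 L[\tilde f] = \nabla^2 U * f - \nabla^2 U * \tilde f = \int \nabla^2 U(x-x',v-v')(f-\tilde f)(dx',dv')$; since $\nabla^2 U$ is Lipschitz (because $|\nabla^3 U|\le C$) and bounded, the Kantorovich--Rubinstein duality for $W_1$ gives $|\nabla^2 L[f](x,v) - \nabla^2 L[\tilde f](x,v)| \le C\,W_1(f,\tilde f)$, and the same argument applied to $\nabla U$ (Lipschitz since $|\nabla^2 U|\le C$) handles the $\nabla L$ term.

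For part (ii), the strategy is a fixed-point/ODE argument. Because $\nabla_v\nabla_v L[f_t] \ge c\,\id$, the map $w \mapsto \nabla_v L[f_t](x,w)$ is a uniformly bi-Lipschitz bijection of $\R^d_v$, so the integral equation \eqref{eq: characteristic} can be rewritten as a genuine ODE $\dot\gamma_t = G_t(\gamma_t, p_t)$, $\dot p_t = \nabla_x L[f_t](\gamma_t, \dot\gamma_t)$ where $p_t := \nabla_v L[f_t](\gamma_t,\dot\gamma_t)$ and $G_t$ is the inverse of $w\mapsto \nabla_v L[f_t](\cdot,w)$; the right-hand side is continuous in $t$ (since $f\in C([0,\infty);\PM_2)$ and by the Lipschitz estimate of (i)) and locally Lipschitz in $(\gamma,p)$, so Picard--Lindelöf gives local existence and uniqueness. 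For the global bound $|\gamma^{xv}_t|+|\dot\gamma^{xv}_t| \le Ce^{Ct}(1+|x|+|v|)$, I would use that $|\nabla_x L[f_t]| \le C + C|\dot\gamma_t|$ (from the bounds in (i) via $|\nabla_x L[f_t](\gamma,v)| \le |\nabla_x L[f_t](\gamma,0)| + \|\nabla^2 L\|_\infty|v| \le C + C|v|$) and that $|\dot\gamma_t| \le c^{-1}(|p_t| + |\nabla_v L[f_t](\gamma_t,0)|) \le C(1+|p_t|)$; then $|p_t| \le |p_0| + \int_0^t (C + C|\dot\gamma_s|)\,ds \le C(1+|x|+|v|) + C\int_0^t (1+|p_s|)\,ds$, and Grönwall closes the estimate, with $|\gamma_t| \le |x| + \int_0^t|\dot\gamma_s|\,ds$ handled afterwards. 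For \eqref{eq: different points}, subtract the equations for $\gamma^{xv}$ and $\gamma^{x'v'}$, set $\delta_t := |\gamma^{xv}_t - \gamma^{x'v'}_t| + |\dot\gamma^{xv}_t - \dot\gamma^{x'v'}_t|$, use Lipschitz continuity of $\nabla L[f_t]$ and of the inverse map $G_t$ (uniformly Lipschitz with constant $1/c$), obtain $\delta_t \le |x-x'|+|v-v'| + C\int_0^t \delta_s\,ds$, and apply Grönwall.

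For part (iii), I would again differentiate the two integral equations \eqref{eq: characteristic} for $f$ and $\tilde f$. The new feature is the source term coming from $\nabla_v L[f_s](\gamma_s,\dot\gamma_s) - \nabla_v L[\tilde f_s](\cdot)$ and $\nabla_x L[f_s] - \nabla_x L[\tilde f_s]$; by part (i) these are each bounded by $C\,W_1(f_s,\tilde f_s) \le C\max_{r\in[0,s]}W_1(f_r,\tilde f_r)$. Writing $\delta_t := |\gamma^{xv}_t - \tilde\gamma^{xv}_t| + |\dot\gamma^{xv}_t - \dot{\tilde\gamma}^{xv}_t|$ and $M_t := \max_{s\in[0,t]}W_1(f_s,\tilde f_s)$, the same manipulation as before (inverting $\nabla_v L$, using the uniform Lipschitz bounds) yields $\delta_t \le C\int_0^t M_s\,ds + C\int_0^t \delta_s\,ds \le C t M_t + C\int_0^t\delta_s\,ds$; Grönwall's inequality then gives $\delta_t \le C t M_t e^{Ct}$, which after adjusting constants is $\le C(e^{Ct}-1)M_t$ (using $t e^{Ct} \le C^{-1}(e^{2Ct}-1)$ or simply absorbing into the exponent). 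Note $\delta_0 = 0$ since both characteristics start at $(x,v)$, which is why no additive constant survives.

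The main obstacle is part (ii): turning the implicit integral equation \eqref{eq: characteristic} into a form amenable to Grönwall requires carefully exploiting the uniform convexity (C1) to invert $w\mapsto\nabla_v L[f_t](x,w)$ with $t$- and $x$-uniform Lipschitz control, and then tracking how the Lipschitz constants of this inverse interact with the linear growth of $\nabla_x L$ to get the clean $Ce^{Ct}$ bounds rather than something worse. The estimates in parts (i) and (iii) are then comparatively routine applications of Kantorovich--Rubinstein duality and Grönwall once the machinery of (ii) is in place.
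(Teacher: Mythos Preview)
Your proposal is correct and follows essentially the same approach as the paper: part (i) is handled by direct estimates on $\psi$, $U$ and Kantorovich--Rubinstein duality (the paper phrases the latter via an optimal coupling, which is equivalent), and parts (ii)--(iii) are obtained by passing to the Hamiltonian variables $(\gamma_t,p_t)$ with $p_t=\nabla_v L[f_t](\gamma_t,\dot\gamma_t)$ using the bi-Lipschitz inverse guaranteed by (C1), then applying Grönwall. The paper packages (ii) and (iii) slightly more cleanly by writing the change of variables as a single bi-Lipschitz diffeomorphism $\phi_{f_t}(x,v)=(x,\nabla_v L[f_t](x,v))$ and working directly with the first-order system for $(\gamma_t,p_t)$, which lets the differential inequality $\tfrac{d}{dt}(|\gamma-\tilde\gamma|+|p-\tilde p|)\le C(|\gamma-\tilde\gamma|+|p-\tilde p|+W_1(f_t,\tilde f_t))$ yield the factor $(e^{Ct}-1)$ in one step rather than via your intermediate $Ct\,e^{Ct}$ bound.
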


	\begin{proof}
\emph{Proof of (i):} We make note of the following link between $\psi_2$ and $L[f]$:
\[
L[f](x,v) = \psi(x,v) + U\ast f(x,v) = \int_{\R^d_{x'}\times \R^d_{v'}} (2\psi_2(x,v,x',v') - \psi(x',v'))f(dx',dv').	
\]
All bounds on $\nabla^3 L[f],\nabla^2 L[f], \nabla L[f]$ follow immediately from (C1) and (C2), since $f$ is a probability measure. The bound on $|L[f](x,0)|$ follows since $\psi(x,v) = \psi_2(x,v,x,v) + \frac12 U(0,0)$.

To show the stability of $f\mapsto \nabla L[f]$ and $f\mapsto \nabla^2L[f]$, choose a coupling $\Gamma\in \PM_2(\R^d_x\times \R^d_v \times \R^d_{x'}\times \R^d_{v'})$ between $f$ and $\tilde f$ with $W_1(f,\tilde f) =\langle \Gamma, |x-x'|+|v-v'|\rangle$. Then
\[
	\begin{aligned}
|\nabla L[f](x_0,v_0) - \nabla L[\tilde f](x_0,v_0)|  =& |\nabla U \ast (f-\tilde f)(x_0,v_0)|\\
 \leq& |\langle\Gamma, \nabla U(x_0 - x, v_0 -v) - \nabla U(x_0-x',v_0-v')\rangle|\\
  \leq& \|\nabla^2 U \|_\infty W_1(f,\tilde f),	
	\end{aligned}
\]
and likewise
\[
	|\nabla^2 L[f](x_0,v_0) - \nabla^2 L[\tilde f](x_0,v_0)| \leq \|\nabla^3 U \|_\infty W_1(f,\tilde f).
\]
\emph{Proof of (ii):} We employ the change of variables $\phi_f:\TM \to \R^d_x \times \R^d_p$ defined through
\[
\phi_f(x,v) = (x,\nabla_v L[f](x,v)).	
\]

Clearly $|\nabla \phi_f(x,v)|\leq C$. Since $\nabla_v\nabla_v L[f]\geq c\,\id_{\R^d_v}$, $\phi_f$ is a Bilipschitz diffeomorphism, with $|\nabla \phi_f^{-1}|\leq C$. Existence and stability of solutions to the characteristic system is achieved by solving the Hamiltonian system $(\gamma_t^{xv},p_t^{xv}):[0,\infty)\to \R^d_x\times \R^d_p$,
\[
\begin{cases}
	(\gamma^{xv}_0,p^{xv}_0) = \phi_{f_0}(x,v)\\
	\dot \gamma^{xv}_t = \phi_{f_t}^{-1}(\gamma^{xv}_t,p^{xv}_t)_v\\
	\dot p^{xv}_t = \nabla_x L_{f_t}\circ \phi_{f_t}^{-1}(\gamma^{xv}_t,p^{xv}_t).
\end{cases}
\]

Since the right-hand side is $C$-Lipschitz, well-posedness of this first-order system follows.

\emph{Proof of (iii):} After the changes of variables, we estimate
\[
	\frac{d}{dt}(|\gamma^{xv}_t - \tilde \gamma^{xv}_t| + |p^{xv}_t - \tilde p^{xv}_t|) \leq C(|\gamma^{xv}_t - \tilde \gamma^{xv}_t| + |p^{xv}_t - \tilde p^{xv}_t| + W_1(f_t,\tilde f_t)).
\]

By Gronwall's inequality
\[
	|\gamma^{xv}_t - \tilde \gamma^{xv}_t| + |p^{xv}_t - \tilde p^{xv}_t| \leq (e^{Ct}-1)\max_{s\in[0,t]}W_1(f_s,\tilde f_s),
\]
and (iii) follows by inverting the changes of variables.
\end{proof}

We can now prove Theorem \ref{thm:CauchyProblem}:
\begin{proof}[Proof of Theorem \ref{thm:CauchyProblem}]\emph{Proof of (i):} We work on the space
	\begin{align*}
		X := C^0(\TM;C^1([0,T];\R^d_x))
	\end{align*} 
for $T>0$. On this space, define the pseudodistance $d: X \times X \to [0,\infty]$ through
\[
d(\gamma,\tilde \gamma) := \sup_{t\in[0,T],\, x\in \R^d_x,\, v\in \R^d_v}\dfrac{|\gamma^{xv}_t-\tilde \gamma^{xv}_t| + |\dot \gamma^{xv}_t- \dot{\tilde \gamma}^{xv}_t|}{1+|x|+|v|}.	
\]

We define the map $F: X \to X$ for which we seek a fixed point. First, given $\gamma\in X$, define the induced statistics $f_t^\gamma:=E_f[\delta_{(\gamma^{xv}_t,\dot\gamma^{xv}_t)}]\in \PM_2(\TM)$ for $t\in[0,T]$. Then, we define $(F\gamma)^{xv}\in C^1([0,T];\R^d_x)$ as the unique solution to \eqref{eq: characteristic} with $f_t = f_t^\gamma$.

We note that $\max_{s\in[0,T]}W_1(f_s^\gamma,f_s^{\tilde \gamma}) \leq d(\gamma,\tilde \gamma)$. By Lemma \ref{lem:CharactersitcSystemEstimates1} (iii) it then follows that $d(F\gamma,F\tilde \gamma) \leq CTe^{CT}d(\gamma,\tilde \gamma)$. Furthermore, $F$ to the complete metric space $X_0 := \{\gamma\in X\,:\,d(\gamma, 0)<\infty\}$, $F$ becomes a contraction if $CTe^{CT}<1$, and thus has a unique fixed point $\gamma\in X_0$. Taking $P = E_f[\delta_{\gamma^{xv}}]$, we get $P_{t,\dot t} = f^\gamma_t$, which implies (i) on $[0,T]$. Repeating the construction at time $T$, we can extend $X$ and thus $P$ to $T=\infty$, proving (i). Statement (ii) follows immediately from Remark \ref{rem: Vlasov}.

\emph{Proof of (iii)}: Let $\gamma,\tilde \gamma \in C^0(\TM;C^1([0,\infty];\R^d_x))$ be the respective solutions for $f,\tilde f$. Combining estimates \eqref{eq: different points} and \eqref{eq: different f} yields for any $T>0$ and any $x,v,x',v'$
\[
	\max_{t\in[0,T]}|\gamma^{xv}_t - \tilde\gamma^{x'v'}_t| + |\dot \gamma^{xv}_t - \dot{\tilde \gamma}^{x'v'}_t| \leq  Ce^{CT}(|x-x'|+|v-v'|) + C(e^{CT}-1)W_1(f,\tilde f).
\]

Choose $T$ small enough that $C(e^{CT}-1)\leq \frac12$. Choose an optimal coupling $\pi\in \PM_1(\R^d_x\times \R^d_v\times \R^d_{x'}\times \R^d_{v'})$ between $f$ and $\tilde f$, inducing couplings $\pi_t = (\gamma^{xv}_t,\dot \gamma^{xv}_t,\tilde \gamma^{x'v'}_t,\dot{\tilde \gamma}^{x'v'}_t)_\#\pi$ between $f_t$ and $\tilde f_t$, so that
\[
\max_{t\in[0,T]}W_1(f_t,\tilde f_t)	\leq Ce^{CT}W_1(f,\tilde f) + \frac12 \max_{t\in[0,T]}W_1(f_t,\tilde f_t).
\]

Iterating the above and increasing $C$ yields
\[
	W_1(f_t,\tilde f_t) \leq C e^{Ct}W_1(f,\tilde f)
	\]
for all times $t\in[0,\infty)$.
	\end{proof}

\section{The relaxed action}\label{sec:PropertiesRelaxFunctional}
In this section we give alternative representations of the relaxed action in Theorem \ref{theorem: relaxation} and prove several properties used later on.  
 
\subsection{Alternative representations of the relaxed action}
We now show that the relaxation \eqref{eq:Relaxation} can be expressed in two alternative ways. The first is through replacing finite convex combinations with uncountable infinite convex combinations, represented through an integral. The second eschews Markov kernels altogether, instead writing the relaxation through action minimization on the tangent space, where particles are only allowed to move through the tangent space $\R^d_v$ while keeping their positions constant.

\begin{lemma}\label{lem:AltFormRelaxation}
	The functional $ \Phi^\rel $ defined by \eqref{eq:Relaxation} satisfies
	\begin{align}\label{eq:AltFormRelaxation}
		\Phi^\rel(f)= \inf\left\lbrace \int_0^1\Phi(f\pi_s)\, ds \,|\,  \pi:\TM\times [0,1]\to \PM_p(\R^d_v), \right.
		\\ 
		\left. \int_0^1\pi_s\, ds \in MK_p(\TM) \right\rbrace 
	\end{align}
	and
	\begin{align}\label{eq: path relaxation}
		\Phi^\rel(f) = \inf\left\lbrace \int_0^1 \Phi(R_{s,\dot s})\,ds\,|\, R\in \PM(TPS^p), R_1 = f \right\rbrace,
	\end{align}
	where $TPS^p = \{\gamma\in W^{1,p}([0,1];\R^d_x\times \R^d_v)\,|\, \gamma_x\text{ constant},\gamma_v(0) = 0\}$ denotes vertical paths in the tangent bundle. For $R\in \PM(TPS^p)$ and almost every $s\in [0,1]$, $R_{s,\dot s}\in \PM(\R^d_x\times \R^d_v)$ is defined as $\E_R[\delta_{(\gamma(s)_x,\dot \gamma(s)_v)}]$ similarly as $ P_{t,\dot t} $ in Section \ref{subsec:Notation}.
\end{lemma}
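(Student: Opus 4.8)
Throughout I write $G_1(f)$ and $G_2(f)$ for the right-hand sides of \eqref{eq:AltFormRelaxation} and \eqref{eq: path relaxation}; the claim is $\Phi^\rel=G_1=G_2$. The inequality $G_1\le\Phi^\rel$ is immediate, since a finite competitor $(\lambda_i,\pi_i)_{i=1}^N$ in \eqref{eq:Relaxation} yields the piecewise-constant kernel $\pi_s:=\pi_i$ on an interval of length $\lambda_i$, whose time-average is $\sum_i\lambda_i\pi_i\in MK_p(\TM)$ and whose cost is $\int_0^1\Phi(f\pi_s)\,ds=\sum_i\lambda_i\Phi(f\pi_i)$. For the converse, given a measurable $\pi:\TM\times[0,1]\to\PM_p(\R^d_v)$ with $\bar\pi:=\int_0^1\pi_s\,ds\in MK_p(\TM)$, I discretize: on the intervals $I^N_i:=[(i-1)/N,i/N)$ set $\pi^N_i:=N\int_{I^N_i}\pi_s\,ds$ and $\lambda_i:=1/N$, so that $\sum_i\lambda_i\pi^N_i=\bar\pi\in MK_p(\TM)$ \emph{exactly}; hence $\Phi^\rel(f)\le\frac1N\sum_i\Phi(f\pi^N_i)=\int_0^1\Phi(f\pi^N_{\lceil Ns\rceil})\,ds$. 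Since $f\pi^N_{\lceil Ns\rceil}=N\int_{I^N_{\lceil Ns\rceil}}f\pi_r\,dr$, the Lebesgue differentiation theorem for the Bochner-integrable curve $s\mapsto f\pi_s$ gives $f\pi^N_{\lceil Ns\rceil}\to f\pi_s$ in $W_p$ for a.e.\ $s$; by (A2) this yields $\Phi(f\pi^N_{\lceil Ns\rceil})\to\Phi(f\pi_s)$ a.e., and (A1) together with the $L^1$-convergence of the conditional expectations $s\mapsto\langle f\pi^N_{\lceil Ns\rceil},|v|^p\rangle$ supplies the equi-integrability needed to pass to the limit. Taking the infimum over $\pi$ gives $\Phi^\rel(f)\le G_1(f)$, so $\Phi^\rel=G_1$.

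For $G_1\le G_2$, take $R\in\PM(TPS^p)$ with $R_1=f$, disintegrate $R=\int R^{(x,v)}\,f(dx,dv)$ along the endpoint map $\gamma\mapsto\gamma(1)$, and set $\pi_s(x,v,\cdot):=(\gamma\mapsto\dot\gamma_v(s))_\#R^{(x,v)}$. Then $f\pi_s=R_{s,\dot s}$ by Fubini, and since $\gamma_v(0)=0$ and $\gamma_v(1)=v$ on $\supp R^{(x,v)}$,
\[
\int_0^1\int_{\R^d_v}v'\,\pi_s(x,v,dv')\,ds=\E_{R^{(x,v)}}\Bigl[\int_0^1\dot\gamma_v(s)\,ds\Bigr]=\E_{R^{(x,v)}}[\gamma_v(1)]=v,
\]
so $\bar\pi=\int_0^1\pi_s\,ds\in MK_p(\TM)$ (after an inconsequential redefinition as $\delta_v$ off an $f$-null set) and $\pi$ is admissible in $G_1$ with the same cost; hence $G_1(f)\le G_2(f)$.

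The remaining inequality $G_2\le\Phi^\rel\,(=G_1)$ is the heart of the matter. Fix a finite competitor $(\lambda_i,\pi_i)_{i=1}^N$ of finite cost, partition $[0,1]$ into blocks $I_i$ of length $\lambda_i$, and subdivide each $I_i$ into $n$ equal sub-blocks. Build $R^n$ as follows: sample $(X,V)\sim f$; given $(X,V)$, draw on each sub-block of $I_i$ an independent velocity with law $\pi_i(X,V,\cdot)$ and let the vertical path (with $\gamma_v(0)=0$) run at that constant velocity there. On every sub-block of $I_i$ the statistics then equal $f\pi_i$ exactly, while the terminal velocity is $\gamma_v(1)=\frac1n\sum_{k=1}^nY_k$ with $Y_k=\sum_i\lambda_i(\text{velocities})$ i.i.d.\ of conditional mean $\sum_i\lambda_i\int v'\pi_i(X,V,dv')=V$ (using $\bar\pi\in MK_p$); by the $L^p$ law of large numbers (finiteness of $\sum_i\lambda_i\langle f\pi_i,|v|^p\rangle$ follows from (A1)) we get $\eta_n:=\E\,|\gamma_v(1)-V|^p\to0$. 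Appending to each path a segment on $[1,1+\delta_n]$ at constant velocity $(V-\gamma_v(1))/\delta_n$ makes the endpoint exactly $(X,V)\sim f$, and rescaling time back to $[0,1]$ produces $R^n\in\PM(TPS^p)$ with $R^n_1=f$. With $\delta_n:=\eta_n^{1/(2(p-1))}$ (recall $p>1$), (A1) bounds the correction segment's contribution to the action by $C\delta_n+C\eta_n/\delta_n^{p-1}\to0$, while the time-rescaling alters the remaining statistics only by the velocity-dilation factor $1+\delta_n\to1$, harmless by (A2); hence $\int_0^1\Phi(R^n_{s,\dot s})\,ds\to\sum_i\lambda_i\Phi(f\pi_i)$. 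Therefore $G_2(f)\le\sum_i\lambda_i\Phi(f\pi_i)$ for every finite competitor, i.e.\ $G_2(f)\le\Phi^\rel(f)$, which closes the chain.

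I expect this third step to be the main obstacle. The obvious attempt --- sampling $(X,V)\sim f$ once and assigning block $I_i$ a velocity $V_i\sim\pi_i(X,V,\cdot)$ --- fails, because one cannot in general couple the $V_i$ so that $\sum_i\lambda_iV_i=V$ almost surely (only in mean), so the path endpoint would fail to be distributed as $f$ and the constraint $R_1=f$ would be violated; the martingale constraint $\bar\pi\in MK_p$ only forces the barycenters to match. Splitting each block into many conditionally independent copies so that the endpoint concentrates by the law of large numbers, and then correcting by a vanishing end-segment, is what repairs this; the care required is in the uniform $L^p$-integrability needed for the LLN and in controlling the action of the correction segment uniformly in $n$. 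The equi-integrability step in the proof of $\Phi^\rel\le G_1$ also needs a little attention, but is routine.
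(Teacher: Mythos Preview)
Your proof is correct, and the overall chain of inequalities matches the paper's. The step $G_1\le G_2$ is essentially identical. In the step $\Phi^\rel\le G_1$ you discretize directly and appeal to a metric-space Lebesgue differentiation argument (using convexity of $W_p^p$ under averaging of measures), whereas the paper first mollifies $s\mapsto\pi_s$ via Lusin's theorem and then discretizes; your route is arguably cleaner, though you should be a little more explicit that $W_p^p\bigl(\fint_I f\pi_r\,dr,\,f\pi_s\bigr)\le\fint_I W_p^p(f\pi_r,f\pi_s)\,dr$ is what lets Lebesgue differentiation apply in the Wasserstein space.

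The genuinely different step is $G_2\le\Phi^\rel$. Your construction (i.i.d.\ subsampling, $L^p$ law of large numbers, correction segment, time rescaling) works, but the paper avoids all of this with a single elegant trick: represent each $\pi_i(x,v,\cdot)$ by a measurable map $X_i:\TM\times\T^1\to\R^d_v$ with $(X_i(x,v,\cdot))_\#\Lm^1=\pi_i(x,v)$, and for each fixed $\theta\in\T^1$ define a path whose velocity on the block $I_i=(t_i,t_{i+1})$ is $\dot\gamma_v(s)=X_i\bigl(x,v,\tfrac{s-t_i}{\lambda_i}+\theta\bigr)$. Because the argument sweeps once around the full circle as $s$ traverses $I_i$, one has $\int_{I_i}\dot\gamma_v\,ds=\lambda_i\int_{\T^1}X_i(x,v,\theta')\,d\theta'=\lambda_i\langle\pi_i(x,v),v'\rangle$ \emph{for every} $\theta$, so the endpoint is exactly $v$ by the martingale condition, with no approximation whatsoever. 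Averaging over $\theta$ (the only randomness besides $(x,v)\sim f$) then gives $R_{s,\dot s}=f\pi_i$ exactly on each block. This replaces your LLN, correction segment and rescaling by a single change of variables on $\T^1$; it is worth knowing, since the same device reappears in the paper's recovery-sequence construction for the upper bound of the relaxation theorem.
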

\begin{proof}
	\emph{Proof of \eqref{eq:AltFormRelaxation}.} Let us write $ \Psi $ for the functional defined via formula \eqref{eq:AltFormRelaxation}. Certainly, we have $ \Phi^\rel(f)\geq \Psi(f) $. For the other inequality, let $ \eps>0 $ and $ \pi:\TM\times [0,1]\to \PM_p(\R^d_v) $ such that $ \int_0^1\pi_s\, ds \in MK_p(\TM) $ and
	\begin{align*}
		\int_0^1\Phi(f\pi_s)\, ds \leq \Psi(f) +\eps < \infty.
	\end{align*}
	We define a mollification $ \pi^k_s $ in the variable $ s\in [0,1] $ via a standard mollifier $ \eta_k:\R\to \R $, i.e. $ \eta_k(r):=k \eta(rk) $, $ \eta\in C_c^\infty(\R;\R) $, $ \supp \eta \subset (-1,1) $, $ \eta\geq 0 $, $ \int_{\R}\eta(r)\, dr=1 $. More precisely, we extend $ s\mapsto \pi_s $ periodically to all of $ \R $ and define the mollified Markov kernel $\pi^k_s: \TM \times [0,1] \to \PM_p(\R^d_v)$,
	\begin{align*}
		\pi^k_s(x,v) := & \int_\R \eta_k(s-r) \pi_r(x,v)\, dr .
	\end{align*} 
	We observe first that $\pi^k_s$ is an admissible Markov kernel, since
	\[
	\int_0^1 \pi^k_s\,ds = \int_0^1 \pi_s\,ds \in MK_p(\TM).	
	\]
	
	As $k\to \infty$, the mollified kernels converge to the original in the sense that
	\begin{equation}
		\lim_{k\to\infty} \int_0^1 \int_{\TM} W_p^p(\pi_s(x,v),\pi^k_s(x,v))f(dx,dv)\,ds = 0.
	\end{equation}
	This is shown by first using Lusin's theorem to approximate $\pi_s$ with a kernel that is continuous in $s$, mollifying the continuous kernel, and using the triangle inequality for $W_p$.
	
	The next step involves discretizing the mollified kernel, yielding for $N\in\N$, $i=1,\ldots,N$, the Markov kernels $\pi_i^{k,N}:\TM \to \PM_p(\R^d_v)$,
	\begin{equation}
		\pi_i^{k,N}(x,v) := \frac1N \int_{\frac{i-1}{N}}^{\frac{i}{N}} \pi^k_s(x,v)\,ds.
	\end{equation}
	Since every $\pi^k_s$ is continuous in $s$, we infer
	\begin{equation}
		\lim_{N\to\infty} \int_0^1 \int_{\TM} W_p^p(\pi^k_s(x,v),\pi^{k,N}_{i(s,N)}(x,v))f(dx,dv)\,ds = 0,
	\end{equation}
	where $i(s,N) := \lceil sN \rceil$. Again, we have $\frac{1}{N}\sum_{i=1}^N \pi_i^{k,N}\in MK_p(\TM)$. Using the continuity property (A2) of $\Phi$, we estimate
	\begin{align*}
		\frac1N \sum_{i=1}^N \Phi(f\pi^{k,N}) &\leq \int_0^1 \Phi(f\pi_s)\,ds 
		\\
		&\quad + \int_0^1 \langle f\pi_s + f\pi^{k,N}_{i(s,N)}, C + C|v|^p \rangle ^{(p-1)/p} W_p\left(f\pi_s,f\pi^{k,N}_{i(s,N)}\right)\,ds.
	\end{align*}
	
	Using H\"older's inequality and the convexity of the Wasserstein distance allows us to estimate the error term by
	\begin{equation}
		\begin{aligned}
			&		\int_0^1 \langle f\pi_s + f\pi^{k,N}_{i(s,N)}, C + C|v|^p \rangle ^{(p-1)/p} W_p\left(f\pi_s,f\pi^{k,N}_{i(s,N)}\right)\,ds\\
			\leq & \left(\int_0^1 \langle 2f\pi_s,C+C|v|^p \rangle\,ds\right)^{(p-1)/p}\left(\int_0^1 W_p^p(f\pi_s,f\pi^{k,N}_{i(s,N)})\,ds\right)^{1/p}.
		\end{aligned}
	\end{equation}
	The second integral tends to zero along a diagonal sequence, while the first term is finite by the growth condition (A1), showing that $\Phi^{\rel} \leq \Psi$ and consequently \eqref{eq:AltFormRelaxation}.
	
	\emph{Proof of \eqref{eq: path relaxation}.} Let us write $\Xi$ for the functional defined via formula \eqref{eq: path relaxation}. We first show that $\Xi(f)\leq \Phi^\rel(f)$:
	
	Given $N\in \N$ Markov kernels $\pi_i:\R^d_x \times \R^d_v \to \PM(\R^d_v)$ and weights $\lambda_i\in [0,1]$ with $\sum_{i=1}^N \lambda_i=1$ and $\sum_{i=1}^N\lambda_i \pi_i$ a martingale kernel, we define for $i=0,\ldots,N$ the intermediate times $t_i:=\sum_{j=1}^i \lambda_i$ and show the existence of a probability measure $R\in \PM(TPS^p)$ with $R_{s,\dot s} = f\pi_i$ for $s\in (t_i,t_{i+1})$ and $R_1 = f$. We do so by defining $N$ random variables $X_i:\R^d_x\times \R^d_v \times \Omega \to \R^d_v$ with distribution $X_i(x_0,v_0,\cdot) \sim \pi_i(x_0,v_0)$. In fact, we can choose our probability space as the circle $\Omega = \T^1 = \R/\Z$, $\mathcal{F} = \mathcal{B}(\T^1)$, $P = \Lm^1$ the Haar measure on the circle. We do not need the $X_i:\R^d_x\times \R^d_v \times \T^1\to \R^d_v$ to be independent. With this, we define for every $(x_0, v_0,\theta)\in \R^d_x\times \R^d_v\times \T^1$ the curve $\gamma(x_0,v_0,\theta)\in TPS^p$ through $\gamma(x_0,v_0,\theta)_x =x_0$ and
	\[
	\dot \gamma(x_0,v_0,\theta,s)_v = X_i\left(x_0,v_0,\frac{s-t_i}{t_{i+1}-t_i}+\theta\right) \text{ for }s\in(t_i,t_{i+1}).	
	\]
	Note that for all $x_0,\in\R^d_x$, $v_0\in \R^d_v$, $\theta_0\in \T^1$ and all $i=0,\ldots,N_1$ we have by a change of variables
	\[
	\int_{t_i}^{t_{i+1}} \dot \gamma(x_0,v_0,\theta_0,s)\,ds = \lambda_i \int_{\T^1} X_i(x_0,v_0,\theta)\,d\theta = \lambda_i\langle \pi_i,v \rangle,
	\]
	so that
	\[
	\gamma(x_0,v_0,\theta_0,1) = \int_0^1 \dot \gamma(x_0,v_0,\theta_0,s)_v\,ds = \sum_{i=1}^N \lambda_i \langle \pi_i,v\rangle = v_0
	\]
	by the martingale property of $\sum \lambda_i \pi_i$. We define $R\in \PM(TPS^p)$ through
	\[
	R := \int_{\R^d_x\times \R^d_v} \int_{\T^1} \delta_{\gamma(x,v,\theta)}\,d\theta f(dx,dv),
	\]
	so that $R_1 = f$ and $R_{s,\dot s} = f\pi_i$ for $s\in (t_i,t_{i+1}]$. This shows that $\Xi\leq \Phi^\rel$.
	
	Finally, we show that $\Xi\leq \Psi$. Given $R\in \PM_p(TPS^p_T)$ with $R_1 = f\in \PM_p(\R^d_x \times \R^d_v)$, define for $f \otimes \Lm^1$-almost every $(x_0,v_0,s)\in   \R^d_x \times \R^d_v \times [0,1]$ the Markov kernel $\pi_s(x_0,v_0)$ as the conditional expectation (which exists by the disintegration theorem \cite[Theorem 5.3.1]{ambrosio2005gradient})
	\[
	\pi_s(x_0,v_0) := \E_R[\delta_{\dot \gamma(s)_v}\,|\,\gamma(1) = (x_0,v_0)]	\in \PM_p(\R^d_v).
	\]
	
	Then by the tower property \cite[Theorem 34.2]{billingsley2017probability} we have $f\pi_s = \E_R[\delta_{(\gamma(s)_x,\dot \gamma(s)_v)}] = R_{s, \dot s}$ for $\Lm^1$-almost every $s$. We extend $\pi_s$ to all of $[0,1]\times (\R^d_x \times \R^d_v \setminus \mathrm{supp}f)$ by $\pi_s(x_0,v_0):= \delta_{v_0}$. Then $\int_0^1 \pi_s\,ds$ is clearly a martingale kernel for such $(x_0,v_0)$. For $(x_0,v_0)\in \mathrm{supp}f$ we have by Fubini's theorem \cite[Theorem 18.3]{billingsley2017probability} that
	\[
	\int_0^1 \langle \pi_s(x_0,v_0), v \rangle \, ds = \E_R\left[\int_0^1 \langle \delta_{\dot \gamma(s)_v},v \rangle \, ds \,|\,\gamma(1) = (x_0,v_0)\right] = v_0.
	\]
	
	This is precisely the martingale property, concluding the proof.
\end{proof}

\subsection{Properties of the relaxed action}
The following lemma summarizes useful properties of $ \Phi^\rel $.
\begin{prop}\label{prop:PropertiesRelaxation}
Let $\Phi$ satisfy assumptions (A1) and (A2). Then
\begin{enumerate}
	\item There are constants $0<c<C$ such that $\langle f, -C+c |v|^p \rangle \leq \Phi^\rel(f) \leq \langle f, C+C |v|^p \rangle$.
	\item $\Phi^\rel$ is $W_p$-continuous with 
		\begin{equation*}
			|\Phi^\rel(f) - \Phi^\rel(f')| \leq \langle f+f',  C+C |v|^p \rangle^{(p-1)/p} W_p(f,f').
		\end{equation*}
	
	\item $\Phi^\rel$ is increasing in the convex order.
	
	\item It holds $ (\Phi^\mathrm{conv})^\inc\leq (\Phi^\inc)^\mathrm{conv} $ and $(\Phi^\mathrm{conv})^\inc \leq \Phi^\rel \leq \Phi^\inc $. In particular, if $\Phi$ is convex, then $\Phi^\rel = \Phi^\inc$ and $\Phi^\rel$ is also convex.
\end{enumerate}
\end{prop}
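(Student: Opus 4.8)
## Proof strategy for Proposition \ref{prop:PropertiesRelaxation}

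The plan is to establish the four claims essentially in order, using the representation \eqref{eq:Relaxation} together with the alternative formulas from Lemma \ref{lem:AltFormRelaxation} wherever convenient, and the characterization via the convex order from Strassen's theorem.

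\textbf{Parts (1) and (2).} For the growth bound (1), I would argue that any competitor $\sum_i \lambda_i \pi_i$ in \eqref{eq:Relaxation} satisfies the martingale constraint $\sum_i \lambda_i \pi_i \in MK_p(\TM)$, so that $\sum_i \lambda_i (f\pi_i) = f(\sum_i \lambda_i \pi_i) \succeq f$ in the $v$-convex order; in particular the position marginal and the first $v$-moment of $\sum_i \lambda_i(f\pi_i)$ agree with those of $f$. Since $v \mapsto |v|^p$ is convex, $\langle f\pi_i, |v|^p \rangle$ summed against $\lambda_i$ is bounded below by $\langle f, |v|^p\rangle$, giving the lower bound $\langle f, -C + c|v|^p\rangle \le \Phi^\rel(f)$ after applying (A1) termwise. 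The upper bound is immediate by taking the trivial competitor $\pi(x,v) = \delta_v$ (which is a martingale kernel), so $\Phi^\rel(f) \le \Phi(f) \le \langle f, C+C|v|^p\rangle$. For the continuity estimate (2), I would fix $f, f'$ and an optimal (or near-optimal) competitor $\{\lambda_i, \pi_i\}$ for $\Phi^\rel(f)$; using the \emph{same} kernels on $f'$ gives an admissible competitor for $\Phi^\rel(f')$, so $\Phi^\rel(f') - \Phi^\rel(f) \le \sum_i \lambda_i(\Phi(f'\pi_i) - \Phi(f\pi_i))$. Applying (A2) termwise and then Hölder's inequality in $i$ (with exponents $p/(p-1)$ and $p$), together with the fact that $W_p(f\pi_i, f'\pi_i) \le W_p$-ish bound controlled by $W_p(f,f')$ via the common coupling of $f,f'$ pushed through $\pi_i$, and the moment bound $\sum_i \lambda_i \langle f\pi_i + f'\pi_i, C+C|v|^p\rangle = \langle f + f', C+C|v|^p\rangle$ (again the martingale constraint), yields the claimed inequality after symmetrizing.

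\textbf{Part (3).} Suppose $g \succeq f$. By Strassen's theorem there is a martingale kernel $\rho$ with $g = f\rho$. Given any competitor $\{\lambda_i, \pi_i\}$ realizing (nearly) $\Phi^\rel(f)$, I would form the composed kernels $\tilde\pi_i := \rho \circ \pi_i$ (first $\pi_i$, then $\rho$, i.e. $(f\rho)\pi_i = f(\rho\pi_i)$ — one must be slightly careful about which order composes correctly so that the resulting measure is $g\pi_i$ and the averaged kernel $\sum_i \lambda_i \tilde\pi_i$ is still a martingale kernel; this follows because composition of martingale kernels is a martingale kernel and $\rho$ does not shift the center of momentum). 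Then $\{\lambda_i, \tilde\pi_i\}$ is admissible for $\Phi^\rel(g)$ with $\sum_i \lambda_i \Phi(g\tilde\pi_i) = \sum_i \lambda_i\Phi(f\pi_i)$, giving $\Phi^\rel(g) \le \Phi^\rel(f)$ — wait, this is the wrong direction, so instead I would use the formulation: every competitor for $\Phi^\rel(f)$ whose averaged kernel is $\sum\lambda_i\pi_i$ can be post-composed, but to get monotonicity \emph{upward} one argues that $\Phi^\rel(g) \ge \Phi^\rel(f)$ because any competitor $\{\mu_j, \sigma_j\}$ for $g$ combines with $\rho$ to give a competitor for $f$ of equal cost (the averaged kernel $(\sum_j \mu_j \sigma_j)\circ\rho$ is still martingale), whence $\Phi^\rel(f) \le \Phi^\rel(g)$. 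I expect getting the composition order and the martingale-preservation bookkeeping exactly right here to be the main technical obstacle.

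\textbf{Part (4).} For the inequalities relating $\Phi^\rel$ to $\Phi^{\mathrm{conv}}$, $\Phi^\inc$, and $(\Phi^{\mathrm{conv}})^\inc$: the bound $\Phi^\rel \le \Phi^\inc$ follows because $\Phi^\rel$ is below $\Phi$ (trivial kernel) and increasing in the convex order by (3), while $\Phi^\inc$ is the largest convex-order-increasing functional below $\Phi$; more directly, $\Phi^\rel(f) = \inf \sum_i \lambda_i \Phi(f\pi_i) \le \inf_{g \preceq f}\Phi(g)$ is not quite it — rather $\Phi^\inc(f) := \inf\{\Phi(g) : g \preceq f\}$ and any such $g = f\pi$ for a single martingale kernel is a one-term competitor in \eqref{eq:Relaxation}, so $\Phi^\rel(f) \le \Phi^\inc(f)$. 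For $(\Phi^{\mathrm{conv}})^\inc \le \Phi^\rel$: a competitor $\{\lambda_i,\pi_i\}$ gives $\sum_i \lambda_i \Phi(f\pi_i) \ge \Phi^{\mathrm{conv}}(\sum_i \lambda_i f\pi_i) = \Phi^{\mathrm{conv}}(f(\sum\lambda_i\pi_i)) \ge (\Phi^{\mathrm{conv}})^\inc(f)$ since $f(\sum\lambda_i\pi_i) \succeq f$. The abstract inequality $(\Phi^{\mathrm{conv}})^\inc \le (\Phi^\inc)^{\mathrm{conv}}$ is a general fact about taking convex envelopes and convex-order-lower-envelopes (both are monotone operations and one checks the composition order). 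Finally, if $\Phi$ is convex then $\Phi^{\mathrm{conv}} = \Phi$, so the sandwich collapses to $\Phi^\inc \le \Phi^\rel \le \Phi^\inc$, giving $\Phi^\rel = \Phi^\inc$; convexity of $\Phi^\rel$ in this case then follows because $\Phi^\inc$ of a convex functional is convex (an infimum over $g \preceq f$ of a convex functional, with the constraint set $\{g : g \preceq f\}$ being convex in $f$ in the appropriate sense, preserves convexity — this needs the linearity of $f \mapsto$ the convex-order cone, which I would spell out).
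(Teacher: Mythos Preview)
Your outline for parts (1), (3), and (4) is essentially the paper's argument, modulo some notational confusion (in (3) and (4) you repeatedly write $g \preceq f$ when you mean $g \succeq f$; recall $\Phi^\inc(f) = \inf\{\Phi(g) : g \succeq f\}$ and $g = f\pi$ for $\pi$ a martingale kernel means $g \succeq f$). In (3) you eventually land on the correct direction: a competitor $\{\mu_j,\sigma_j\}$ for $g = f\rho$ yields the competitor $\{\mu_j,\rho\sigma_j\}$ for $f$ with the same cost, and $\sum_j \mu_j(\rho\sigma_j) = \rho(\sum_j\mu_j\sigma_j)$ is a composition of martingale kernels, hence martingale.

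There is, however, a genuine gap in your plan for (2). Applying the \emph{same} kernels $\pi_i$ to $f'$ does not give you control on $W_p(f\pi_i,f'\pi_i)$ in terms of $W_p(f,f')$: the kernel $\pi_i(x,v)$ may depend discontinuously on $(x,v)$, so pushing an optimal coupling of $f,f'$ through $\pi_i$ yields no useful bound on the velocity displacement. Moreover, your claimed identity $\sum_i \lambda_i \langle f\pi_i + f'\pi_i, C+C|v|^p\rangle = \langle f+f', C+C|v|^p\rangle$ is false; the martingale constraint only fixes the first $v$-moment, and for the convex $|v|^p$ you get $\geq$, not equality, which is the wrong direction for bounding the error term.

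The paper's fix is a \emph{translation coupling}: given a near-optimal kernel $\pi_s$ for $f$ and an optimal $W_p$-coupling $\Gamma$ between $f$ and $f'$, disintegrate $\Gamma$ as $\Gamma'(dx,dv\,|\,x',v')f'(dx',dv')$ and define
\[
\pi'_s(x',v') := \int_{\TM} (T_{v'-v})_\# \pi_s(x,v)\,\Gamma'(dx,dv\,|\,x',v'),
\]
where $T_{v'-v}$ is the shift $v''\mapsto v''+v'-v$. One checks that $\int_0^1\pi'_s\,ds$ is again a martingale kernel (the shift exactly compensates the change of base point), and the natural coupling between $f\pi_s$ and $f'\pi'_s$ has velocity displacement equal to $|v-v'|$ pointwise, giving the contraction $W_p(f\pi_s,f'\pi'_s)\le W_p(f,f')$. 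The moment term $\int_0^1\langle f\pi_s + f'\pi'_s, 1+|v|^p\rangle\,ds$ is then handled not via the martingale identity but by combining the contraction with (A1) to bound $\int_0^1\langle f\pi_s,|v|^p\rangle\,ds$ through $\int_0^1\Phi(f\pi_s)\,ds \le \Phi^\rel(f)+\eps \le \Phi(f)+\eps$.
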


Before we prove the above proposition, let us mention the following negative results:
\begin{enumerate}
	\item[(5)] The function $ \Phi^\rel $ is in general not convex.
	\item[(6)] The inequality $ \Phi^\rel \leq \Phi^\inc $ in (4) is in general strict.
	\item[(7)] The function $ (\Phi^\inc)^\mathrm{conv} $ is in general not increasing, more precisely $ (\Phi^\inc)^\mathrm{conv} \neq ((\Phi^\inc)^\mathrm{conv})^\inc $. In particular, in general the inequality $ (\Phi^\mathrm{conv})^\inc\leq (\Phi^\inc)^\mathrm{conv} $ in (4) is strict.
	\item[(8)] Neither $ \Phi^\rel \leq (\Phi^\inc)^\mathrm{conv} $ nor $ (\Phi^\inc)^\mathrm{conv}\leq \Phi^\rel $ holds in general.
\end{enumerate}
We first give counterexamples to (5)-(8).
\begin{example}\label{ex:NotConv}
	Concerning (5) and (6) above we consider the following example. Let $d=1$ and $\Phi(f):= \langle f, \varphi + v^2 \rangle - \langle f, v \rangle^2$, for $\varphi(x,v) := \frac14(v^2-1)^2 \in C^2(\R)$. The function $\Phi$ clearly satisfies the growth and continuity conditions (A1) and (A2). A direct calculation shows that $\Phi^\rel(\delta_0 \otimes \delta_{\pm 1}) = 0$, since $\varphi(\pm 1) = 0$. On the other hand, we find that
	\[
	\begin{aligned}
		  & \Phi^\rel\left(\frac{1}{2}(\delta_0 \otimes \delta_1 + \delta_0 \otimes \delta_{-1})\right)
		\geq \inf\left\{ \int_0^1 \mathrm{Var}\left(\frac12(\pi_s(1) + \pi_s(-1))\right)\,ds \right\} 
		\\
		&\quad = \inf\left\{ \int_0^1 \frac12(\mathrm{Var}(\pi_s(1))+\mathrm{Var}(\pi_s(-1))) +  \dualbra{\frac{\pi_s(1) - \pi_s(-1)}{2}}{v}^2  \right\}
		\\
		&\quad \geq \inf \left\{ \int_0^1   \dualbra{\frac{\pi_s(1) - \pi_s(-1)}{2}}{v}^2 \, ds  \right\}
		\geq \inf\left\{\frac14 \left \langle \int_0^1\pi_s(1) - \pi_s(-1)\,ds, v \right \rangle^2 \right\}
		\\
		& \quad = 1,
	\end{aligned}	
	\]
	where we used the alternative characterization of $\Phi^\rel$ in Lemma \ref{lem:AltFormRelaxation}, the non-negativity of $\varphi$, the parallelogram identity for the variances, and the martingale property
	\begin{align*}
		\left\langle \int_0^1\pi_s(\pm 1)\,ds, v \right\rangle = \pm 1.
	\end{align*}
	Putting in $\pi_s(\pm 1) = \delta_{\pm 1}$ reveals that $\Phi^\rel(g) = 1$, $ g:=\frac{1}{2}(\delta_0 \otimes \delta_1 + \delta_0 \otimes \delta_{-1}) $, in particular that $\Phi^\rel$ is not convex. This yields (5) above.

	The same example also shows that in general $\Phi^\rel < \Phi^\inc$, cf. (6) above: $\Phi$ is increasing in the convex order since $v\mapsto \varphi(v)+|v|^2$ is convex. In particular, $\Phi^\inc(\delta_0 \otimes \delta_0)) = \varphi(0) = 1$ but $\Phi^\rel(\delta_0 \otimes \delta_0) \leq \frac{1}{2}(\Phi(\delta_0\otimes \delta_1) + \Phi(\delta_0\otimes \delta_{-1})) = 0$.
	
	Finally, we also obtain a counterexample for the first part of (8). We have above $\Phi^\rel(g) = 1$, $ g:=\frac{1}{2}(\delta_0 \otimes \delta_1 + \delta_0 \otimes \delta_{-1}) $. However, $ \Phi $ is increasing in the convex order and
	\begin{align*}
		\Phi^{\conv}(g) \leq \frac{1}{2}(\Phi(\delta_0\otimes \delta_1) + \Phi(\delta_0\otimes \delta_{-1})) = 0.
	\end{align*}
	Thus, the inequality $ \Phi^\rel \leq (\Phi^\inc)^{\conv} $ cannot hold in general.
\end{example}
\begin{example}
	Concerning (7) and the second part of (8) we consider the following example. Let $ \Phi(f)=\psi(\dualbra{f}{v}) $ with $ \psi(s):=\frac{1}{4}(s^2-1)^2 $. The function $\Phi$ clearly satisfies the growth and continuity conditions (A1) and (A2). It is increasing, i.e. $ \Phi=\Phi^\inc $ and we have $ \Phi^{\conv}(\delta_0 \otimes \delta_0)=\psi(0)=1 $. However, we observe that
	\begin{align*}
		\pi_1 = \delta_0 \otimes \delta_1, \quad \pi_{-1}=\delta_0 \otimes \delta_{-1}, \quad g:=\dfrac{1}{2}\left( \delta_0 \otimes \delta_1+\delta_0 \otimes \delta_{-1} \right) \succ \delta_0 \otimes \delta_0.
	\end{align*}
	We conclude that
	\begin{align*}
		0\leq \Phi^{\conv}(g) \leq \dfrac{1}{2}\left( \Phi^{\conv}(\delta_0 \otimes \delta_1) + \Phi^{\conv}(\delta_0 \otimes \delta_{-1})\right) =0 < 1= \Phi^{\conv}(\delta_0 \otimes \delta_0).
	\end{align*}
	Hence, $ \Phi^{\conv} $ is not increasing, yielding (7). 
	
	Finally, for the second part of (8) we observe that the latter counterexample yields similarly
	\begin{align*}
		0\leq \Phi^{\rel}(\delta_0 \otimes \delta_0) \leq  \dfrac{1}{2}\left( \Phi(\delta_0 \otimes \delta_1) + \Phi(\delta_0 \otimes \delta_{-1})\right) =0 < 1= \Phi^{\conv}(\delta_0 \otimes \delta_0).
	\end{align*}
	Thus, $ (\Phi^{\inc})^{\conv}\leq\Phi^\rel $ cannot hold in general. 
\end{example}

	\begin{proof}[Proof of Proposition \ref{prop:PropertiesRelaxation}]
		(1): To show the growth condition, choose an admissible kernel $\pi:\TM\times (0,1) \to \PM_p(\R^d_v)$ and estimate
		\[
		\int_0^1 \Phi(f\pi_s)\,ds \geq \int_0^1 \langle -C +c |v|^p, f\pi_s \rangle\,ds = \langle -C + c|v|^p,f \rangle,	
		\] 
		where we used the martingale property of $\pi$. Taking the infimum over all such $\pi$ yields $\Phi^\rel(f) \geq \langle-C +c |v|^p,f\rangle$. The upper bound follows from $\Phi^\rel \leq \Phi$.

		(2): To show the continuity, we use the \emph{coupling method}:

		Let $f,f'\in \PM_p(\TM)$. Choose an admissible Markov kernel $\pi:\TM\times (0,1) \to \PM_p(\R^d_v)$ with 
		\[
			\Phi^\rel(f)\leq \int_0^1 \Phi(f\pi_s)\,ds + \eps.
			\]

			Choose a coupling $\Gamma \in \PM_p(\R^d_x\times \R^d_v \times \R^d_{x'}\times \R^d_{v'})$ of $f$ and $f'$ with
			\[
				\langle \Gamma,|x-x'|^p + |v-v'|^p \rangle = W_p^p(f,f').
			\]

			We construct an admissible Markov kernel $\pi':\supp f'\times (0,1) \to \PM_p(\R^d_v)$: First, we find the Markov kernel $\Gamma':\supp f'\to \PM_p(\TM)$ obtained through disintegration of $\Gamma$ with respect to $f'$, i.e. $\Gamma'(dx,dv,x',v')f'(dx',dv') = \Gamma(dx,dv,dx',dv')$. Then define for $(x',v')\in \supp f'$
			\[
			\pi_s'(x',v'):= \int_{\TM} T_{v'-v}^\# \pi_s(x,v)	\Gamma'(dx,dv,x,v),
			\]
			where $T_{v'-v}:\R^d_v \to \R^d_v$ is the translation $v''\mapsto v''+v'-v$. We have to check that $\pi_s$ is indeed an admissible Markov kernel, i.e. that $\int_0^1 \int_{\R^d_{\tilde v}}\tilde v\,\pi_s'(x',v',d\tilde v) \,ds= v'$ for all $(x',v')\in \TM$.

			By Fubini's theorem
			\begin{align*}
				\int_0^1 \int_{\R^d_{\tilde v}} \tilde v \,\pi_s'(x',v',d\tilde v)\, ds	= & \int_0^1\int_{\R^d_{\tilde v}} \tilde v \int_{\TM} T^\#_{v'-v}\pi_s(x,v,d\tilde v) \Gamma'(dx,dv,x',v') \, ds
				\\
				= &  \int_{\TM} \left( \int_0^1 \int_{\R^d_{\tilde v}} (\tilde v + v' - v) \,\pi_s(x,v,d\tilde v) \,ds\right) \Gamma'(dx,dv,x',v')
				\\
				= & \int_{\TM}v'\,\Gamma'(dx,dv,x',v') = v'\,.
			\end{align*}
			This shows that indeed $\pi_s':\supp f' \to \PM_p(\TM)$ is an admissible kernel. We can extend $\pi_s'$ to $(\TM)\setminus \supp f'$ by taking e.g. $\pi_s'(x,v) = \delta_v$ for $(x,v)\in (\TM)\setminus \supp f'$.

			Now we continue by estimating the $W_p$ distance between $f\pi_s$ and $f'\pi_s'$: construct a coupling $\Gamma_s\in \PM(\TM\times \R^d_{x'}\times \R^d_{v'})$ between $f\pi_s$ and $f'\pi_s'$ through
			\[
			\langle \Gamma_s,\varphi \rangle:= \int_{\TM\times \R^d_{x'}\times \R^d_{v'}}\int_{\R^d_{\tilde v}}\varphi(x,\tilde v,x',\tilde v  + v' -v) \, \pi_s(x,v, d\tilde v)\,\Gamma(dx,dv,dx',dv') 
			\]
		    for $\varphi\in C_b(\TM\times \R^d_{x'}\times \R^d_{v'})$. Disintegrating $\Gamma$ shows that $\Gamma_s$ indeed defines a coupling between $f\pi_s$ and $f'\pi_s'$, allowing us to bound
			\begin{align*}
				&W_p^p(f\pi_s,f'\pi_s')
				\\
				&\leq \langle \Gamma_s, |x-x'|^p+|v-v'|^p \rangle\\
				&= \int_{\TM\times \R^d_{x'}\times \R^d_{v'}} \int_{\R^d_{\tilde v}} \left(  |x-x'|^p + |\tilde v - (\tilde v +v'-v)|^p\right)  \, \pi_s(x,v,d \tilde v)\,\Gamma(dx,dv,dx',dv')\\
				&= \int_{\TM\times \R^d_{x'}\times \R^d_{v'}}  \left(  |x-x'|^p + |v-v'|^p\right)  \, \Gamma(dx,dv,dx',dv')\\
				&= W_p^p(f,f').
			\end{align*}
			In other words, we have a $W_p$ contraction for almost every $s\in (0,1)$. Finally, we use property (A2) to estimate
			\begin{align*}
				\Phi^\rel(f') \leq & \int_0^1 \Phi(f'\pi_s')\,ds\\
				\leq & \int_0^1 \Phi(f\pi_s)\,ds + \int_0^1\langle  f\pi_s + f'\pi_s',  C+C|v|^p\rangle^{(p-1)/p} W_p(f\pi_s, f'\pi_s')\,ds\\
				\leq &\Phi^\rel(f) + \eps + C W_p(f,f')\left(\int_0^1\langle f\pi_s + f'\pi_s', 1+ |v|^p \rangle \,ds\right)^{(p-1)/p}.
			\end{align*}
			Now we simplify the last term:
			\[
			\langle f'\pi_s', |v|^p	\rangle \leq C\langle f\pi_s, |v|^p \rangle + CW_p^p(f\pi_s,f'\pi_s') \leq C \langle f\pi_s + f + f', |v|^p \rangle.
			\]

			We further use the growth conditions (A1) to estimate
			\[
				\begin{aligned}
				\int_0^1 \langle f\pi_s + f'\pi_s', 1 + |v|^p \rangle\,ds &\leq C \int_0^1 \langle f\pi_s + f + f', 1 + |v|^p \rangle \,ds\\
				\leq & C \int_0^1 \Phi(f\pi_s)\,ds + C\langle f + f', 1 + |v|^p \rangle\\
				\leq & C\Phi(f) + C\langle f + f', 1 + |v|^p \rangle\\
				\leq & \langle f + f', C + C|v|^p \rangle,
				\end{aligned}
			\]
			which yields (2).

			(3): Let $f,g \in \PM_p(\TM)$ with $g\succeq f$. Then by Strassen's theorem \cite{Strasse1965ExiProbMeas} there is a martingale kernel $\rho:\TM \to \PM(\R^d_v)$ with $g = f\rho$.
			
			Let $\pi: \TM \times (0,1) \to \PM(\R^d_v)$ be an admissible Markov kernel with $\Phi^\rel(g) \leq \int_0^1 \Phi(g\pi_s)\,ds + \eps$. Crucially, $\rho\pi_s$ is also an admissible Markov kernel.

			We check that
			\[
			\Phi^\rel(f) \leq \int_0^1 \Phi(f\rho\pi_s)\,ds = \int_0^1 \Phi(g\pi_s)\,ds \leq \Phi^\rel(g) + \eps.	
			\]
			
			Since $\eps>0$ was arbitrary, (3) follows.

			(4): First of all, we have the following inequalities
			\begin{align*}
				\Phi^\mathrm{conv}\leq \Phi \implies (\Phi^\mathrm{conv})^\inc\leq \Phi^\inc \implies (\Phi^\mathrm{conv})^\inc\leq (\Phi^\inc)^\mathrm{conv},
			\end{align*}
			where the last implication follows from the fact that $ (\Phi^\mathrm{conv})^\inc $ is convex. Indeed, let $ \Psi=\Phi^\mathrm{conv} $ and let $g\succeq f$ and $g'\succeq f'$. Then $(1-\lambda)g + \lambda g' \succeq (1-\lambda)f + \lambda f'$, since for a $v$-convex test function $\varphi:\TM\to \R$ we have
			\begin{align*}
				\langle (1-\lambda)g + \lambda g' , \varphi \rangle &= (1-\lambda)\langle g, \varphi \rangle + \lambda \langle g',\varphi\rangle 
				\\
				&\geq (1-\lambda) \langle f, \varphi \rangle + \lambda \langle f', \varphi \rangle = \langle (1-\lambda)f + \lambda f' , \varphi \rangle.
			\end{align*}
			Thus
			\begin{align*}
				\Psi^\inc((1-\lambda)f + \lambda f') \leq \Psi((1-\lambda)g + \lambda g') \leq (1-\lambda)\Psi(g) + \lambda \Psi(g').
			\end{align*}
			Minimizing over $g\succeq f$ and $g'\succeq f'$ yields convexity of $\Psi^\inc $.
			
			Furthermore, for a given convex $ \Psi $ we have $ (\Psi)^\inc=\Psi^\rel $. Indeed, for any admissible Markov kernel $\pi_s$ we have by Jensen's inequality
			\begin{align*}
				\int_0^1\Psi(f\pi_s)\,ds \geq \Psi\left(f\int_0^1 \pi_s\,ds\right).
			\end{align*}
			Thus we have that
			\begin{align*}
				\Psi^\rel(f) = \inf\{\Psi(f\pi)\,:\,\pi\in MK_p(\TM)\} = \inf\{\Psi(g)\,: \, g\succeq f\} = \Psi^\inc(f).	
			\end{align*}
			With this we infer by the monotony of the relaxation
			\begin{align*}
				\Phi^\mathrm{conv} \leq \Phi \implies (\Phi^\mathrm{conv})^\inc=(\Phi^\mathrm{conv})^\rel \leq \Phi^\rel \leq \Phi.
			\end{align*}
			This concludes the proof.
	\end{proof}

\section{Proof of the relaxation result}\label{sec: relaxation proof}

In this section we prove Theorem \ref{theorem: relaxation}. We first discuss the topology on $\PM_p(PS_T^p)$, showing that sequences of probabilities with bounded action are tight, before proving the relaxation theorem.

\subsection{Proof of compactness}

We show the following compactness result.
\begin{prop}\label{prop: compactness}
	Let $P^N\in \PM_p(PS_T^p)$ be a sequence with $\sup_{N\in\N} \langle P^N_0 , |x|^p \rangle< \infty$ and
	\begin{equation}
		\sup_{N\in\N} \int_0^T\langle P^N_{t,\dot t}, |v|^p \rangle\,dt < \infty.
	\end{equation}

	Then there is a subsequence (not relabeled) such that
	\begin{equation}
		\langle P^N, G \rangle \to \langle P,G \rangle
	\end{equation}
	for all $G:PS_T^p\to \R$ that are bounded and continuous with respect to the weak topology of $W^{1,p}([0,T];\R^d_x)$. We denote this convergence of probability measures by $P^N \weakstar P$. If $P^N \weakstar P$ then in particular $P^N_{s,t}\weakstar P_{s,t}$ in the narrow topology of $\PM(\R^d_x\times \R^d_x)$ for all $s,t\in[0,T]$.
\end{prop}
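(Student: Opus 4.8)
The plan is to establish tightness of $(P^N)$ in $\PM(PS_T^p)$ with respect to the weak $W^{1,p}$-topology via Prokhorov's theorem, then identify the limit's joint marginals. The key is that the moment bound $\sup_N \int_0^T \langle P^N_{t,\dot t}, |v|^p\rangle\,dt < \infty$ together with $\sup_N \langle P^N_0, |x|^p\rangle < \infty$ controls the $W^{1,p}$-norm in expectation: indeed, for $P^N$-a.e.\ path $\gamma$ we have $\int_0^T |\dot\gamma(t)|^p\,dt$ finite, and $\sup_{t}|\gamma(t)|^p \leq C(|\gamma(0)|^p + \int_0^T|\dot\gamma(t)|^p\,dt)$ by Hölder/the fundamental theorem of calculus. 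Taking $P^N$-expectation and using the definition of $P^N_{t,\dot t}$ gives $\E_{P^N}[\norm{\gamma}^p_{W^{1,p}}] \leq C$ uniformly in $N$. First I would make this precise.

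Next I would upgrade this moment bound to tightness. The obstacle is that bounded $W^{1,p}$-balls are not compact in the strong topology, but they \emph{are} compact (and metrizable) in the weak $W^{1,p}$-topology, since $W^{1,p}([0,T];\R^d)$ is reflexive for $p\in(1,\infty)$ and bounded sets are weakly sequentially compact; moreover on such balls the weak topology is metrizable (e.g.\ by a countable family of functionals), so closed balls $B_R = \{\gamma : \norm{\gamma}_{W^{1,p}} \leq R\}$ are compact metrizable subsets of $PS_T^p$. By Markov's inequality, $P^N(PS_T^p \setminus B_R) \leq C/R^p$ uniformly in $N$, so $(P^N)$ is tight. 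Prokhorov's theorem (valid here because $PS_T^p$ with the weak topology, restricted to where the measures concentrate, is a Polish-type space — or one argues on the $\sigma$-compact set $\bigcup_R B_R$) then yields a subsequence and a limit $P$ with $\langle P^N, G\rangle \to \langle P, G\rangle$ for all bounded $G$ continuous in the weak $W^{1,p}$-topology.

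Finally I would deduce narrow convergence of $P^N_{s,t}$. For fixed $s,t \in [0,T]$, the evaluation map $\gamma \mapsto (\gamma(s),\gamma(t))$ is continuous from $PS_T^p$ with the weak topology into $\R^d_x \times \R^d_x$ — this uses the compact (hence continuous) Sobolev embedding $W^{1,p}([0,T]) \hookrightarrow C^0([0,T])$, so weak $W^{1,p}$-convergence implies uniform convergence, in particular pointwise convergence at $s$ and $t$. Hence for any $\varphi \in C_b(\R^d_x\times\R^d_x)$, the map $\gamma \mapsto \varphi(\gamma(s),\gamma(t))$ is bounded and weakly continuous, so $\langle P^N_{s,t}, \varphi\rangle = \langle P^N, \varphi(\gamma(s),\gamma(t))\rangle \to \langle P, \varphi(\gamma(s),\gamma(t))\rangle = \langle P_{s,t},\varphi\rangle$, which is narrow convergence.

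The main obstacle I anticipate is the careful handling of the topology on $PS_T^p$: verifying that the weak $W^{1,p}$-topology restricted to bounded balls is metrizable and that those balls are compact, so that Prokhorov's theorem applies and tightness genuinely delivers a convergent subsequence. The moment estimates and the passage to joint marginals are routine once the topological framework is in place.
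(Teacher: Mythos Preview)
Your proposal is correct and reaches the same conclusion, but the route differs from the paper's in how the topological obstacle is handled.

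You work directly on $PS_T^p$ equipped with the weak $W^{1,p}$-topology: you observe that closed balls $B_R$ are weakly compact and metrizable (reflexivity plus separability), deduce tightness from Markov's inequality, and then invoke Prokhorov. As you yourself flag, the delicate point is that the weak topology on the whole space is not metrizable, so one needs either a version of Prokhorov for completely regular Hausdorff spaces or a diagonal argument over the increasing balls $B_R$. This can be done, but it is exactly the step that needs care.

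The paper sidesteps this by embedding $PS_T^p$ into the Polish space $L^p([0,T];\R^d_x)$, where the balls $B_R$ are compact by Rellich's lemma. Standard Prokhorov on $L^p$ gives a limit $P\in\PM(L^p)$, and the uniform tail bound $P(L^p\setminus B_R)\leq C/R^p$ forces $P$ to live on $PS_T^p$. To upgrade from convergence against strongly $L^p$-continuous test functions to convergence against weakly $W^{1,p}$-continuous $G$, the paper composes $G$ with the metric projection $P_R:L^p\to B_R$; the resulting $G_R$ is strongly $L^p$-continuous (because on $B_R$ strong $L^p$ and weak $W^{1,p}$ topologies agree), and the remainder $\langle P^N-P,\,G-G_R\rangle$ is controlled by $\|G\|_\infty/R^p$.

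In short: your approach is more direct but leans on a nonstandard Prokhorov; the paper's approach uses only the textbook Polish-space version at the price of the projection trick. The treatment of the marginals $P^N_{s,t}$ is identical in both.
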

The statement is complicated slightly because $W^{1,p}$ with the weak topology is not a metrizable Polish space. However, all the balls in $W^{1,p}$ are metrizable, which turns out to be enough. Note that $P^N\weakstar P$ does not imply $P^N_{t,\dot t}\weakstar P_{t,\dot t}$ for almost all $t\in[0,T]$.

\begin{proof}
First we show tightness of the probability measures $P^N$. Consider the separable Banach space $L^p([0,T];\R^d_x)$, which contains $PS_T^p$ as a dense subspace. The sets
\[
B_R := \{\gamma\in PS_T^p\,:\,|\gamma_0|^p + \int_0^T |\dot \gamma|^p\,dt\leq R^p\}	
\]
are compact in $L^p$ by the Rellich Lemma. By Markov's inequality,
\[
R^p P^N(L^p\setminus B_R) \leq 	\int_0^T\langle P^N_{t,\dot t}, |v|^p \rangle\,dt + \langle P^N_0, |x|^p\rangle \leq C
\]
for all $N\in\N$ and all $R>0$, so that the $P^N$ are tight in $\PM(L^p)$. It follows from Prokhorov's theorem that there exists a limit probability measure $P\in \PM(L^p)$ such that for every $G:L^p\to \R$ that is bounded and continuous in the strong $L^p$ topology, we have
\[
\langle P^N,G \rangle \to \langle P,G \rangle.	
\]

In addition, $\lim_{R\to\infty} P(L^p\setminus B_R) = 0$, so that $P\in \PM(PS_T^p)$. If $G:PS_T^p\to \R$ is continuous under the weak topology of $W^{1,p}$ and bounded, we define $G_R:L^p\to \R$ As $G_R = G\circ P_R$ where $P_R:L^p\to B_R$ is the projection onto the convex compact set $B_R$. Since $P_R:L^p\to L^p$ is continuous in the strong topology and $\gamma_k \rightharpoonup \gamma$ in $W^{1,p}$ implies $\gamma_k \to \gamma$ in $L^p$, it follows that $G_R:L^p\to \R$ is strongly continuous and of course bounded.

We can estimate
\[
|\langle P^N -P, G\rangle| \leq |\langle P^N - P , G_R\rangle| + |\langle P^N - P , G- G_R\rangle|.
\]

The first term converges to zero since $G_R:L^p\to\R$ is strongly continuous. The second can be bounded independently of $N$ by
\[
	|\langle P^N - P , G- G_R\rangle| \leq \max|G| \left( P^N(L^p\setminus B_R) + P(L^p\setminus B_R) \right) \leq \max|G|/R^p.
\]

This shows that
\[
\langle P^N,G \rangle \to \langle P,G \rangle.	
\]

Now fix $s,t\in[0,T]$. We have to show that $\langle P^N_{s,t},g\rangle \to  \langle P_{s,t},g\rangle$ for all $g\in C_b(\R^d_x\times \R^d_x)$. This is clear, since the functional $G:PS_T^p\to \R$, $G(\gamma):= g(\gamma(s),\gamma(t))$ is weakly continuous.
\end{proof}

\subsection{Proof of the lower bound}
In this subsection we show that $ F^\rel(P) $ is an asymptotic lower bound for $ F(P^k) $ as $ P^k\rightharpoonup P $.

\begin{prop}\label{prop:LowerBound}
	Whenever $ P^k\weakstar P \in \PM_p(PS_T^p)$ we have
	\begin{align}\label{eq:PropLowerBound}
		\liminf_{k\to \infty} F(P^k) \geq F^\rel (P).
	\end{align}
\end{prop}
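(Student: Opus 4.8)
The plan is to prove the $\liminf$ inequality by combining the compactness structure from Proposition \ref{prop: compactness} with the two alternative representations of $\Phi^\rel$ from Lemma \ref{lem:AltFormRelaxation}, in particular the vertical-path formula \eqref{eq: path relaxation}. The heuristic is that a minimizing sequence $P^k \weakstar P$ develops, at almost every time $t$, two kinds of defect in passing from $P^k_{t,\dot t}$ to $P_{t,\dot t}$: fast temporal oscillation of the velocities, and persistent ``noise'' in the velocity fibers. Both are captured by Young measures in time and velocity, and the content of the lower bound is that the resulting loss is exactly accounted for by optimizing over martingale kernels. Since $F^\rel$ is by definition the relaxation computed through $\Phi^\rel$, what one really needs is: for a.e. $t$, the limiting velocity statistics $P_{t,\dot t}$ together with the Young-measure profile is representable by an admissible martingale kernel applied to $P_{t,\dot t}$, and $\Phi$ evaluated along that profile averages to at least $\Phi^\rel(P_{t,\dot t})$.

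First I would set up the time-velocity Young measure. After extracting a subsequence (using the $p$-moment bound $\sup_k \int_0^T \langle P^k_{t,\dot t}, |v|^p\rangle\,dt < \infty$, which holds along a minimizing sequence by (A1)), I would consider the sequence of measures on $[0,T]\times \TM$ given by $dt \otimes P^k_{t,\dot t}$ and pass to a limit. The position marginals converge strongly by Proposition \ref{prop: compactness} (indeed $P^k_t \weakstar P_t$ narrowly for every $t$, and one upgrades this on $[0,T]\times \M$), so the position variable is not where the defect lives; the defect is a velocity Young measure $\nu_{t}(x,\cdot)$ over $[0,T]$, i.e. for a.e. $t$ a probability measure on $\TM$ with $x$-marginal $(P_t)$ (here one must be careful: the correct object is the disintegration, and one wants $\nu_t$ to be of the form $P_{t,\dot t}\,\pi_t$ for some Markov kernel $\pi_t$ in velocity). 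The martingale property $\int v'\,\pi_t(x,v,dv') = v$ for a.e. $(x,v)$ comes precisely from the fact that the paths in $PS_T^p$ are $W^{1,p}$ functions of time, so the time-averaged velocity along each path equals its actual velocity; more precisely, weak $W^{1,p}$ convergence $\gamma^k \rightharpoonup \gamma$ forces $\dot\gamma^k \rightharpoonup \dot\gamma$ in $L^p$, and averaging the Young measure in a short time window around $t$ recovers the unoscillated velocity $\dot\gamma(t)$ — this is the step that produces the martingale constraint and is, I expect, the main technical obstacle.

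Second, having identified the limiting profile, I would use lower semicontinuity of $\Phi$ along the Young measure: using the continuity estimate (A2) together with the linear $p$-growth (A1), $\Phi$ is continuous on $W_p$-bounded sets, so along a subsequence $\Phi(P^k_{t,\dot t})$ converges for a.e. $t$ to $\int \Phi(\cdot)$ integrated against the temporal Young measure, which by the vertical-path representation \eqref{eq: path relaxation} (or \eqref{eq:AltFormRelaxation}) is bounded below by $\Phi^\rel(P_{t,\dot t})$. Concretely: the temporal oscillation Young measure at time $t$ gives a measure $R^{(t)}\in\PM(TPS^p)$-type object (a time-1 rescaled family of velocity profiles with $R^{(t)}_1 = P_{t,\dot t}$ once the martingale property is in hand), so $\liminf_k \Phi(P^k_{t,\dot t}) \ge \Phi^\rel(P_{t,\dot t})$ for a.e. $t$. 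Then I would integrate in $t$: by Fatou's lemma (applied after shifting $\Phi$ by the nonnegative lower bound $\langle \cdot, c|v|^p - C\rangle$ from (A1) to make the integrand bounded below by an integrable function), $\liminf_k \int_0^T \Phi(P^k_{t,\dot t})\,dt \ge \int_0^T \liminf_k \Phi(P^k_{t,\dot t})\,dt \ge \int_0^T \Phi^\rel(P_{t,\dot t})\,dt = F^\rel(P)$, which is \eqref{eq:PropLowerBound}.

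The delicate points I would budget the most care for are: (a) constructing the joint time-velocity Young measure as a genuine Markov kernel $\pi_t$ in the velocity variable with the correct $x$-marginal — this requires disintegration and a measurable-selection argument to make $t\mapsto \pi_t$ jointly measurable and integrable; and (b) proving the martingale identity rigorously, which amounts to showing that the weak $W^{1,p}$ limit commutes with the time-average of the velocity Young measure, i.e. that no ``net drift'' is created by the oscillations — I would prove this by testing against $\varphi(x)\cdot v$ for $\varphi\in C_c^\infty$, writing $\int_0^T\langle P^k_{t,\dot t},\varphi(x)\cdot v\rangle\,dt = \int_{PS_T^p}\int_0^T \varphi(\gamma_t)\cdot\dot\gamma_t\,dt\,dP^k = \int_{PS_T^p} \frac{d}{dt}\big(\text{primitive}\big)\ldots$ type manipulation, passing to the limit using narrow convergence of $P^k_{s,t}$, and comparing with the same expression for the Young-measure limit. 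Localizing in time (replacing $[0,T]$ by small intervals $(t-\delta,t+\delta)$ and letting $\delta\to 0$) upgrades this to the pointwise-in-$t$ martingale property for a.e.\ $t$.
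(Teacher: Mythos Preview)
Your proposal captures the right intuition---the defect between $P^k_{t,\dot t}$ and $P_{t,\dot t}$ is a velocity Young measure whose time average must satisfy a martingale constraint coming from weak $W^{1,p}$ convergence---but the Fatou step has a genuine gap. You assert ``$\liminf_k \Phi(P^k_{t,\dot t}) \ge \Phi^\rel(P_{t,\dot t})$ for a.e.\ $t$'' and then integrate. This pointwise inequality is \emph{false}: take $\Phi(f)=\langle f,|v|^2\rangle$ (so $\Phi=\Phi^\rel$), $P=\delta_{t\mapsto t}$, and $P^k=\delta_{\gamma^k}$ with $\dot\gamma^k$ alternating between $0$ and $2$ on intervals of length $1/k$. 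Then $P^k\weakstar P$, $\Phi^\rel(P_{t,\dot t})=1$ for all $t$, but $\Phi(P^k_{t,\dot t})\in\{0,4\}$ and $\liminf_k\Phi(P^k_{t,\dot t})=0$ for a.e.\ $t$. Young measure convergence gives only weak-$L^1$ convergence of $t\mapsto\Phi(P^k_{t,\dot t})$, not pointwise convergence along a subsequence, so Fatou is unavailable. What you actually need is the weak limit $\tfrac12\cdot 0+\tfrac12\cdot 4=2\ge 1$, but extracting this requires the martingale structure of the entire \emph{time-parametrized family} of kernels on a small interval, not a single kernel at each fixed $t$.

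The paper avoids this by never seeking a pointwise-in-$t$ bound. It partitions $[0,T]$ into $N$ equal pieces and uses the one convergence that \emph{is} robust under $P^k\weakstar P$: the two-time couplings $P^k_{t_i^N,t_{i+1}^N}\weakstar P_{t_i^N,t_{i+1}^N}$ narrowly (Proposition~\ref{prop: compactness}). Rescaling each path $\gamma|_{[t_i^N,t_{i+1}^N]}$ to $[0,1]$ with frozen position $\gamma(t_i^N)$ produces a competitor in $\PM(TPS^p)$ for the vertical-path formula \eqref{eq: path relaxation}, with endpoint $f_i^{N,k}:=[(x,y)\mapsto(x,\tfrac{N}{T}(y-x))]_\#P^k_{t_i^N,t_{i+1}^N}$, yielding directly the interval estimate
\[
\frac{T}{N}\,\Phi^\rel(f_i^{N,k}) \le \int_{t_i^N}^{t_{i+1}^N}\Phi(P^k_{t,\dot t})\,dt + O\bigl((T/N)^{(p-1)/p}\bigr).
\]
Summing in $i$, sending $k\to\infty$ (using $f_i^{N,k}\to f_i^N$ in $W_p$ and the continuity of $\Phi^\rel$ from Proposition~\ref{prop:PropertiesRelaxation}), and then $N\to\infty$ (the Riemann sum $\tfrac{T}{N}\sum_i\Phi^\rel(f_i^N)$ recovers $\int_0^T\Phi^\rel(P_{t,\dot t})\,dt$ via a piecewise-affine approximation of $P$) gives the lower bound. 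Your localization idea in (b) is morally this step, but it must be carried out \emph{at the level of the interval action} and fed into \eqref{eq: path relaxation}, not converted into a pointwise statement and then integrated.
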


The proof uses a type of blow-up argument. Over short time intervals, curves in the support of the limit measure $P\in \PM_p(PS_T^p)$ are almost affine, thus their statistics $P_{t,\dot t}$ are almost determined by their couplings $P_{t,t+h}$. The need for relaxation arises precisely because as $P^k \weakstar P$, $P^k_{t,\dot t}$ does not converge narrowly to $P_{t,\dot t}$. However, $P^k_{t,t+h}$ does converge narrowly to $P_{t,t+h}$. By pushing the curves in the support of $P^k$ to the tangent space $TPS^p$ as in \eqref{eq: path relaxation}, we ensure that $P^k_{t,\dot t}$ are competitors to the relaxation for some statistics close to $P_{t,\dot t}$.

\begin{proof}
	We can assume that the limit $ \lim_{k\to \infty} F(P_k)<\infty $ exists and is finite.  We split the proof into several steps:

\emph{Step 1: Approximation by piecewise affine paths.} In this step, we replace all the curves in the support of $P$ by piecewise affine curves and ensure that the change in action is small.

Define for $N\in \N$ the equidistant partition of $[0,T]$ into $N$ intervals with endpoints $t_i^N:= \frac{Ti}{N}$, $i=0,\ldots,N$.
Define the projection $e^N:PS_T^p\to PS_T^{p,N}$ onto the piecewise affine curves through
\[
\begin{cases}
(e^N \gamma)(0) = \gamma(0) & \\
\frac{d}{dt}(e^N \gamma)(t) = \fint_{t_i^N}^{t_{i+1}^N}\dot \gamma(s)\,ds &\text{ for }t\in (t_i^N,t_{i+1}^N).
\end{cases}
\]
The space of such curves is denoted by $ PS_{T,N} $. Define $Q^N := (e_N)_\#P\in \PM_p(PS_T^p)$. Since $e_N\gamma \to \gamma$ strongly in $W^{1,p}([0,T];\R^d_x)$, we have by the dominated convergence theorem
\[
\lim_{N\to \infty}\int_0^T W_p^p(Q^N_{t,\dot t},P_{t,\dot t})\,dt \leq \lim_{N\to\infty} \E_P[\|e_N \gamma - \gamma\|_{W^{1,p}}^p] = 0,
\]
since $\E_P[\|\gamma\|_{W^{1,p}}^p]<\infty$. By the continuity of $\Phi^\rel$ in Proposition \ref{prop:PropertiesRelaxation}, it follows by H\"older's inequality that
\[
\begin{aligned}
&\lim_{N\to\infty} \int_0^T |\Phi^\rel(Q^N_{t,\dot t})-\Phi^\rel(P_{t,\dot t})|\,dt\\
 \leq & \lim_{N\to\infty} \int_0^T(\langle P_{t,\dot t} + Q^N_{t,\dot t},C+C |v|^p \rangle^{(p-1)/p} W_p(P_{t,\dot t},Q^N_{t,\dot t}))\,dt\\
 \leq & \lim_{N\to\infty} \left( \int_0^T \langle P_{t,\dot t},2C+2C|v|^p \rangle\,dt \right)^{(p-1)/p} \left(\int_0^T W_p^p (P_{t,\dot t},Q^N_{t,\dot t})\,dt\right)^{1/p}\\
 = & 0,
\end{aligned}
\]
where we used that $\int_0^T \langle Q^N_{t,\dot t},|v|^p \rangle\,dt \leq \int_0^T \langle P_{t,\dot t},|v|^p \rangle\,dt$. In short,
\begin{equation}\label{eq: piecewise}
	\lim_{N\to\infty} \int_0^T \Phi^\rel(Q^N_{t,\dot t})\,dt = \int_0^T \Phi^\rel(P_{t,\dot t})\,dt.
\end{equation}

\emph{Step 2: Approximation of the relaxed action by a Riemann sum.} We now show that we can approximate the action by a Riemann sum:
\begin{equation}\label{eq: Riemann sum}
\lim_{N\to\infty} \frac{T}{N} \sum_{i=0}^{N_1} \Phi^\rel(f^{N}_{i}) = \int_0^T \Phi^\rel(P_{t,\dot t})\,dt,
\end{equation}
where
\begin{equation}\label{eq:ProofLowerBoundInterpolation}
	f^{N}_{i} := \left[(x,y) \mapsto \left(x,\frac{N}{T}(y-x)\right)\right]_\# P_{t_i^N,t_{i+1}^N} \in \PM_p(\R^d_x\times \R^d_v).
\end{equation}

To show \eqref{eq: Riemann sum}, we work with the piecwise affine curves $Q^N$. In fact, since $Q_{t,\dot t}^N = [(x,v) \mapsto (x + (t-t_i^N)v,v)]_\# f_i^N$ for all $t\in (t_i^N,t_{i+1}^N)$, we have
\[
\sup_{t\in (t_i^N,t_{i+1}^N)} W_p^p(f_i^N, Q^N_{t,\dot t}) \leq \left(\frac{T}{N}\right)^{p-1} \langle f_i^N, |v|^p \rangle
\]
and consequently, using again the continuity of $\Phi^\rel$ from Proposition \ref{prop:PropertiesRelaxation},
\[
	\begin{aligned}
		&\left|\int_0^T \Phi^\rel(Q^N_{t,\dot t})\,dt - \frac{T}{N}\sum_{i=0}^{N-1}\Phi^\rel(f^N_i) \right|\\
		\leq & \frac{T}{N}\sum_{i=0}^{N-1} \langle f_i^N,2C + 2C|v|^p \rangle^{(p-1)/p} \left(\frac{T}{N}\right)^{(p-1)/p} \langle f_i^N, |v|^p \rangle^{1/p}\\
		\leq & \left(\frac{T}{N}\right)^{(p-1)/p} \int_0^T \langle P_{t,\dot t}, 2C + (2C+1)|v|^p \rangle\,dt.
	\end{aligned}
\]
Together with \eqref{eq: piecewise} we obtain \eqref{eq: Riemann sum}.

\emph{Step 3: $P^k$ as competitors.} We now introduce the sequence $P^k\weakstar P$. We note that for every fixed $N\in \N$ and $i=0,\ldots,N-1$, we have $P^k_{t_i^N,t_{i+1}^N} \weakstar P_{t_i^N,t_{i+1}^N}$, and
\[
f^{N,k}_i := \left[(x,y) \mapsto \left(x,\frac{N}{T}(y-x)\right)\right]_\# P^k_{t_i^N,t_{i+1}^N} \weakstar f^N_i.
\]

We shall show that
\begin{equation}\label{eq: pass to limit}
\frac{T}{N}	\Phi^\rel (f^{N,k}_i) \leq \int_{t_i^N}^{t_{i+1}^N} \Phi(P^k_{t,\dot t})\,dt + \left(\frac{T}{N}\right)^{(p-1)/p}\int_{t_i^N}^{t_{i+1}^N}\langle P^k_{t,\dot t}, C+C|v|^p \rangle\,dt.
\end{equation}
Once we manage to show \eqref{eq: pass to limit}, the proof of the lower bound is complete, since by summation
\[
	\begin{aligned}
\frac{T}{N}\sum_{i=0}^{N-1}\Phi^\rel(f^N_i)	 = & \lim_{k\to\infty} \frac{T}{N}\sum_{i=0}^{N-1}\Phi^\rel(f^{N,k}_i)\\
 \leq &  \liminf_{k\to \infty} \left\lbrace \int_0^T \Phi(P^k_{t,\dot t})\,dt  + \left(\frac{T}{N}\right)^{(p-1)/p} \int_0^T \langle P^k_{t,\dot t}, C+C|v|^p \rangle\,dt \right\rbrace .
	\end{aligned}
\]
Letting $N\to\infty$ together with \eqref{eq: Riemann sum} gives the desired lower bound.

Now to prove \eqref{eq: pass to limit}:
Fix $N\in \N$, $i=0,\ldots,N$.
We look for a competitor to the relaxation formula \eqref{eq: path relaxation} for the statistic $f^{N,k}_i\in \PM_p(\R^d_x\times \R^d_v)$. We find this competitor by turning the curves $\gamma : [t_i^N\to t_{i+1}^N]\to \R^d_x$ into curves $r\gamma\in TPS^p$
 using the map $r : PS^p_T \to TPS^p$,
\[
(r \gamma)(s) := \left(\gamma(t_i^N), \frac{\gamma(t_i^N + \frac{T}{N}s)}{\frac{T}{N}}\right).	
\]

Then $R^k:= r_\#P^k\in \PM_p(TPS^p)$ does indeed satisfy $R^k_1 = f^{N,k}_i$, so that by \eqref{eq: path relaxation} we have
\[
\Phi^\rel(f^{N,k}_i)\leq \int_0^1 \Phi(R^k_{s,\dot s})\,ds.	
\]

To show that $R^k_{s,\dot s}$ is close to $P^k_{t,\dot t}$ for $t(s):= t_i^N + \frac{Ts}{N}$, we estimate 
\[
\max_{s\in[0,1]} W_p^p(R^k_{s,\dot s}, P^k_{t,\dot t}|_{t=t(s)}) \leq \left(\frac{T}{N}\right)^{p-1} \fint_{t_i^N}^{t_{i+1}^N}\langle P^k_{t,\dot t}, |v|^p \rangle\,dt.
\]

With this, \eqref{eq: pass to limit} follows as in the previous estimates, this time using the continuity of $\Phi$ instead of $\Phi^\rel$.
\end{proof}

\subsection{Proof of the upper bound}
Finally, we prove the upper  bound in Theorem \ref{theorem: relaxation}, repeated here:
\begin{prop}\label{prop:UpperBound}
	Let $P\in \PM(PS_T^p)$. Then there is a sequence of measures $P^k\in \PM(PS_T^p)$ such that $P^k_{0,T} = P_{0,T}$, $P^k \weakstar P$, and
	\begin{align}\label{eq:PropUpperBound}
		\lim_{k\to \infty} F(P^k) = F^\rel (P).
	\end{align}
\end{prop}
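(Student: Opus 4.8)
The plan is to deduce this from the lower bound already proven: by Proposition \ref{prop:LowerBound} we have $\liminf_k F(P^k)\ge F^\rel(P)$ for \emph{every} sequence $P^k\weakstar P$, so it suffices to produce one sequence with $P^k_{0,T}=P_{0,T}$, $P^k\weakstar P$, and $\limsup_k F(P^k)\le F^\rel(P)$. We may assume $P\in\PM_p(PS_T^p)$, since if $\int_0^T\langle P_{t,\dot t},|v|^p\rangle\,dt=\infty$ then both $F(P)$ and $F^\rel(P)$ equal $+\infty$ by the lower bounds in (A1) and Proposition \ref{prop:PropertiesRelaxation}(1), and $P^k:=P$ works.

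The construction uses the vertical--path representation \eqref{eq: path relaxation} of $\Phi^\rel$. Fix $N\in\N$, set $h:=T/N$, $t^N_i:=ih$, and let $f^N_i=[(x,y)\mapsto(x,h^{-1}(y-x))]_\#P_{t^N_i,t^N_{i+1}}\in\PM_p(\TM)$ be the ``secant--velocity'' statistics of \eqref{eq:ProofLowerBoundInterpolation}, so the law of $(\gamma(t^N_i),h^{-1}(\gamma(t^N_{i+1})-\gamma(t^N_i)))$ under $P$ is $f^N_i$. For each of the finitely many $i$, I would pick by \eqref{eq: path relaxation} a near--optimal $R^{N,i}\in\PM(TPS^p)$ with $R^{N,i}_1=f^N_i$ and $\int_0^1\Phi(R^{N,i}_{s,\dot s})\,ds\le\Phi^\rel(f^N_i)+1/N$, and disintegrate it along the endpoint map $\sigma\mapsto\sigma(1)$, obtaining kernels $R^{N,i}_{(x,v)}$ supported on vertical paths with base point $x$ and $\sigma_v(1)=v$. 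Then I ``unfold'' the vertical paths into real time: given $\gamma\in PS_T^p$ and, for each $i$, a sample $\sigma^i\sim R^{N,i}_{(\gamma(t^N_i),\,h^{-1}(\gamma(t^N_{i+1})-\gamma(t^N_i)))}$, define $\gamma^N$ by $\gamma^N(0)=\gamma(0)$ and $\dot\gamma^N(t)=\dot\sigma^i_v\big(\tfrac{t-t^N_i}{h}\big)$ on $(t^N_i,t^N_{i+1})$. The endpoint identity $\sigma^i_v(1)=h^{-1}(\gamma(t^N_{i+1})-\gamma(t^N_i))$ forces $\gamma^N(t^N_i)=\gamma(t^N_i)$ for all $i$, so $\gamma^N\in PS_T^p$ is continuous with the same endpoints as $\gamma$; averaging $\delta_{\gamma^N}$ against $P$ and the disintegration kernels (Fubini, with joint measurability from the disintegration theorem) defines $P^N\in\PM(PS_T^p)$ with $P^N_{0,T}=P_{0,T}$.

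For the energy: since the base point of $\sigma^i$ is $\gamma(t^N_i)$, for $t\in(t^N_i,t^N_{i+1})$ and $s=(t-t^N_i)/h$ the measures $P^N_{t,\dot t}$ and $R^{N,i}_{s,\dot s}$ are coupled through $\sigma$ by the translation $x\mapsto x+h\sigma_v(s)$, giving $W_p^p(P^N_{t,\dot t},R^{N,i}_{s,\dot s})\le h^p\int_0^1\langle R^{N,i}_{s',\dot s'},|v|^p\rangle\,ds'=:h^pM_{N,i}$. From (A1) and the near--optimality, $M_{N,i}\le C(1+\langle f^N_i,|v|^p\rangle+1/N)$, while $h\sum_i\langle f^N_i,|v|^p\rangle\le\int_0^T\langle P_{t,\dot t},|v|^p\rangle\,dt$ by Jensen, so $h\sum_iM_{N,i}$ is bounded uniformly in $N$. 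Using the continuity property (A2) of $\Phi$ together with Hölder on each interval,
\[
\int_{t^N_i}^{t^N_{i+1}}\Phi(P^N_{t,\dot t})\,dt\le h\big(\Phi^\rel(f^N_i)+\tfrac1N\big)+C\,h^{1+1/p}(1+M_{N,i})^{(p-1)/p}M_{N,i}^{1/p},
\]
and after summing the error term is $\le C\,h^{1/p}\big(\sum_ih(1+M_{N,i})\big)^{(p-1)/p}\big(\sum_ihM_{N,i}\big)^{1/p}=O(N^{-1/p})$. Hence $F(P^N)\le h\sum_i\Phi^\rel(f^N_i)+T/N+O(N^{-1/p})$, and the Riemann--sum convergence \eqref{eq: Riemann sum} gives $\limsup_N F(P^N)\le\int_0^T\Phi^\rel(P_{t,\dot t})\,dt=F^\rel(P)$.

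Finally, to see $P^N\weakstar P$ I would compare $\gamma^N$ with the piecewise--affine interpolant $e^N\gamma$ from Step 1 of the proof of Proposition \ref{prop:LowerBound}: they agree at every node $t^N_i$, and on $(t^N_i,t^N_{i+1})$ one has $|\gamma^N(t)-e^N\gamma(t)|\le h(\|\dot\sigma^i_v\|_{L^p([0,1])}+|h^{-1}(\gamma(t^N_{i+1})-\gamma(t^N_i))|)$, whence $\E[\|\gamma^N-e^N\gamma\|_{C^0}^p]\le 2^{p-1}h^p\sum_i(M_{N,i}+\langle f^N_i,|v|^p\rangle)=O(N^{1-p})\to0$; combined with $e^N\gamma\to\gamma$ in $W^{1,p}$ (hence in $C^0$) in $L^p(P)$--mean this gives $\gamma^N\to\gamma$ in $C^0$ in probability, so all finite--dimensional marginals of $P^N$ converge to those of $P$, and the uniform bounds $\langle P^N_0,|x|^p\rangle=\langle P_0,|x|^p\rangle$, $\sup_N\int_0^T\langle P^N_{t,\dot t},|v|^p\rangle\,dt<\infty$ give equitightness via Proposition \ref{prop: compactness}; hence every weak$^*$ limit is $P$. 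With Proposition \ref{prop:LowerBound} this proves \eqref{eq:PropUpperBound}. I expect the main obstacle to be the tension between \eqref{eq: path relaxation}, where positions are frozen, and the requirement of honest continuous paths hitting $P_{0,T}$: this is resolved by refining the partition, which turns the spatial drift $h\sigma_v(s)$ into a genuinely lower--order $W_p$--perturbation controlled by the velocity $p$--moments from (A1); what remains is the measurability/Fubini bookkeeping needed to glue the sampled vertical pieces into a single $P^N$.
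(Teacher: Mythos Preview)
Your argument is correct and complete; it is a genuine alternative to the paper's construction.

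The paper works from the original finite--sum formula \eqref{eq:Relaxation}: for each $f^N_i$ it chooses weights $\lambda^N_{ij}$ and Markov kernels $\pi^N_{ij}$ with $\sum_j\lambda^N_{ij}\pi^N_{ij}$ martingale, represents each $\pi^N_{ij}$ by a random variable $X^N_{ij}:\TM\times\T^1\to\R^d_v$, subdivides $[t^N_i,t^N_{i+1}]$ further according to the $\lambda^N_{ij}$, and then sets $\dot\gamma^N(t)=X^N_{ij}(x^N_i,v^N_i,\tfrac{t-t^N_{ij}}{t^N_{i,j+1}-t^N_{ij}}+\theta)$ with a uniform torus shift $\theta\in\T^1$; averaging over $\theta$ is what forces the statistics on each sub--interval to equal $f^N_i\pi^N_{ij}$, and the martingale identity is what recovers the nodes $\gamma^N(t^N_i)=\gamma(t^N_i)$. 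You instead invoke the vertical--path formula \eqref{eq: path relaxation}: one near--optimal $R^{N,i}\in\PM(TPS^p)$ per interval, disintegrated over its endpoint, and the endpoint constraint $\sigma_v(1)=v^N_i$ \emph{is} the node--matching condition, so no sub--partition and no torus trick are needed. Your route is shorter and conceptually cleaner; the paper's is more explicit and does not presuppose Lemma \ref{lem:AltFormRelaxation}. Both reduce to the same three ingredients: the Riemann--sum identity \eqref{eq: Riemann sum}, the $W_p$--estimate $W_p^p(P^N_{t,\dot t},\,\text{competitor})\le h^{p-1}(\text{moments})$, and the $C^0$--closeness of $\gamma^N$ to the piecewise--affine interpolant for the weak$^*$ convergence.
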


\begin{proof}
For the proof, we modify the curves $\gamma\in \mathrm{supp} P \subset PS_T^p$, obtaining for every original curve a one-parameter family of curves $u^N_\theta\gamma\in PS_T^P$, $\theta \in \T^1 = \R/\Z$ with $u^N_\theta \gamma(t_i^N) = \gamma(t_i^N)$ for all $t_i^N = \frac{i}{N}T$.

We shall define the sequence $P^N \in \PM_p(PS_T^p)$ through
\begin{align}\label{eq:ProofUpperBoundDefPN}
	P^N := \E_P\left[\int_{\T^1}\delta_{u^N_\theta\gamma} \, d\theta\right].
\end{align}
We automatically obtain the boundary values $P^N_{0,T} = P_{0,T}$, and $P^N\weakstar P$ will hold as long as
\[
\sup_{N} \,  \E_{P}\int_{\T^1}\left[\int_0^T \left|\dfrac{d}{dt}u^N_\theta \gamma(t)\right|^p \, dt \,d\theta\right]	<\infty.
\]

The choice of $u^N_\theta \gamma$ must be such that
\[
\lim_{N\to\infty} 	\int_0^T \Phi(P^N_{t,\dot t})\,dt = \int_0^T \Phi^\rel(P_{t,\dot t})\,dt.
\]

\emph{Step 1: Approximation by piecewise affine curves.} This step is identical to Steps 1 and 2 in Proposition \ref{prop:LowerBound}. We obtain the measures $f_i^N\in \PM_p(\R^d_x\times \R^d_v)$ as in \eqref{eq:ProofLowerBoundInterpolation} such that
\begin{align}\label{eq:ProofUpperBoundLimitApproximation}
	\int_0^T \Phi^\rel(P_{t,\dot t})\,dt = \lim_{N\to\infty} \frac{T}{N} \sum_{i=0}^{N-1}\Phi^\rel(f_i^N).
\end{align}

\emph{Step 2: Choice of Markov kernels and modification of curves.} We choose by the definition of $\Phi^\rel$ \eqref{eq:Relaxation} for every $i=0,\ldots,N-1$ numbers $\lambda_{ij}^N\in[0,1]$ for $j=1,\ldots,M_i$ and Markov kernels $\pi_{ij}^N :\R^d_x\times \R^d_v \to \PM_p(\R^d_v)$ such that $\sum_{j=1}^{M_i} \lambda_{ij}^N = 1$, $\sum_{j=1}^{M_i} \lambda_{ij}^N\pi_{ij}^N$ is a martingale kernel, and
\begin{equation}\label{eq:ProofUpperBoundDiscreteApproximation}
	\frac{T}{N}\sum_{i=0}^{N-1} \sum_{j=1}^{M_i} \lambda_{ij}^N \Phi(f\pi_{ij}^N) \leq \frac{T}{N}\sum_{i=0}^{N-1} \Phi^\rel(f_i^N) + \frac{1}{N}.
\end{equation}

We represent each $\pi_{ij}^N$ through a random variable $X_{ij}^N:\R^d_x\times \R^d_v \times \T^1 \to \R^d_v$ such that
\begin{equation}
\int_{\T^1} \delta_{X_{ij}^N(x_0,v_0,\theta)}\,d\theta = \pi_{ij}^N(x_0,v_0)
\end{equation}
for all $x_0\in \R^d_x$, $v_0\in \R^d_v$, $i=0,\ldots,N$, $j=1,\ldots,M_i$. Note that we do not require independence.

This allows us to define the map $u^N:\T^1 \times PS_{T,N} \to PS_T^p$ defined through
\begin{equation}
	\begin{cases}
		u^N_\theta \gamma(0) = \gamma(0)& \\
		\frac{d}{dt}u^N_\theta \gamma(t) := X_{ij}^N\left(x_i^N,v_i^N,\frac{t-t_{ij}^N}{t_{i,j+1}^N-t_{i,j}^N} +\theta\right), &\text{ for }t\in(t_{ij}^N, t_{i,j+1}^N),
	\end{cases}
\end{equation}
where $x_i^N := \gamma(t_i^N)$, $v_i^N:= \frac{N}{T}(\gamma(t_{i+1}^N)-\gamma(t_i^N))$, $t_{i0}^N = t_i^N$, and $t_{i,j+1}^N = t_{i,j}^N + \lambda_{ij}^N\frac{T}{N}$. This choice ensures that for every $\gamma\in PS_T^p$ and every $\theta\in \T^1$, we have
\[
	u^N_\theta \gamma(t_{ij}^N) - u^N_\theta \gamma (t_{i,j-1}^N) = \lambda_{ij}^N \frac{T}{N} \int_{\T^1} X_{ij}(x_i^N,v_i^N,\theta)\,d\theta  =  \lambda_{ij}^N\frac{T}{N}\langle \pi_{ij}^N(x_i^N,v_i^N), v \rangle.
\]

Summing up over all $j=1,\ldots,M_i$ and using the martingale property yields
\[
u^N_\theta \gamma(t_{i+1}^N) - u^N_\theta\gamma(t_i^N) = \frac{T}{N} \sum_{j=1}^{M_i} \lambda_{ij}^M \langle \pi_{ij}^N(x_i^N,v_i^N), v \rangle = \frac{T}{N}v_i^N = \gamma(t_{i+1}^N) - \gamma(t_i^N).
\]
Let us mention that the estimate \eqref{eq: p moment} below shows that the defined paths are indeed in $ PS^p_T $. The above construction is schematically shown in Figure \ref{fig:pathsproof}. With the mappings $ u^N_\theta $ we can define measures $ P^N $ via \eqref{eq:ProofUpperBoundDefPN}.
\begin{figure}[!ht]
	\centering
	\includegraphics[width=0.9\linewidth]{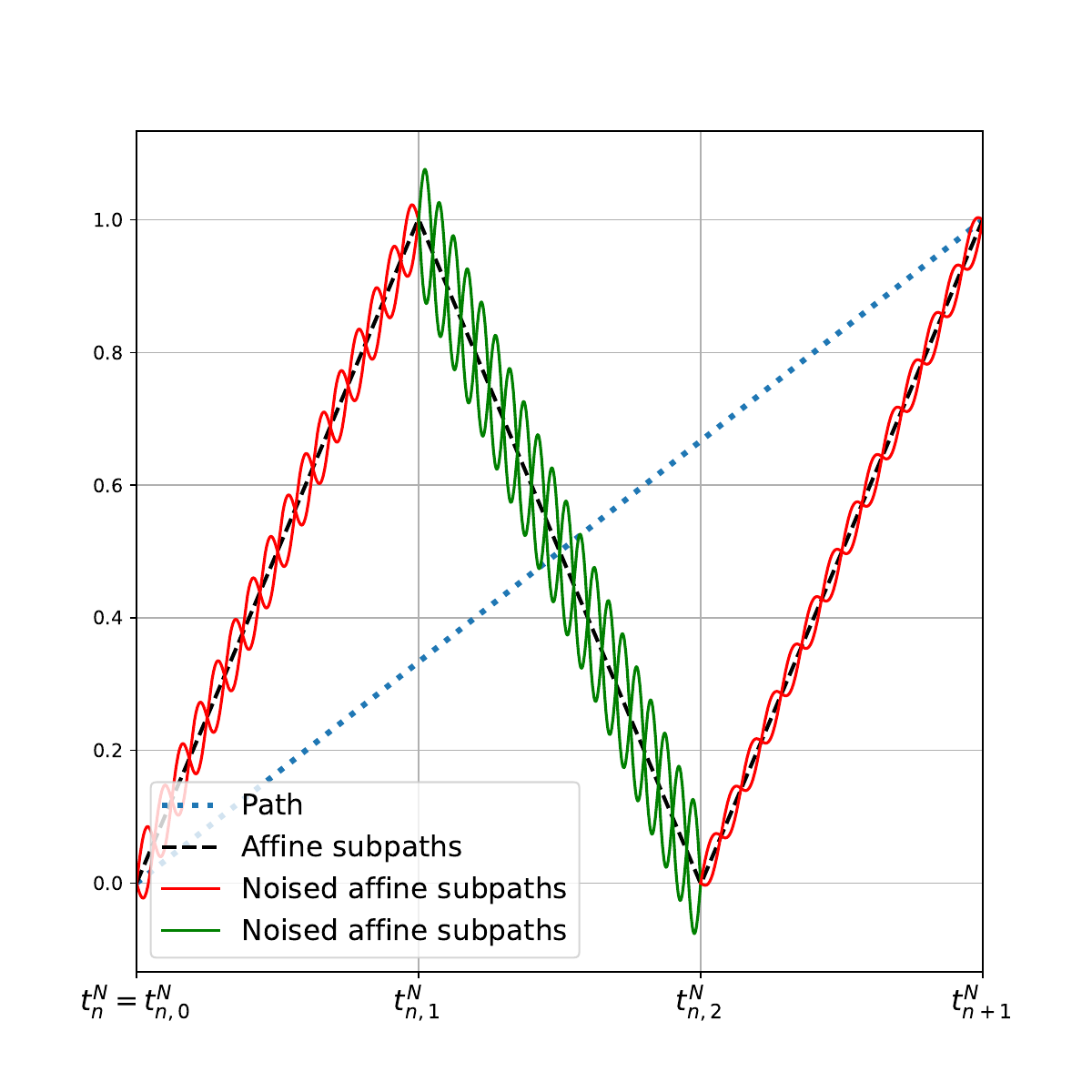}
	\caption{Schematic picture of paths constructed in Step 2 of Proof of Proposition \ref{prop:UpperBound}. One path drawn from the linear approximation $ P^N $ of the measure $ P $ is approximated by affine ``subpaths''. These ``subpaths'' are then over-layered by noise drawn from the martingale kernels on the subintervals $ [t^N_{n,i},t^N_{n,i+1}] $. Each ``subpath'' is given by the mean of the noise.}
	\label{fig:pathsproof}
\end{figure}

We finish this step by showing that $P^N \weakstar P$, which follows from the fact that
\begin{equation}\label{eq: p moment}
	\begin{aligned}
\E_P\left[\int_0^T \left|\dfrac{d}{dt}u^N_\theta\gamma(t)\right|^p \, dt\right ]  = & \frac{T}{N}\sum_{i=0}^{N-1} \sum_{j=1}^{M_i} \lambda_{ij}^N \langle f\pi_{ij}^N, |v|^p	\rangle \\
\leq CT+C\frac{T}{N}\sum_{i=1}^N\Phi^\rel(f_i^N) \leq & \int_0^T \langle P_{t,\dot t},C+C|v|^p \rangle\,dt 
	\end{aligned}
\end{equation}
by the growth condition (A1) of $\Phi$ and the same growth condition of $\Phi^\rel$, which holds by Proposition \ref{prop:PropertiesRelaxation}.

\emph{Step 3: Upper bound of the action.} We have to show that \begin{align}\label{eq:ProofUpperBoundAction}
	\limsup_{N\to\infty} \int_0^T \Phi(P^N_{t,\dot t})\,dt \leq \int_0^T \Phi^\rel(P_{t,\dot t})\,dt
\end{align}
With Proposition \ref{prop:LowerBound} this concludes the proof. In order to do so, we estimate
\begin{equation}\label{eq: modification wasserstein}
	W_p^p(P^N_{t,\dot t},f_i^N\pi_{ij}^N)\leq \left(\frac{T}{N}\right)^{p-1} \int_0^T \langle P_{t,\dot t}, C+C|v|^p \rangle\,dt 
\end{equation}
for all $t\in (t_{i,j-1}^N,t_{ij}^N)$. If \eqref{eq: modification wasserstein} holds, we can then use the continuity (A2) and growth condition (A1) of $\Phi$ to estimate
\begin{equation}
\sum_{i=0}^{N-1} \sum_{j=1}^{M_i}\int_{t_{i,j-1}^N}^{t_{ij}^N} |\Phi(P^N_{t,\dot t}) -\Phi(f_i^N\pi_{ij}^N)|\,dt \leq \left(\frac{T}{N}\right)^{(p-1)/p}\int_0^T \langle P_{t,\dot t}, C+C|v|^p \rangle\,dt.
\end{equation}
Due to \eqref{eq:ProofUpperBoundLimitApproximation} and \eqref{eq:ProofUpperBoundDiscreteApproximation} this proves the statement \eqref{eq:ProofUpperBoundAction}.

Now to prove \eqref{eq: modification wasserstein}: We can write
\begin{align*}
	f_i^N\pi_{ij}^N &= \int_{\T^1} \E_P[\delta_{(x_i^N, X_{ij}^N(x_i^N,v_i^N,\theta))}]\,d\theta
	\\
	P^N_{t,\dot t} &= \int_{\T^1}\E_P [\delta_{(u^N_\theta\gamma (t), X_{ij}^N(x_i^N,v_i^N,\theta))}]\,d\theta
\end{align*}
Thus we may estimate
\begin{equation}
	\begin{aligned}
W_p^p(f_i^N\pi_{ij}^N, P^N_{t,\dot t} ) \leq &\int_{\T^1} \E_P[|x_i^N - u^N_\theta\gamma (t)|^p]\,d\theta \\
\leq& \left(\frac{T}{N}\right)^{p-1} \int_{\T^1}\E_p\left[\int_0^T |\dot \gamma|^p + \left|\dfrac{d}{dt}u^N_\theta \gamma(t) \right|^p \,dt\right]\,d\theta\\
\leq & \left(\frac{T}{N}\right)^{p-1}\int_0^T \langle P_{t,\dot t}, C+C|v|^p\rangle \,dt,
	\end{aligned}
\end{equation}
where we used \eqref{eq: p moment}. This completes the proof of the upper bound.
\end{proof}

\section{The limit of the $N$-particle problem}\label{sec:NParticleProblem}
In this section, we prove Theorem \ref{theorem: N body}.
\begin{proof}[Proof of Theorem \ref{theorem: N body}]
	A minimizer $P^N = \frac1N \sum_{i=1}^N \delta_{x^i}\in \PM(PS_T^p)$ to the $N$-body problem exists by the direct method of the calculus of variations.
	By Theorem \ref{theorem: relaxation} and Proposition \ref{prop: compactness}, any sequence of minimizers has a subsequence converging to some $P\in \PM(PS_T^p)$ with $F^\rel(P) \leq \liminf_{N\to\infty} F(P^N)$.

	We have to show that there is $P^N =  \frac1N \sum_{i=1}^N \delta_{x^i}\in \PM(PS_T^p)$ with $P^N_{0,T} = \pi^N$ and  $\limsup_{N\to\infty} F(P^N) \leq F^\rel(P)$. To see this, first approximate $P$ by a sequence $Q^k \weakstar P$ such that $\lim_{k\to\infty} F(Q^k) = F^\rel(P)$. Choose a minimizer $Q^{k,N} = \frac1N \sum_{i=1}^N \delta_{x^i}\in \PM(PS_T^p)$ to the semidiscrete transport problem on the path space, i.e.
	\[
	Q^{k,N}\in\arg\min_Q	\left\lbrace W_{p,\|\cdot\|_{W^{1,p}}}^p(Q,Q^k)\,:\,Q = \dfrac{1}{N}\sum_{i=1}^N \delta_{x_i}\in \PM(PS^P_p) \right\rbrace ,
	\]
	where
	\[
		\begin{aligned}
	W_{p,\|\cdot\|_{W^{1,p}}}^p(Q,Q^k) := &\inf \left\lbrace \int_{PS^T_p \times PS^T_p} \int_0^T \left( |x(t)-y(t)|^p + |\dot x(t) - \dot y(t)|^p \right)  \, dt\gamma(dx,dy) \,: \right. 
	\\
	&\qquad \left. \gamma\in \PM(PS^T_p \times PS^T_p)\text{ is a coupling between }Q,Q^k \right\rbrace 	\end{aligned}
	\]
	is the $p$-Wasserstein distance in the strong $W^{1,p}$-norm of path space. By optimal transport theory \cite{ambrosio2013user}, we have
	\[
	\lim_{N\to\infty} \int_0^1 W_p^p(Q^{k,N}_{t,\dot t}, Q^{k}_{t,\dot t})\,dt \leq \lim_{N\to\infty} 	W_{p,\|\cdot\|_{W^{1,p}}}^p(Q^{k,N},Q^k) = 0,
	\]
	so that $Q^{k,N} \weakstar Q^k$ and by (A2) $\lim_{N\to \infty}F(Q^{k,N}) = F(Q^k)$	for every $k\in\N$. Taking a diagonal sequence $k(N)\to\infty$, we can assure that $Q^{k(N),N}\weakstar P$ and $\lim_{N\to\infty} F(Q^{k(N),N}) = F^\rel(P)$. In order to also realize the boundary values $P^N_{0,T} = \pi^N$, we realize that
	\begin{align*}
		\lim_{N\to\infty}W_p^p(Q^{k(N),N}_{0,T},\pi^N) = 0.
	\end{align*}
	Solving the assignment problem between these two empirical measures yields a transport map $S^N:\mathrm{supp}(Q^{k(N),N}_{0,T}) \to \mathrm{supp}(\pi^N)$, which can be factored into two maps $S_0^N,S_T^N:\R^d_x \to \R^d_x$. Then take $P^N := \frac1N \sum_{i=1}^N \tilde x_i$, where
	\[
	\tilde x_i(t) := x_i(t) + (S_0^N(x_i(0))-S_0^N(x_i(0))) + \frac{t}{T}((S_T^N(x_i(T) - S_0^N(x_i(0)))-(x_i(T) - x_i(0)))).	
	\]

	We check that $P^N_{0,T} = S^N_\# Q^{k(N),N}_{0,T} = \pi^N$, $P^N \weakstar P$, and $\lim_{N\to\infty} F(P^N) = F^{\rel}(P)$.
\end{proof}

\section{The interacting particle optimal transport problem}\label{sec:InteractingOTP}

Lastly we consider the interacting particle optimal transport problem, where instead of prescribing a coupling $\Gamma_b\in \PM_p(\R^d_x\times \R^d_x)$, we only prescribe the initial and final distributions $\mu_0,\mu_T\in \PM_p(\R^d_x)$ and minimize the action
\[
E_T(\mu_0,\mu_T) := \inf\left\lbrace \int_0^T \Phi(P_{t,\dot t})\,dt\,:\,P_0 = \mu_0,P_T = \mu_T\right\rbrace .	
\]
By Theorem \ref{theorem: relaxation}, we can rewrite $E_T$ as
\[
E_T(\mu_0,\mu_T) = \min\left\lbrace 	\int_0^T \Phi^\rel(P_{t,\dot t})\,dt\,:\,P_0 = \mu_0,P_T = \mu_T\right\rbrace .	
\]
We claim that $E_T$ has the following Eulerian version.
\begin{theorem}\label{theorem: optimal transport}
Assume that $\Phi$ satisfies (A1) and (A2). Define for a mass density $\rho\in \PM(\R^d_x)$ and a Borel-measurable velocity field $V:\R^d_x\to \R^d_v$ the particle statistics $f(\rho,V):= E_\rho[\delta_{(x,V(x))}]\in \PM(\TM)$ and the energy $\Psi(\rho,V):= \Phi^\rel(f(\rho,V))$. Then  $\Psi$ satisfies the growth and continuity conditions
\begin{itemize}
	\item [\textbf{(D1)}] $\displaystyle{\dualbra{\rho}{-C+c |V|^p} \leq \Psi(\rho,V) \leq \dualbra{\rho}{C+C |V|^p}}$
	\\ 
	for all $\rho\in \PM(\R^d_x)$, $V:\R^d_x\to\R^d_v$ Borel-measurable.
	\vspace*{0.2cm}
	\item [\textbf{(D2)}] $|\Psi(\rho,V) - \Psi(\rho,V')| \leq \dualbra{\rho}{C+C|V|^p+C|V'|^p}^{(p-1)/p}  \langle \rho, |V-V'|^p \rangle ^{1/p}$ 
	\\ 
	for all $\rho\in \PM(\R^d_x)$ and all $V,V':\R^d_x \to \R^d_v$ Borel-measurable.
\end{itemize}

In addition, for all $\mu_0,\mu_1\in \PM_p(\R^d_x)$ and all $T>0$ we have the Eulerian representation
\begin{equation}\label{eq: Eulerian}
	E_T(\mu_0,\mu_T) = \min\left\{ \int_0^T \Psi(\rho_t,V_t)\,dt\,:\,(\rho,V)\in \mathcal{CE}^p_T(\R^d_x), \rho_0 = \mu_0,\rho_T = \mu_T\right\},
\end{equation}
where $\mathcal{CE}^p_T(\R^d_x)$ denotes all Eulerian solutions to the continuity equation
\begin{equation}
	\begin{aligned}
\mathcal{CE}^p_T(\R^d) := \{(\rho,V)\,&:\,\rho\in \mathcal{M}_+((0,T)\times \R^d_x), V:(0,T)\times \R^d_x \to \R^d_v \text{ Borel-measurable}, \\
&\langle\rho,|V|^p\rangle < \infty, \partial_t \rho + \dive (V\rho) = 0\text{ in }\mathcal{D'}((0,T)\times \R^d_x)\}.
	\end{aligned}
\end{equation}
\end{theorem}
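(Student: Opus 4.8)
The plan is to establish the properties of $\Psi$ directly from those of $\Phi^\rel$, then prove the Eulerian representation \eqref{eq: Eulerian} by showing two inequalities, each realized by an explicit construction that converts between probability measures on the path space $PS_T^p$ and solutions of the continuity equation.

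First I would verify (D1) and (D2). Property (D1) is immediate: since $f(\rho,V) = E_\rho[\delta_{(x,V(x))}]$, we have $\langle f(\rho,V), g(x,v)\rangle = \langle \rho, g(x,V(x))\rangle$ for any test function, so applying Proposition \ref{prop:PropertiesRelaxation}(1) with $g(x,v) = \pm C + $ (appropriate $|v|^p$ term) and substituting $v = V(x)$ gives (D1). For (D2), given $V, V'$ the obvious coupling between $f(\rho,V)$ and $f(\rho,V')$ is the one induced by $\rho$ via $x \mapsto (x, V(x), x, V'(x))$, which shows $W_p^p(f(\rho,V), f(\rho,V')) \le \langle \rho, |V-V'|^p\rangle$; combining with the continuity estimate in Proposition \ref{prop:PropertiesRelaxation}(2) and the identity $\langle f(\rho,V) + f(\rho,V'), C + C|v|^p\rangle = \langle \rho, 2C + C|V|^p + C|V'|^p\rangle$ yields (D2).

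For the representation \eqref{eq: Eulerian}, I would prove ``$\le$'' and ``$\ge$'' separately. For ``$\le$'': given an admissible $(\rho, V) \in \mathcal{CE}^p_T$ with $\rho_0 = \mu_0$, $\rho_T = \mu_T$, the superposition principle for the continuity equation (Ambrosio--Gigli--Savaré, as in \cite{ambrosio2005gradient}, or \cite{ambrosio2013user}) provides a probability measure $P \in \PM(PS_T^p)$ concentrated on absolutely continuous integral curves of $V$, with $P_t = \rho_t$ and $\dot\gamma(t) = V_t(\gamma(t))$ for $P$-a.e.\ $\gamma$ and a.e.\ $t$. Then $P_{t,\dot t} = f(\rho_t, V_t)$ for a.e.\ $t$, so $\int_0^T \Phi^\rel(P_{t,\dot t})\,dt = \int_0^T \Psi(\rho_t, V_t)\,dt$, and since $P_0 = \mu_0$, $P_T = \mu_T$ this is an admissible competitor for $E_T(\mu_0,\mu_T)$ in the relaxed formulation; taking the infimum gives the bound. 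For ``$\ge$'': given an optimal $P$ for the relaxed problem $\min\{\int_0^T \Phi^\rel(P_{t,\dot t})\,dt : P_0 = \mu_0, P_T = \mu_T\}$ (which exists by Corollary \ref{cor: minimizer} applied to the coupling $P_{0,T}$), I would use the path relaxation formula \eqref{eq: path relaxation} together with the fact that for each fixed $t$ the statistic $P_{t,\dot t}$ can be replaced — without increasing $\Phi^\rel$, but at the cost of adding extra vertical ``martingale'' motion — by a statistic of the form $f(\rho_t, V_t)$ where $V_t$ is the barycentric velocity field $V_t(x) = E_P[\dot\gamma(t) \mid \gamma(t) = x]$ and $\rho_t = P_t$. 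Indeed, by the martingale property the push-forward of $P_{t,\dot t}$ under $(x,v)\mapsto(x, E_P[v|x])$ is dominated by $P_{t,\dot t}$ in the $v$-convex order, and $\Phi^\rel$ is increasing in that order (Proposition \ref{prop:PropertiesRelaxation}(3)), so $\Psi(\rho_t, V_t) = \Phi^\rel(f(\rho_t,V_t)) \le \Phi^\rel(P_{t,\dot t})$. The pair $(\rho_t, V_t)$ solves the continuity equation distributionally because $\partial_t P_t + \dive(V_t \rho_t) = 0$ follows from testing the definition of $P_{t,\dot t}$ against $\nabla_x\varphi(t,x)$, and the $p$-moment bound on $\langle \rho, |V|^p\rangle$ follows from Jensen's inequality and (A1). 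Integrating over $t$ gives $\int_0^T \Psi(\rho_t, V_t)\,dt \le \int_0^T \Phi^\rel(P_{t,\dot t})\,dt = E_T(\mu_0,\mu_T)$, and since $(\rho,V)$ is admissible on the right-hand side of \eqref{eq: Eulerian}, the minimum there is $\le E_T(\mu_0,\mu_T)$.

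The main obstacle is the ``$\ge$'' direction, specifically the measurability and regularity bookkeeping in disintegrating the optimal $P$ in time to extract a jointly measurable velocity field $V:(0,T)\times \R^d_x \to \R^d_v$ from the conditional expectations $E_P[\dot\gamma(t)\mid\gamma(t)=x]$, and verifying that the resulting $(\rho,V)$ genuinely lies in $\mathcal{CE}^p_T$ rather than just satisfying the continuity equation for a.e.\ fixed $t$. One must be careful that $t\mapsto \dot\gamma(t)$ is not pointwise defined, so the field $V_t$ is only defined for a.e.\ $t$ via the $P_{t,\dot t}$ construction of Section \ref{subsec:Notation}; invoking the superposition principle in the other direction (or Fubini together with the distributional continuity equation) is what makes this rigorous. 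A secondary technical point is ensuring the infimum in \eqref{eq: Eulerian} is attained — this follows from the ``$\ge$'' construction applied to an optimal $P$, which produces an admissible minimizing $(\rho,V)$, so ``$\min$'' is justified.
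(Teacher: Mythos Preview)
Your proposal is correct and follows essentially the same route as the paper: (D1)--(D2) are read off from Proposition \ref{prop:PropertiesRelaxation}, the inequality $E_T \leq \min$ comes from the superposition principle turning $(\rho,V)$ into a path measure $P$ with $P_{t,\dot t}=f(\rho_t,V_t)$, and the inequality $E_T \geq \min$ comes from taking an admissible $P$, defining the barycentric field $V_t(x)=E_P[\dot\gamma(t)\mid\gamma(t)=x]$, and using that $\Phi^\rel$ is increasing in the $v$-convex order. The reference to the path relaxation formula \eqref{eq: path relaxation} in your ``$\geq$'' argument is superfluous --- the convex-order monotonicity alone does the job, as you yourself then explain --- but otherwise your outline matches the paper's proof in structure and detail.
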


Before we prove the Eulerian representation \eqref{eq: Eulerian}, let us observe that $\int_0^T \Psi(\rho_t,V_t)\,dt$ admits only particle statistics  where the velocity at almost every $x\in \R^d_x$ is constant, while the general formula $\int_0^T \Phi^\rel(P_{t,\dot t})\,dt$ clearly allows for particles of different velocities at every point.

However, since $\Phi^\rel$ is increasing in the convex order, replacing any particle statistics $f\in \PM(\TM)$ with its Eulerian collapse $\rho = f_x, V(x) = E_f[v|x]$ yields $\Psi(\rho,V)\leq \Phi^\rel(f)$.

Also note that all pairs $(\rho,V)\in \mathcal{CE}^p_T$ form H\"older-continuous curves $t\mapsto \rho_t$ in the Wasserstein space $(\PM_p(\R^d_x),W_p)$, with 
\begin{align*}
	W_p(\rho_{t_0},\rho_{t_1}) \leq (t_1-t_0)^{(p-1)/p}\left(\int_{t_0}^{t_1} \int_{\R^d_x}|V_t(x)|^p \rho_t(dx)\,dt\right)^{1/p}.
\end{align*}
A proof of the H\"older-continuity is found in e.g. \cite[Proposition 2.30]{ambrosio2013user}. It allows us to make sense of the boundary values $\rho_0 = \mu_0, \rho_T = \mu_T$.
\begin{proof}
The growth and continuity conditions (D1), (D2) follow immediately from (A1), (A2). We first show the inequality 
\[
E_T(\mu_0,\mu_T) \geq \min\left\{ \int_0^T \Psi(\rho_t,V_t)\,dt\,:\,(\rho,V)\in \mathcal{CE}^p_T(\R^d_x), \rho_0 = \mu_0,\rho_T = \mu_T\right\}
\]

To do so, take any candidate $P\in \PM(PS^T_p)$ with $P_0 =\mu_0,P_T = \mu_T$ and define $\rho_t = P_t$, $V_t(x) = E_P[\dot x(t)|x(t)=x]$ for $\rho_t$-almost every $x\in \R^d_x$, and extend $V_t$ by zero outside of $\supp(\rho_t)$. As discussed earlier, we have $P_{t,\dot t} \succeq  f(\rho_t,V_t) $ in the convex order, so that $\Psi(\rho_t,V_t) \leq \Phi^\rel(P_{t,\dot t})$ for almost every $t\in(0,T)$. We show that $(\rho,V)\in \mathcal{CE}^p_T$. First,
\[
\langle \rho,-C+c |V|^p	\rangle \leq \int_0^T \Psi(\rho_t,V_t)\,dt <\infty.
\]

Second, take any test function $\varphi\in C_c^\infty((0,T)\times \R^d_x)$. Then 
\[
\begin{aligned}
	\int_0^T \int_{\R^d_x} \partial_t \varphi(t,x)  \rho_t(dx)\,dt = & \E_P\left[\int_0^T \partial_t \varphi(t,x(t))\,dt\right]
	\\
	= & \E_P\left[\int_0^T \frac{d}{dt}[\varphi(t,x(t))] - \nabla_x \varphi(t,x(t))\cdot \dot x(t)\,dt\right]
	\\
	= & \E_P\left[\int_0^T -\nabla_x\varphi(t,x(t))\cdot E_P[\dot x(t)|x(t)]\right]
	\\
	= & \int_0^T\int_{\R^d_x} - \nabla_x \varphi(t,x) \cdot V(x) \rho_t(dx)\, dt.
\end{aligned}
\]
Since $\varphi$ was arbitrary, this shows that $(\rho,V)\in \mathcal{CE}^p_T(\R^d_x)$, and the inequality
\begin{align*}
	E_T(\mu_0,\mu_T) \geq \min\left\lbrace \int_0^T(\rho_t,V_t)\,dt\right\rbrace .
\end{align*}

To show the other inequality,
\[
E_T(\mu_0,\mu_T) \leq \min\left\{ \int_0^T \Psi(\rho_t,V_t)\,dt\,:\,(\rho,V)\in \mathcal{CE}^p_T(\R^d_x), \rho_0 = \mu_0,\rho_T = \mu_T\right\},
\]
start with a solution $(\rho,V)\in \mathcal{CE}^p_T(\R^d_x)$, with $\rho_0 = \mu_0$ and $\rho_T = \mu_T$. By Smirnov's superposition principle, see e.g. \cite{ambrosio2013user}, there is a probability measure $P\in  \PM(PS_T^p)$ with $P_{t,\dot t} = f(\rho_t,V_t)$ for almost every $t\in(0,T)$, and $P_t = \rho_t$ for every $t\in[0,T]$ by H\"older-continuity. In particular, $P_0 = \mu_0, P_T = \mu_T$, and
\[
\int_0^T \Phi^\rel(P_{t,\dot t})\,dt = \int_0^T	\Psi(\rho_t,V_t)\,dt,
\] 
completing the proof.
\end{proof}

We conclude the article by specifying the Euler-Lagrange equation for the interacting particle optimal transport problem. It is known from classical optimal transport theory that the velocity field at all times can be determined through the viscosity solution of a Hamilton-Jacobi-Bellman equation, see e.g. \cite[Box 8.3]{santambrogio2015optimal}.

For interacting particles, we make use of Proposition \ref{prop:ELEquation}, which says that any measure optimizing the interaction action also optimizes the marginal noninteracting action, for which we know that the dynamic programming principle holds.

\begin{prop}\label{prop: HJB equation}
	Let $T>0$, $\mu_0,\mu_T\in \PM_2(\R^d_x)$. Let $\Phi(f) = \langle f,\psi \rangle + \frac12\langle f, U\ast f\rangle$ satisfy (C1) and (C2). Let $(\rho,V)\in \mathcal{CE}_T^2$ be a minimizer of $\int_0^T \Psi(\rho_t,V_t)\,dt$ subject to $\rho_0 = \mu_0,\rho_T = \mu_T$. Assume that $\rho,V\in C^2([0,T]\times \R^d_x)$ and that $\rho>0$ everywhere. Define the Lagrangian $L_t:= L[f(\rho_t,V_t)]\in C^2([0,T]\times \TM)$ as in Theorem \ref{thm:CauchyProblem} and its Legendre transform $L_t^*(x,p):=\sup_{v\in\R^d_v} p\cdot v - L_t(x,v)$. Then there exists a potential $\Xi\in C^2([0,T]\times \R^d_x)$ solving the Hamilton-Jacobi-Bellman equation
	\begin{equation}\label{eq: HJB}
		\begin{cases}
			V(t,x) = \nabla_p L_t^*(x,\nabla_x\Xi(t,x))\\
			\partial_t \Xi(t,x) + L_t^*(x,\nabla_x \Xi(t,x)) = 0.
		\end{cases}
	\end{equation}
\end{prop}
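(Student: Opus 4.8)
The plan is to read the Hamilton–Jacobi–Bellman system \eqref{eq: HJB} off as the Euler–Lagrange equations of the Eulerian problem \eqref{eq: Eulerian}, using that $(\rho,V)$ is smooth and $\rho>0$ so that all first variations are classical. First I would reduce to the unrelaxed energy: under the hypotheses the relaxation is trivial on the graph measures $f(\rho_t,V_t)$, so that $\Psi(\rho,V)=\Phi^\rel(f(\rho,V))=\Phi(f(\rho,V))=:J(\rho,V)$, where $J(\rho,V)=\dualbra{\rho}{\psi(\cdot,V(\cdot))}+\tfrac12\iint_{\M\times\M}U(x-x',V(x)-V(x'))\,\rho(dx)\,\rho(dx')$. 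By Lemma \ref{lem:CharactersitcSystemEstimates1}(i), applied with the time-dependent measure $f(\rho_t,V_t)$, the Lagrangian $L_t=L[f(\rho_t,V_t)]=\psi+U\ast f(\rho_t,V_t)$ is $C^2$ in $(t,x,v)$ and strictly $v$-convex, $\nabla_v\nabla_v L_t\ge c\,\mathrm{Id}$, so its Legendre transform $L_t^*(x,p)$ is $C^2$ in $(t,x,p)$, with the dual relations $\nabla_pL_t^*(x,\nabla_vL_t(x,v))=v$, $\nabla_xL_t^*(x,p)=-\nabla_xL_t(x,v)$ and $\partial_tL_t^*(x,p)=-\partial_tL_t(x,v)$ whenever $p=\nabla_vL_t(x,v)$.

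For the \emph{first} HJB equation I would perturb the velocity field along divergence-free directions. Set $p(t,x):=\nabla_vL_t(x,V_t(x))\in C^1$. Given $W\in C_c^\infty((0,T)\times\M;\R^d)$ with $\dive_x(\rho_t W_t)=0$ for all $t$, the pair $(\rho,V+\eps W)$ lies in $\mathcal{CE}^2_T$ with the \emph{same} $\rho$, hence the same boundary data, so by minimality $\tfrac{d}{d\eps}\big|_0\int_0^TJ(\rho_t,V_t+\eps W_t)\,dt=0$. Computing this first variation — the two off-diagonal interaction terms combining into one by the symmetry $U(x,v)=U(-x,-v)$ — yields $\int_0^T\int_\M p(t,x)\cdot W_t(x)\,\rho_t(dx)\,dt=0$. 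Since $\rho_t>0$ is smooth, the fields $\rho_tW_t$ exhaust the smooth compactly supported divergence-free fields, so by the Helmholtz/de Rham decomposition $p(t,\cdot)$ is curl-free for a.e., hence (by continuity of $p$) every $t$, and $\Xi(t,x):=\int_0^1p(t,\theta x)\cdot x\,d\theta$ satisfies $\nabla_x\Xi=p$ and is $C^2$ in $x$. The dual relation $\nabla_pL_t^*(x,p(t,x))=V_t(x)$ is exactly $V(t,x)=\nabla_pL_t^*(x,\nabla_x\Xi(t,x))$.

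For the \emph{second} equation I would use admissible perturbations $(\rho_t,V_t)\mapsto(\rho_t+\eps\sigma_t,V_t+\eps\omega_t)$ respecting the continuity equation, $\partial_t\sigma+\dive(V\sigma+\rho\omega)=0$, and the boundary data, $\sigma_0=\sigma_T=0$. Vanishing of the first variation gives $0=\int_0^T\big(\dualbra{\tfrac{\delta J}{\delta\rho}(\rho_t,V_t)}{\sigma_t}+\int_\M p(t,x)\cdot\omega_t(x)\,\rho_t(dx)\big)\,dt$, where a direct computation (again using the symmetry of $U$) gives $\tfrac{\delta J}{\delta\rho}(\rho_t,V_t)(x)=(\psi+U\ast f(\rho_t,V_t))(x,V_t(x))=L_t(x,V_t(x))$. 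Pairing the continuity-equation constraint with the function $\Xi$ from the previous step and integrating by parts (all boundary terms vanishing, in $t$ because $\sigma_0=\sigma_T=0$) replaces the $\omega$-term by $-\int\!\!\int(\partial_t\Xi+\nabla_x\Xi\cdot V)\,\sigma$; since $\sigma$ is arbitrary in the interior this forces $L_t(x,V_t(x))=\partial_t\Xi(t,x)+\nabla_x\Xi(t,x)\cdot V_t(x)$, and substituting the Legendre identity $L_t(x,V_t(x))=\nabla_x\Xi\cdot V_t-L_t^*(x,\nabla_x\Xi)$ (legitimate because $\nabla_x\Xi=p=\nabla_vL_t(\cdot,V_t)$) gives $\partial_t\Xi+L_t^*(x,\nabla_x\Xi)=0$, which also pins down the hitherto free additive function of $t$ in $\Xi$. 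The same equation bootstraps regularity: its right-hand side $-L_t^*(x,\nabla_x\Xi)$ is $C^1$, so $\partial_t\Xi\in C^1$, and with $\nabla_x\Xi=p\in C^1$ this yields $\Xi\in C^2([0,T]\times\M)$. I would cross-check this step against Proposition \ref{prop:ELEquation}: lifting $(\rho,V)$ to a path measure $P$ by Smirnov's superposition principle as in the proof of Theorem \ref{theorem: optimal transport}, $P$ is critical for the interacting action, hence supported on the curves $\dot\gamma_t=V_t(\gamma_t)$, which are the Euler–Lagrange characteristics of $L_t$, i.e. the characteristics of \eqref{eq: HJB}, whose generating function is $\Xi$; this is the ``dynamic programming for the marginal Lagrangian'' viewpoint.

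The main obstacle is making the two variational arguments rigorous at the level of measures: existence, measurability and regularity of the Lagrange multiplier $\Xi$, differentiation under the integral sign for the interaction term, the de Rham/Helmholtz step, and the admissibility of the perturbations. Logically prior to all of this is the reduction $\Phi^\rel=\Phi$ on the optimal graph measures $f(\rho_t,V_t)$ — this is where (C1)-(C2) really enter, through the strict $v$-convexity that makes $L_t^*$ a well-behaved Hamiltonian and that forces the optimal martingale kernel in the relaxation formula to be trivial; without it the system would have to be phrased with a relaxed Lagrangian rather than with $L_t=L[f(\rho_t,V_t)]$ (it is immediate in the standard Vlasov case $\psi=\tfrac12|v|^2$, $U=U(x)$, where $\Phi$ is convex and increasing in the $v$-convex order, so that $\Phi^\rel=\Phi^\inc=\Phi$ by Proposition \ref{prop:PropertiesRelaxation}). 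Once the formal Euler–Lagrange computations are justified, the $C^2$-regularity of $\Xi$ is essentially handed over by the assumed $C^2$-regularity of $(\rho,V)$ together with (C2), via the regularity of the Legendre transform guaranteed by (C1).
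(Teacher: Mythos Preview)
Your argument is correct and, for the first step (showing $p(t,x):=\nabla_vL_t(x,V_t(x))$ is a gradient via divergence-free perturbations), essentially identical to the paper's. For the second step you take a genuinely different route: you perturb $(\rho,V)$ jointly by $(\sigma,\omega)$ constrained by the linearized continuity equation, pair the constraint with $\Xi$, and read off $\partial_t\Xi+L_t^*(x,\nabla_x\Xi)=0$ directly from the vanishing first variation. The paper instead lifts to path space via Proposition~\ref{prop:ELEquation} (and Theorem~\ref{theorem: optimal transport}) to obtain the transport equation $\partial_t p+(V\cdot\nabla_x)p=\nabla_xL_t(x,V)$, then verifies by explicit PDE calculus that $\partial_t\nabla_x\Xi=-\nabla_x(L_t^*(x,\nabla_x\Xi))$, and only afterwards integrates in $x$ and adjusts by a function of $t$. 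Your approach is more self-contained at the Eulerian level and fixes the additive $t$-constant in one stroke; the paper's approach makes the link to the Lagrangian Euler--Lagrange system and the Euler equation for $V$ more transparent.

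One point worth noting: the reduction $\Psi=\Phi$ on graph measures that you flag as an obstacle is indeed not implied by (C1)--(C2) alone (those give only separate convexity of $\psi_2$, not joint convexity in $(v,v')$, cf.\ the paper's discussion after (C2)). The paper's own proof makes the same tacit identification when it differentiates $\Psi(\rho,V+\eps X/\rho)$ and when it invokes Proposition~\ref{prop:ELEquation} for a minimizer of the \emph{relaxed} functional, so your proposal is no less rigorous than the paper on this score; but it is the one place where both arguments are formal rather than fully justified.
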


While this result, known as the dynamic programming principle, is well-known for classical optimal transport, we will give a proof in our case. Note that $\Xi$ is called the value function in optimal control theory, and in classical optimal tranport there is the Kantorovich dual formulation, see e.g. \cite[Remark 6.2]{santambrogio2015optimal}, here this does not apply, since $L$ is only the marginal cost and there is no equality between the marginal cost $\int_0^T \int_{\R^d_x}L_t(x,V(x))\,\rho_t(dx)dt$ and the actual interaction action $\int_0^T \Psi(\rho_t,V_t)\,dt$, despite both sharing minimizers.

\begin{proof}
We first show that for every $t\in[0,T]$ there is a potential $\tilde \Xi(t,\cdot)\in C^1(\R^d_x)$ such that $V(t,x) = \nabla_p L_t^*(x,\nabla_x \tilde \Xi(t,x))$. By Legendre duality, this is equivalent to
\begin{equation}\label{eq: curlfree}
	\nabla_v L_t(x,V(t,x)) = \nabla_x \tilde\Xi(t,x).
\end{equation}

To see \eqref{eq: curlfree}, take a divergence-free test vector field $X\in C_c^\infty(\R^d_x;\R^d_v)$. Then $(\rho,V+\eps\frac{X}{\rho})\in \mathcal{CE}_T^2$ for all $\eps\in \R$, since $\rho>0$. By optimality
\begin{equation}
	 \begin{aligned}
		0 = & \frac{d}{d\eps}\bigl\vert_{\eps=0} \Psi\left(\rho,V+\eps\frac{X}{\rho}\right)
		\\
		= 	& \int_{\R^d_x}\nabla_v L_t(x,V(t,x)) \cdot X(x)\,dx.
	 \end{aligned}
\end{equation}

Since $\nabla_v L_t(x,V(t,x))$ is orthogonal to the divergence-free vector fields, it must be the gradient of some potential $\tilde \Xi(t,\cdot)\in C^2$. By subtracting a constant, we can assume that $\tilde \Xi(t,0) = 0$. We now check that $\tilde \Xi$ solves
\begin{equation}\label{eq: HJB gradient}
	\partial_t \nabla_x \tilde \Xi(t,x) = -\nabla_x (L_t^*(x,\nabla_x \tilde \Xi(t,x))).
\end{equation}

From \eqref{eq: HJB gradient} it follows that $\tilde \Xi$ itself solves $\partial_t \tilde\Xi(t,x) + L_t^*(x,\nabla_x \tilde \Xi(t,x)) = c(t)$ for some $c\in C^0([0,T])$. Defining $\Xi(t,x) := \tilde\Xi(t,x) - \int_0^t c(s)\,ds$ gives the solution to \eqref{eq: HJB}.

To show \eqref{eq: HJB gradient} itself, we define $p(t,x):= \nabla_v L_t(x,V(t,x)) = \nabla_x \tilde \Xi_t(x)$. Using Proposition \ref{prop:ELEquation} as well as similar arguments as in Theorem \ref{theorem: optimal transport} one can show that $p$ solves the inhomogeneous transport equation
\begin{align*}
	\partial_t(\rho \, p) + \dive( \rho \, p\otimes V) = \rho(t,x)\, \nabla_xL_t(x,V(t,x)).
\end{align*}
Here, we used the notation $ (p\otimes V)_{ij} = p_uiV_j $ and for a matrix-valued function $ M:\R^d\to \R^{d\times d} $, $ \dive(M)\in \R^d $ with $ \dive(M)_i = \sum_j \partial_{x_j}M_{ij} $. Due to the transport equation $ \partial_t\rho +\dive(\rho V)=0 $ and the assumption $ \rho>0$ we obtain
\begin{align}\label{eq: Equation for p}
	\partial_tp +(V\cdot \nabla_x) p = \nabla_xL_t(x,V(t,x))
\end{align}
with $ [(V\cdot \nabla_x) p]_i  = \sum_j V_j\partial_{x_j} p_i $.

Now, the left-hand side of \eqref{eq: HJB gradient} writes
\begin{equation}\label{eq: HJB gradient left}
	\partial_t \nabla_x \tilde \Xi(t,x) = \partial_t p(t,x) = \nabla_x L_t(x,V(t,x)) - (V\cdot \nabla_x) p(t,x).
\end{equation}

Meanwhile, the right-hand side of \eqref{eq: HJB gradient} is
\begin{align}\label{eq: HJB gradient right}
\begin{split}
	\nabla_x(L_t^*(x,\nabla_x \tilde \Xi(t,x))) = &\nabla_x(V(t,x)\cdot p(t,x) - L_t(x,V(t,x)))
	 \\
	= & \nabla_x V(t,x) \, p(t,x) + \nabla_xp(t,x) \, V(t,x) 
	\\
	&\quad - \nabla_x L_t(x,V(t,x)) - \nabla_x V(t,x) \, \nabla_v L_t(x,V(t,x)) 
	\\
	= &  \nabla_xp(t,x) \, V(t,x) - \nabla_x L_t(x,V(t,x)),
\end{split}	
\end{align}
since the first and last term cancel each other. Here, we used the notation for the gradient of the vector field $ (\nabla_x V)_{ij} = \partial_{x_j}V_i $. Observe that by definition of $ p $ we have 
\begin{align*}
	\nabla_xp(t,x) \, V(t,x) = D^2_x\tilde{\Xi}(t,x) \, V(t,x)  = (V\cdot \nabla_x) p(t,x).
\end{align*}
Thus, combining \eqref{eq: HJB gradient left} and \eqref{eq: HJB gradient right} yields \eqref{eq: HJB gradient}.
\end{proof}

Further investigation into the nonsmooth situation might lead to additional insight into viscosity solutions of the Hamilton-Jacobi-Bellman equation \eqref{eq: HJB}. We finally note that taking the time derivative of the first equation in \eqref{eq: HJB} yields the compressible Euler equation for $V$. More precisely, we take the time-derivative of $ \nabla_x\Xi(t,x) = \nabla_vL_t(x,V(t,x)) $ yielding
\begin{align*}
	D^2_vL_t(x,V(t,x)) \partial_t V(t,x) + \nabla_v\partial_tL_t(x,V(t,x)) = \partial_t \nabla_x\Xi(t,x).
\end{align*}
For the right hand side we use \eqref{eq: Equation for p} with $ p = \nabla_x\tilde{\Xi} = \nabla_x \Xi=\nabla_vL_t(x,V(t,x))  $ to get
\begin{align*}
	\partial_t\nabla_x \Xi = -(V\cdot \nabla_x) [\nabla_vL_t(x,V(t,x))] + \nabla_xL_t(x,V(t,x)).
\end{align*}
We hence obtain
\begin{align*}
	\partial_t V(t,x) +  (V \cdot \nabla_x ) V(t,x) &= A[f(\rho_t,V_t)](x,V(t,x))
	\\
	A[f(\rho_t,V_t)](x,V(t,x)) &= D^2_vL_t(x,V(t,x))^{-1}\big[  \nabla_xL_t(x,V(t,x)) 
	\\
	&\quad - \nabla_x\nabla_vL_t(x,V(t,x))\, V(t,x) - \nabla_v\partial_tL_t(x,V(t,x)) \big].
\end{align*}
Here, $A[f]:\TM \to \R^d_a$ is the acceleration as it appears in Theorem \ref{thm:CauchyProblem}. However, not all solutions to the Euler equation stem from minimizers of the interacting particle optimal transport problem. Namely, only those solutions $(\rho,V)\in\mathcal{CE}_T^2$ where $p(t,x) = \nabla_v L_t(x,V(t,x))$ is curl-free, i.e. a gradient.

\subsection*{Acknowledgment} The authors gratefully acknowledge the support by the Deutsche Forschungsgemeinschaft (DFG) through the Collaborative Research Center "The mathematics of emerging effects" (CRC 1060, Project-ID 211504053). B. Kepka is funded by the Bonn International Graduate School of Mathematics at the Hausdorff Center for Mathematics (EXC 2047/1, Project-ID 390685813).

%
%

\bibliographystyle{plain}
\bibliography{References}

\end{document}